\newtheorem{thm}{Theorem}[section]
\newtheorem{lem}[thm]{Lemma}
\newtheorem{cor}[thm]{Corollary}
\newtheorem{prop}[thm]{Proposition}
\theoremstyle{definition}
\theoremstyle{remark}
\newcommand{\R}{\mathbb{R}}
\newcommand{\N}{\mathbb{N}}
\newcommand{\SL}{\operatorname{SL}}
\newcommand{\GL}{\operatorname{GL}}
\newcommand{\Z}{\mathbb{Z}}
\newcommand{\dd}{\; \mathrm{d}}
\newcommand{\Ad}{\operatorname{Ad}}
\numberwithin{equation}{section}
\newcommand{\Rmnum}[1]{\expandafter\@slowromancap\romannumeral #1@}
\begin{document}

\title{Pointwise equidistribution  for    one parameter diagonalizable group action on homogeneous space}

\author{Ronggang Shi}
\address{School of Mathematical Sciences, Tel Aviv University, Tel Aviv 69978, Israel 
	}
\curraddr{Shanghai Center for Mathematical Sciences, Fudan 
 University, Shanghai 200433, PR China}
 \email{ronggang@fudan.edu.cn}
\thanks{The author is supported by 
 ERC starter grant DLGAPS 279893.}


\subjclass[2000]{Primary   28A33; Secondary 37C85, 37A30.}

\date{}


\keywords{homogeneous dynamics,  equidistribution, ergodic theorem}

\begin{abstract}
\noindent
Let $\Gamma$ be a lattice of a semisimple Lie group $L$. 
 Suppose that one parameter  $\Ad$-diagonalizable subgroup  
$\{g_t\}$ of  $L$ acts ergodically on $L/\Gamma$ 
with respect to the probability Haar measure $\mu$.
For certain proper subgroup $U$ of 
the unstable horospherical subgroup  of $\{g_t\}$ and certain $x\in L/\Gamma$
we show that for almost every  $u\in U$   the trajectory
$\{g_tux: 0\le t\le T\}$ is equidistributed   with respect to $\mu$ as $T\to \infty$.
\end{abstract}

\maketitle

\markright{}

\section{introduction}\label{sec;intro}

Let $(X, \mathcal B,\mu, T)$ be a probability measure preserving system, i.e. 
$\mu$ is a probability measure on the measurable space $(X, \mathcal B)$ and
the measurable map
$T:X\to X$ preserves $\mu$. 
The Birkhoff  ergodic theorem 
says that 
if $T$ is ergodic then  given $f\in L^1_\mu(X)$ 
\begin{equation}\label{eq;birkhoff}
\lim_{N\to \infty}\frac{1}{N} \sum_{n=0}^{N-1}f(T^nx)=\int_Xf\dd \mu
\end{equation}
for  almost every $x\in X$. 

Suppose that   $X$ is a locally compact second countable  
 Hausdorff 
 topological space and $\mathcal B$ is the Borel sigma algebra
of $X$. 
Given $x\in X$ the condition 
 that   (\ref{eq;birkhoff}) holds for  every  $f$ belonging to   the set $C_c(X)$ of 
   continuous functions with compact support
is equivalent to 
\begin{equation}\label{eq;question}
\lim_{N\to \infty}\frac{1}{N} \sum_{n=0}^{N-1}\delta_{T^nx}=\mu
\end{equation}
in the space of finite measures on $X$ under the weak$^*$ topology. 
Here $\delta_y$ denotes the Dirac measure supported on $y\in X$.
A Radon measure $\nu$ on $X$ is said to be $(T,\mu)$ generic if   (\ref{eq;question}) holds
  for $\nu$ almost every $x\in X$. 
 A natural question is whether a measure $\nu$ (usually singular to $\mu$) is $(T,\mu)$ generic.
  
This question is studied by several authors for natural dynamical systems on $X=\mathbb R/\mathbb Z$. 
Let $m,n$ be coprime positive integers greater than or equal to $2$.
Suppose that     $\mu_X$ is the Lebesgue measure on $X$ and  $T_n=\times n $ modulo $\mathbb Z$.
Host \cite{h95} shows that
any  $T_m$ invariant and ergodic probability measure $\nu$ on
$X$ with positive entropy
  is  $(T_n, \mu_X)$ generic. This result  is strengthened by 
 Hochman and  Shmerkin \cite{hs} where they prove that for any $C^2$ diffeomorphism  $\varphi: \mathbb R\to \mathbb R$, the push forward of $\nu$ modulo $\mathbb Z$
is $(T_n, \mu_X)$ generic. The reader can find detailed references of related results in \cite{hs}.

The aim of this paper is to address this question for one parameter $\Ad$-diagonalizable   flows in homogeneous spaces. 
Let $\Gamma$ be a  lattice
of a   Lie group  $L$.
Every subgroup $H$ of $L$
 acts on $L/\Gamma$ by left translations and this action preserves the probability Haar measure $\mu_{L/\Gamma}$. 
 We use $(H, L/\Gamma)$ to denote this measure preserving system. 
 There are two basic types of one parameter subgroups $t\to g_t\in L$ in terms of its image under the adjoint 
representation $\Ad: L\to \GL(\mathfrak l)$ where $\mathfrak l$ is the Lie algebra of $L$.
If  $\Ad(g_t)$   is unipotent, then according to  Ratner's uniform distribution theorem \cite{r912}  
    the Dirac measure $\delta_x$ of any point $x\in L/\Gamma$ is generic 
with respect to some $\{g_t:t\in \mathbb R\}$ ergodic homogeneous probability measure.
If  the 
one parameter subgroup is
 $\Ad$-diagonalizable, i.e.~$\Ad(g_t)$
 is diagonalizable over $\mathbb R$, the unstable horospherical subgroup
 of $g_1$  is defined as 
\[
L^+=\{h\in  L: g_t^{-1}hg_t\to 1_L \mbox{ as } t\to \infty\}
\]
where 
  $1_L$ is the   identity element of the group $L$.
A   variant of Birkhoff ergodic theorem says that 
if  
   $(\{g_t :t\in \mathbb R\}, L/\Gamma)$ is ergodic then  
 given any $x\in L/\Gamma$ and any    $f\in C_c(L/\Gamma)$
\begin{equation}\label{eq;generic}
\lim_{T\to \infty}\frac{1}{T}\int_0^T f(g_tux)\dd t=\int_{L/\Gamma} f \dd \mu_{L/\Gamma}
\end{equation}
holds for almost every  $u\in L^+$ with respect to the Haar measure of $L^+$.
Suppose that $\mu$  is a  $\{g_t: t\in\mathbb R\}$ invariant  probability measure  on $L/\Gamma$.
We say that a   Radon measure  $\nu$ on a subgroup $U$ of $L$
  is  $(g_t, \mu)$ generic  at $x\in L/\Gamma$
if for any  $f\in C_c(L/
\Gamma)$ and   $\nu$ almost every $u\in U $ we have  (\ref{eq;generic}) holds.
We remark here that the property of being $(g_t, \mu)$ generic only depends on the equivalence class 
of the measure  $\nu$.

Unlike one parameter $\Ad$-unipotent subgroups few results are known about $\Ad$-diagonalizable subgroups when
 $\nu$ on $L^+$  is singular to the Haar measure. 
 We do know many examples of  probability measures $\nu$ whose pushforward  image 
 under $g_t$ as $t\to \infty$ or  trajectory under $\{g_t:0\le t\le T\}$ as $T\to\infty$
  is 
 equidistributed with respect to some  homogeneous probability measure.
The   reader can find precise descriptions of these  measures  for asymptotic results in Shah \cite{s96}\cite{s09}\cite{s092}\cite{s093}\cite{s10}, 
Shah and Weiss \cite{sw96}; and for average results by the  author and  Weiss in  
\cite{shi12}\cite{shi122}\cite{sw17}.

In this paper we  
 investigate pointwise equidistribution for measures studied in \cite{s96} and \cite{sw96} above. 
Let $G\le L$ be a connected semisimple Lie group  without compact factors.  Ratner's theorem \cite{r912} implies that 
for any $x\in L/\Gamma$ 
the orbit closure $\overline {Gx}$ is a finite volume homogeneous space, i.e. $\overline {Gx}=Hx$
 where $H=\{g\in L: g\overline {Gx}=\overline {Gx}\}$ and there is a unique $H$ invariant 
 probability measure (denoted by $\mu_{\overline{Gx}}$) supported on $\overline {Gx}$.
 Now we state the  
 main result of this paper. 
\begin{thm}\label{thm;1}
Let  $\{g_t :t\in \mathbb R\}$ be an  $\Ad$-diagonalizable 
one parameter subgroup of a connected semisimple Lie group  $G$. Let $G^+\le G$ be the unstable horospherical subgroup of $g_1$.
Suppose  that the projection of $g_1 $  to each simple factor of $G$ is not the identity element. 
Let $\Gamma$ be a lattice of a Lie group  $L$
which contains $G$. Then for every $x\in L/\Gamma$  the  Haar measure of $G^+$ is 
 $(g_t, \mu_{\overline {Gx}})$ generic at $x$.
\end{thm}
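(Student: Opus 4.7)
The plan is to combine Ratner's orbit-closure theorem and the standard Birkhoff ergodic theorem on $L/\Gamma$ with a bootstrap argument that upgrades averaged equidistribution for $U_G^+$-orbits to pointwise equidistribution.

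First I would perform two standard reductions. By Ratner's theorem $\overline{Gx}=Hx$ is a finite-volume homogeneous space; replacing $L$ by $H$ (and $\Gamma$ by the appropriate intersection) we may assume $\overline{Gx}=L/\Gamma$ and $\mu_{\overline{Gx}}=\mu_{L/\Gamma}$ is $L$- and in particular $G$-invariant. Moore's theorem then gives ergodicity of the $G$-action on $(L/\Gamma, \mu_{L/\Gamma})$, and by Mautner's phenomenon, together with the assumption that the projection of $\{g_t\}$ to each simple factor of $G$ is nontrivial, the $\{g_t\}$-action is also ergodic. Fix $f \in C_c(L/\Gamma)$ and let $E_f$ denote the conull set of Birkhoff-good points; this set is $\{g_t\}$-invariant by construction, and it is $U_G^-$-invariant because $g_t v g_t^{-1}\to \mathbf e$ uniformly for $v\in U_G^-$, so the time averages along $vy$ and along $y$ agree in the limit.

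With these inputs, the special case $U_G^+ = U_L^+$ is immediate from the $U_L^+$-variant of Birkhoff quoted in the excerpt. In the general case $U_G^+ \subsetneq U_L^+$ the main ingredient should be the averaged equidistribution of Shah \cite{s96} and Shah-Weiss \cite{sw96}, which asserts that for each bounded neighborhood $B\subseteq U_G^+$ of $\mathbf e$ and every $x$,
\[
\frac{1}{\mathrm{Haar}(B)}\int_B f(g_s u x)\,du \longrightarrow \int_{L/\Gamma} f\,d\mu \qquad (s\to\infty).
\]
To upgrade this to a pointwise statement for the time averages, the approach would be to estimate the variance
\[
\int_B\Big|\frac{1}{T}\int_0^T f(g_t u x)\,dt - \int f\,d\mu\Big|^2\,du,
\]
expanding it into a double integral in $(t,t')\in[0,T]^2$ and splitting into a diagonal ($|t-t'|$ bounded) and an off-diagonal ($|t-t'|$ large) part; the diagonal part vanishes in the limit by Shah's averaged convergence, while the off-diagonal part is controlled by quantitative mixing of $\{g_t\}$ on $L/\Gamma$ (Howe--Moore, which applies under the hypothesis on the projection of $g_t$). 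Summing along a geometric subsequence $T_k = \alpha^k$ and applying Borel--Cantelli yields pointwise convergence for $\mathrm{Haar}_{U_G^+}$-a.e.\ $u$ along $T_k$, and a standard monotonicity-and-interpolation argument fills in the gaps to obtain convergence for all $T\to\infty$.

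The main obstacle is the variance estimate and its uniformity in $x$: one must simultaneously control the off-diagonal double integral via effective mixing (which in turn depends on the spectral properties of $\{g_t\}$ on all simple factors of $G$, where the hypothesis on the projection enters essentially) and prevent $\{g_tux\}$ from escaping into the cusp, which requires non-divergence estimates of Dani--Margulis--Kleinbock--Margulis type for the $U_G^+$-translates of $x$. Establishing summable decay of the variance along a geometric subsequence, so that Borel--Cantelli extracts the desired full-measure set of $u$, is where I expect the technical heart of the argument to lie.
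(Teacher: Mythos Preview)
Your variance approach has a genuine gap in the off-diagonal estimate. Expanding the second moment produces terms
\[
\int_B \tilde f(g_t u x)\,\tilde f(g_{t+s} u x)\,du,\qquad \tilde f:=f-\textstyle\int f\,d\mu,
\]
and you propose to bound these via Howe--Moore mixing of $\{g_t\}$ on $(L/\Gamma,\mu)$. But mixing controls $\int_{L/\Gamma}\tilde f\cdot(\tilde f\circ g_s)\,d\mu$; the integral above is taken against the pushforward of Haar measure on $B$ under $u\mapsto g_t u x$, a measure singular to $\mu$ (since $U_G^+\subsetneq U_L^+$ in the case of interest, the Margulis thickening trick does not reduce this to a $\mu$-integral). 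Turning one into the other is precisely an \emph{effective} version of the Shah--Weiss equidistribution $g_t\mu_x\to\mu$, with a rate. No such rate is available in this generality: the results of \cite{s96} and \cite{sw96} are purely qualitative, and effective analogues require substantial arithmetic input and further hypotheses on $L,\Gamma$. Without a rate the off-diagonal contribution need not decay at all, so Borel--Cantelli cannot be invoked. (A smaller point: Howe--Moore gives only qualitative mixing; a polynomial or exponential rate is an additional assumption.)

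The paper takes a route that needs only the qualitative Shah--Weiss theorem. It first proves Theorem~\ref{thm;2} (the case of an abelian $g_1$-expanding $U$) by showing directly that every weak-$*$ limit of $\frac{1}{T}\int_0^T g_tu(w)\delta_x\,dt$ is a $U$-invariant probability measure charging no proper $G$-ergodic homogeneous subspace; the ingredients are a geometric correlation bound for $\psi(g_tu(w)x)-\psi(u(s\mathbf e_i)g_tu(w)x)$ (whose decay comes from the contraction structure, not from mixing), quantitative recurrence via Benoist--Quint height functions, and Ratner's classification. Theorem~\ref{thm;1} is then deduced by locating inside $U_G^+$ an \emph{abelian} $g_1$-expanding subgroup $U_a$ of a semisimple $G_1\ni\{g_t\}$ (Lemma~\ref{lem;abelian}, via strongly orthogonal roots). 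Theorem~\ref{thm;2} applied at each basepoint together with Fubini along the $U_a$-fibres yields \emph{existence} of the pointwise limit for Haar-a.e.\ $u\in U_G^+$, after which the qualitative Theorem~\ref{thm;old} identifies that limit as $\mu_{\overline{Gx}}$. The essential contrast with your plan is that existence is secured first by structural arguments, and then averaged equidistribution with no rate suffices to name the limit; your plan attempts to extract existence from the averaged statement itself, and that is where the missing effectivity is fatal.
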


Our result is new in the following simple case: $G=
\left (\begin{array}{cc}
\SL_2(\mathbb R) & 0 \\
0 & 1
\end{array}
\right )$, $g_t=\mbox{diag}(e^t, e^{-t},1), 
L=\SL_3(\mathbb R), \Gamma=\SL_3(\mathbb Z)$.
The key property we use for the group $G^+$ is the $ g_1 $   expanding property which 
we describe now. 
Let $\{g_t :t\in \mathbb R\}$ and $G$ be as in Theorem \ref{thm;1}.
Every representation\footnote{In this paper a representation of $G$ means a 
	continuous map $\rho: G\to \GL(V)$ where $V$ is a
	nonzero finite dimensional  real  vector space. For $g\in G$ and $v\in V$, the
	linear action $\rho(g)v$ sometimes is denoted by $gv$ for simplicity.} $\rho$ of $G$ on a finite dimensional real  vector space $V$ splits
into a direct sum $V^+\oplus V^0\oplus V^-$  of $\rho(g_1)$ invariant subspaces so that  the 
restrictions of $\rho(g_1)$ to the spaces 
 $V^+, V^0, V^-$ have eigenvalues  $>,=,<1$ respectively. Let   $\pi_+$  be the
 $\rho(g_t)$ equivariant  projection from $V$ to $V^+$. 
A connected subgroup $U$ of $G$ normalized by $g_t\ (t\in \R)$ is said to be  $ g_1 $ expanding if
 for every nontrivial
irreducible  
representation
 $\rho$ of $G$ on $V$ and every nonzero vector $v\in V$ one has that the map 
\[
 U\to V \quad \mbox{given by}\quad  u\to  \pi_+(\rho(u)v)
\]
is not identically zero. 
It can be proved that    $U$ is $g_1$ expanding if and only if 
$U\cap G^+$ is $g_1$ expanding (see Lemma \ref{lem;characterize+}).
The existence of a $g_1$ expanding subgroup in the semisimple Lie group $G$ implies that $G$ has no compact factors, since otherwise $G$ has an irreducible representation 
$V=V^0$.  

One family of $ g_1  $ expanding subgroups comes from epimorphic subgroups of algebraic groups
introduced by  Bien and Borel \cite{bb}.
Suppose that $G$ is the connected component of real points of some
 semisimple linear algebraic group defined over $\mathbb R$.
Let  $S\le G$ be a one dimensional $\mathbb R$ split algebraic torus and let $U$ be a unipotent algebraic 
subgroup of $G$ normalized by $S$.  
Let   $H$  be the subgroup  generated by $S$  and $U$.
The group $H$ is  epimorphic in $G$ if 
any $H$ fixed  vector of 
 an   algebraic  representation of $G$
   is also  fixed by $G$. 
 It is proved in  \cite[Proposition 2.2]{sw96}  that if $H$ is an epimorphic subgroup of $G$ then $U$ is  $ g_1 $ expanding for some choice of the parameterization $\{g_t \}$ of the connected component of $S$.

 Under an additional abelian assumption for the  $g_1$ expanding  subgroup  $U$ we  prove the following refinement of Theorem \ref{thm;1}.
\begin{thm}\label{thm;2}
Let  $\{g_t :t\in \mathbb R\}$ be an  $\Ad$-diagonalizable
one parameter subgroup of a connected semisimple Lie group  $G$.
Let $\Gamma$ be a lattice of a Lie group  $L$ which
contains $G$.  
Suppose that  $U\le G^+$ is a connected  $ g_1 $ expanding abelian  subgroup of $G$.
Then for every  $x\in L/\Gamma$   the  Haar measure of $U$ is 
 $(g_t, \mu_{\overline {Gx}})$ generic at $x$.
\end{thm}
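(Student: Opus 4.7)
The plan is a three-stage reduction: from an average equidistribution of translates, to $L^2$-convergence of Birkhoff-type averages, to pointwise convergence. Since the conclusion is translation-invariant in $U$ and $U$ is $\sigma$-compact, it suffices to fix an absolutely continuous probability measure $\lambda$ on $U$ with smooth, compactly supported density and, for each $f$ in a countable dense subset of $C_c(L/\Gamma)$, to prove that
\[
A_Tf(u):=\frac{1}{T}\int_0^T f(g_tux)\,dt \longrightarrow \int f\,d\mu_{\overline{Gx}}
\]
for $\lambda$-a.e.\ $u\in U$. The first step is to establish the weak-$*$ convergence $(g_t)_*(\lambda\ast\delta_x)\to\mu_{\overline{Gx}}$ as $t\to\infty$. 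Non-divergence of unipotent-type flows (Dani--Margulis style) ensures that the family of translates is tight; let $\nu$ be any weak-$*$ limit. Because $U\le U_G^+$ gives $g_t^{-1}u_0g_t\to\mathbf{e}$ for every $u_0\in U$, a change of variables inside $U$ --- here exploiting that $U$ is abelian to eliminate commutator corrections --- combined with uniform continuity of the density of $\lambda$ forces $(u_0)_*\nu=\nu$. Hence $\nu$ is invariant under the closed subgroup $H_0$ generated by $U$ and $\{g_t\}$; the $g_1$-expanding hypothesis, in the spirit of Proposition~2.2 of \cite{sw96}, combined with Ratner's measure classification, identifies $\nu$ with $\mu_{\overline{Gx}}$.

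The second step promotes this to $L^2(\lambda)$-convergence of $A_Tf$ to $\mu_{\overline{Gx}}(f)$. Writing $F_r:=f\cdot(f\circ g_r)$ and using $g_tux=g_{t-s}(g_sux)$ for $t\ge s$ together with the symmetry in $(s,t)$,
\[
\|A_Tf\|_{L^2(\lambda)}^{\,2} = \frac{2}{T^2}\int_0^T\!\!\int_0^t \bigl((g_s)_*(\lambda\ast\delta_x)\bigr)(F_{t-s})\,ds\,dt.
\]
Applying the first step to the continuous function $F_r$ yields that the inner pairing converges to $\mu_{\overline{Gx}}(F_r)$ as $s\to\infty$; Howe--Moore mixing of $\{g_t\}$ on $(\overline{Gx},\mu_{\overline{Gx}})$ --- valid because $g_1$-expandingness prevents $g_t$ from projecting trivially to any simple factor of the stabilizer of $\overline{Gx}$ --- sends $\mu_{\overline{Gx}}(F_r)\to\mu_{\overline{Gx}}(f)^{2}$ as $r\to\infty$. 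A routine bulk-versus-boundary estimate on the triangle $\{0\le s\le t\le T\}$ then gives $\|A_Tf\|_{L^2(\lambda)}^2\to\mu_{\overline{Gx}}(f)^2$, and together with $\int A_Tf\,d\lambda\to\mu_{\overline{Gx}}(f)$ (from the first step) this yields $L^2$-convergence.

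The third step upgrades $L^2$- to $\lambda$-a.e.\ convergence. Extract a subsequence $T_n$ along which $\sum_n\|A_{T_n}f-\mu_{\overline{Gx}}(f)\|_{L^2(\lambda)}^2<\infty$; by Borel--Cantelli, $A_{T_n}f\to\mu_{\overline{Gx}}(f)$ $\lambda$-a.e. The elementary oscillation bound
\[
|A_Tf(u)-A_{T'}f(u)|\le \frac{2\|f\|_\infty |T-T'|}{\max(T,T')},
\]
applied along a geometric choice $T_n=(1+\varepsilon)^n$ followed by $\varepsilon\to 0$, interpolates the subsequential limit to \emph{every} $T\to\infty$ at $\lambda$-a.e.\ $u$. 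Since $\lambda$ may be taken absolutely continuous with arbitrary smooth density on any compact subset of $U$ and $f$ ranges over a countable dense subset of $C_c(L/\Gamma)$, the theorem follows.

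The principal obstacle is the first step, specifically the identification of the weak-$*$ limit $\nu$ as $\mu_{\overline{Gx}}$ via Ratner's classification together with the $g_1$-expanding hypothesis; this is where the serious algebraic and dynamical input enters. The second and third steps are essentially soft, driven by mixing and by the amenable, abelian structure of $U$.
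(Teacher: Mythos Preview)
Your three-stage plan is appealing, but Step~3 has a genuine gap that Steps~1 and~2 do not supply enough to fill. You extract a subsequence $T_n$ with $\sum_n\|A_{T_n}f-\mu_{\overline{Gx}}(f)\|_{L^2}^2<\infty$ for Borel--Cantelli, and then in the next sentence take $T_n=(1+\varepsilon)^n$ for the interpolation. These requirements conflict: mere $L^2$-convergence (no rate) lets you extract \emph{some} summable subsequence, but there is no reason it can be chosen geometric; along a prescribed geometric sequence you have no summability without a quantitative rate for $\|A_Tf-c\|_{L^2}$. Your Steps~1 and~2 are purely qualitative---Shah--Weiss equidistribution and Howe--Moore mixing yield convergence but, as invoked, no rate---so the gap is real. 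The ``routine bulk-versus-boundary estimate'' in Step~2 is also not routine: the convergence $((g_s)_*(\lambda*\delta_x))(F_r)\to\mu_{\overline{Gx}}(F_r)$ is pointwise in $r$ but not uniform, since $\{F_r\}_{r\ge 0}$ is not equicontinuous, and pointwise convergence alone does not control the triangular average (take, e.g., $\phi(s,r)=\mathbf{1}_{\{s\le r\}}$ as an abstract counterexample).

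The paper's argument is organized differently precisely to manufacture the quantitative control your soft approach lacks. It works pointwise from the start: for a.e.\ $w$, every weak-$*$ limit of $\frac{1}{T}\int_0^T\delta_{g_tu(w)x}\,dt$ is shown to be (i) $U$-invariant, via an explicit exponential correlation-decay estimate; (ii) a probability measure, via height functions satisfying a contraction inequality $\int_{I^m}\alpha(g_tu(w)y)\,dw\le a\,\alpha(y)+b$; and (iii) null on $C_L(G)Y$ for every proper $G$-ergodic homogeneous $Y$, via a second height function. Crucially, (ii) and (iii) come with exponential-in-$T$ bounds on the measure of the bad set of $w$'s, so Borel--Cantelli applies along the integers and the oscillation bound then interpolates. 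The abelian hypothesis on $U$ is used in a shadowing lemma linking the flow to an iterated random walk, which feeds the contraction inequality; this machinery is the quantitative substitute for the rates your argument does not supply.
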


Here we give some  examples which are  motivations of  Theorem \ref{thm;2}. Let $m,n$
be two positive integers and let $\mathbf v=(a_1, \ldots, a_m, -b_1, \ldots,- b_n)$
where $a_i, b_j>0$ and $a_1+\cdots+ a_m=b_1+\cdots+b_n$. 
For every $\xi\in M_{mn}$ where  $ M_{mn}$ is the set   of $m\times n$
matrices, we let 
$u(\xi)=\left(
\begin{array}{cc}
I_m  & \xi\\
0    & I_n
\end{array}
\right)$
where $I_m$ and $I_n$ are identity matrices of order $m$ and $n$ respectively.
We consider the one parameter   diagonal subgroup  given by  
\[g_{t\mathbf v}=\mathrm{diag}(e^{a_1t}, \ldots, e^{a_mt}, e^{-b_1t},\ldots, e^{-b_nt}) \in   \SL_{m+n}(\R).
\] 
It follows from Kleinbock and Weiss \cite[Proposition 2.4]{kw08}  that the group
$U=\{u(\xi): \xi\in M_{mn}\}$ is $g_{\mathbf v}$ expanding. Therefore as a special case 
of Theorem \ref{thm;2} we have the following corollary.
\begin{cor}\label{cor;special}
Let $\Gamma$ be a lattice of $G=\SL_{m+n}( \mathbb R)$ and let $\mu$
be the probability Haar measure on $G/\Gamma$. Then for every $x\in G/\Gamma$ the 
additive Haar measure of $U=\{u(\xi): \xi\in M_{mn}\}$ is $(g_{t\mathbf v}, \mu)$
generic at $x$.
\end{cor}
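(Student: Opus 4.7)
The plan is to derive this corollary by a direct application of Theorem \ref{thm;2}, taking $L=G=SL(m+n,\mathbb R)$. In that setup $L=G$, so for every $x\in G/\Gamma$ the orbit closure $\overline{Gx}$ is all of $G/\Gamma$ and $\mu_{\overline{Gx}}=\mu$. The ambient group $G=SL(m+n,\mathbb R)$ is connected semisimple without compact factors, and $\Gamma$ is a lattice in $G=L$ by hypothesis, so those requirements of Theorem \ref{thm;2} hold automatically. The one-parameter subgroup $\{g_{t\mathbf v}\}$ is already diagonal, hence Ad-diagonalizable, and lies in $G$ because the condition $a_1+\cdots+a_m=b_1+\cdots+b_n$ forces $\det g_{t\mathbf v}=1$.

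It then remains to verify that $U=\{u(\xi):\xi\in M_{mn}\}$ satisfies the hypotheses imposed on the group $U$ in Theorem \ref{thm;2}. Connectedness and abelianness are immediate from the identity $u(\xi)u(\eta)=u(\xi+\eta)$ obtained by block multiplication, and from the fact that $U$ is parametrized smoothly by the vector space $M_{mn}$. To see $U\le U_G^+$, I would conjugate: $g_{t\mathbf v}^{-1}u(\xi)g_{t\mathbf v}$ has upper-right block whose $(i,j)$ entry equals $e^{-(a_i+b_j)t}\xi_{ij}$, which tends to $0$ as $t\to\infty$ since each $a_i,b_j>0$. (In fact $U$ coincides with $U_G^+$, although Theorem \ref{thm;2} only requires the inclusion.) Finally, that $U$ is $g_1$-expanding is precisely Proposition 2.4 of Kleinbock--Weiss \cite{kw08}, which is explicitly cited in the paragraph preceding the corollary.

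With every hypothesis of Theorem \ref{thm;2} verified, the conclusion---that the additive Haar measure on $U$ is $(g_{t\mathbf v},\mu)$ generic at $x$---follows directly. There is no real obstacle here: the corollary is a routine packaging of the main theorem for the concrete diagonal flow $g_{t\mathbf v}$ and horospherical subgroup $U$, and all of the substantive work lies inside Theorem \ref{thm;2} itself.
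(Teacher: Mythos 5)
Your proposal is correct and is essentially identical to the paper's own derivation: the paper obtains the corollary as an immediate special case of Theorem \ref{thm;2} with $L=G=SL(m+n,\mathbb R)$ (so $\overline{Gx}=G/\Gamma$ and $\mu_{\overline{Gx}}=\mu$), citing Kleinbock--Weiss \cite{kw08} Proposition 2.4 for the $g_{\mathbf v}$ expanding property of $U$, exactly as you do. Your explicit verification of the remaining hypotheses (connectedness, abelianness, $U\le U_G^+$, $\det g_{t\mathbf v}=1$) is just a fuller write-up of the same routine check.
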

Remark:
After this paper, in a joint work with Kleinbock and Weiss \cite{ksw} we  found a new proof of Corollary \ref{cor;special} while $\Gamma=\SL_{m+n}(\Z)$.  For compactly supported smooth functions, we also get a
	convergence  rate  of (\ref{eq;generic}). Using the same method, the author \cite{snew} proves  pointwise equidistribution with an error rate for 
	general $G$ and $\Gamma$ in the setting similar to  Corollary \ref{cor;special}. The method in \cite{ksw} and \cite{snew} does not apply to more general cases such as the example after 
	 Theorem \ref{thm;1}.

The abelian assumption of Theorem \ref{thm;2} for the group $U$ might be superfluous. 
The only place where we essentially need  it is the shadowing Lemma \ref{lem;shade} and its variant
in \S \ref{sec;five} which 
are links between random walks and flows. 
We do not know how to get shadowing lemma and  simultaneously the  contraction property
Lemma \ref{lem;estimate} 
 even in the case where $U$ is the two step  Heisenberg group and $g_t=g_{t\mathbf v}$ for $\mathbf v=(2,1, -3)$. 
 This is also the main obstruction that  we cannot apply our method 
 to  the case of volume measures of  curves studied in \cite{s09}\cite{s092}\cite{s093}\cite{s10}, 
e.g.~nonplanar analytic  curves
 in $G^+$ where $G=\SL(n, \mathbb R)$ and $g_t=\mbox{diag}(e^{(n-1)t}, e^{-t},\ldots, e^{-t})$. In a joint work with Fraczek and Ulcigrai \cite{new}, we prove 
 pointwise equidistribution for certain curves which are parameterized by a horospherical 
 subgroup.

Theorem \ref{thm;1} is deduced from Theorem \ref{thm;2} and the asymptotic  equidistribution of measures
proved in  \cite{sw96}. 
This type of deduction  might  be able to  prove pointwise equidistribution in some 
other cases where $U$ is not abelian.

The proof of Theorem \ref{thm;2} is based on
  quantitative estimate of the
$\{g_t: 0\le t\le T\}$ trajectory of  measures. The method is inspired by 
 Chaika and  Eskin \cite{ce} where they prove 
Birkhoff type ergodic theorem for Teichmuller geodesic flows on moduli spaces and 
 by Benoist and Quint \cite{bq132} where they prove almost everywhere equidistribution of Random walks on 
homogeneous spaces.

\textbf{Acknowledgements:}
The author would like to thank  Barak Weiss for suggesting  this problem and generously 
sharing his ideas. 
We also would like to thank Yves Benoist, Jean-Francois Quint, 
Alex Eskin and Alexander Gorodnik for discussions related to this work.

\section{Outline of the proof}\label{s;outline}

We first  outline the proof of Theorem \ref{thm;2} and leave details of the proof of Propositions 
\ref{prop;inv}, \ref{prop;nescape} and \ref{prop;singular} for later sections. 
Let $ G, g_t, U$ be as in Theorem \ref{thm;2}. 
In particular 
$U\le G^+$ is a connected abelian $g_1$ expanding  subgroup   of $G$. 
It follows from Ratner \cite[Proposition 1.3]{r90}  that 
$U$ is  simply 
 connected. We fix an isomorphism of Lie groups 
 \begin{equation}\label{eq;isomorphism}
 u:\mathbb R^m\to U
 \end{equation}
 so that there are positive real numbers $b_1, \ldots, b_m$ 
 such that  for  standard basis $\{\mathbf e_i\}_{1\le i\le m}$
of $\mathbb R^m $ one has 
\begin{equation}\label{eq;Recall}
g_tu(\mathbf e_i)g_{-t}=u(e^{tb_i}\mathbf e_i).
\end{equation}

It is not hard to see that  Theorem \ref{thm;2} is equivalent to the  following theorem which is more convenient to work on by our method.   
\begin{thm}\label{thm;proof}
Let  $\{g_t :t\in \mathbb R\}$ be an  $\Ad$-diagonalizable
one parameter subgroup of a connected semisimple Lie group  $G$.
Let $\Gamma$ be a lattice of a Lie group  $L$ which
contains $G$.  
Suppose that  $U\le G^+$ is a connected  $ g_1 $ expanding abelian  subgroup of $G$.
Let $x\in L/\Gamma$, let the interval $I=[-1, 1]$ and let $u$ be a fixed isomorphism as in (\ref{eq;isomorphism})
so that (\ref{eq;Recall}) holds. Then 
 for almost every $w\in I^m$ 
  \begin{equation}\label{eq;goalpoint}
\lim_{T\to \infty}  \frac{1}{T}\int_0^T g_tu(w)\delta_{x} \dd t=\mu_{\overline{Gx}}.
\end{equation}
\end{thm}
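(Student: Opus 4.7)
The plan is to show that for almost every $w \in I^m$, every weak-$*$ subsequential limit of the probability measures $\mu_{T,x,w} := \frac{1}{T}\int_0^T (g_t u(w))\delta_x \, dt$ coincides with $\mu_{\overline{Gx}}$. Since $\mu_{\overline{Gx}}$ is the unique $G$-invariant probability measure on $\overline{Gx}$, it will suffice to verify that any such limit $\nu$ is (a) a probability measure (no escape of mass), and (b) $G$-invariant. I would fix a countable dense family $\{f_k\} \subset C_c(L/\Gamma)$, work with the scalar averages $S_{T,k}(w) := \frac{1}{T}\int_0^T f_k(g_t u(w) x)\, dt$, establish convergence along the dyadic subsequence $T_n = 2^n$ first, and then interpolate to arbitrary $T$ using the trivial bound $\|f_k\|_\infty$.

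\emph{Non-escape of mass.} Following the Eskin--Margulis strategy I would apply the contraction Lemma \ref{lem;estimate} to a Margulis-type height function on $L/\Gamma$ and combine Fubini with a Borel--Cantelli argument along the dyadic sequence. The output is Proposition \ref{prop;nescape}: for almost every $w$, the family $\{\mu_{T,x,w}\}_{T>0}$ is tight, so every subsequential weak-$*$ limit is a probability measure supported on $\overline{Gx}$.

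\emph{$G$-invariance.} Time averaging makes $\nu$ automatically $\{g_t\}$-invariant; the core task is to extract additional invariance under $U$. The key observation, obtained from (\ref{eq;Recall}) and the abelian property of $U$, is that
\[
g_t u(w + \epsilon_t) x = u(\epsilon) g_t u(w) x \qquad \text{where } \epsilon_t := (e^{-t b_i}\epsilon_i)_i.
\]
Consequently, the Lebesgue average of $S_{T,k}(w')$ over $w'$ in a box of sidelengths $e^{-Tb_i}$ centered at $w$ reproduces, up to controlled error, a translate by $u(\epsilon)$ of $S_{T,k}(w)$. Comparing these smoothings at successive dyadic scales yields a martingale decomposition in the spirit of Chaika--Eskin \cite{ce} and Benoist--Quint \cite{bq132}, whose increments are controlled by the shadowing Lemmas \ref{lem;shade} and \ref{lem;new eq} that couple the continuous flow orbit to a discrete $U$-walk. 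Invoking the martingale convergence theorem and sending $T \to \infty$ delivers Proposition \ref{prop;inv}: for almost every $w$, every subsequential limit $\nu$ is $U$-invariant. The $g_1$-expanding hypothesis then allows one to propagate $U$-invariance and $\{g_t\}$-invariance to invariance under all of $G$ via a Mautner-type argument; Proposition \ref{prop;singular} is the ingredient that rules out limits $\nu$ which are singular with respect to $\mu_{\overline{Gx}}$ and which would otherwise obstruct the upgrade from $U$- to $G$-invariance.

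\emph{Conclusion and main obstacle.} Once $\nu$ is known to be a $G$-invariant probability measure on $\overline{Gx} = Hx$, Ratner's theorem forces $\nu = \mu_{\overline{Gx}}$, and Theorem \ref{thm;proof} follows. The decisive step is the martingale argument: producing a workable decomposition in continuous time requires that the expansion rates on $U$ be compatible with a dyadic filtration on $I^m$ in a way that is simultaneously consistent with the shadowing lemma and with the contraction estimate Lemma \ref{lem;estimate}. This compatibility is exactly what the abelian hypothesis on $U$ buys, and the expected hard part is executing this coupling rigorously — as the author remarks, these two properties cannot both be arranged when $U$ is, say, a Heisenberg group.
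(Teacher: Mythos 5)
Your global skeleton (tightness plus invariance plus exclusion of singular components) matches the paper's outline, but the two steps that actually identify the limit measure are not correct as you have written them. First, the upgrade from $(U,\{g_t\})$-invariance to $G$-invariance cannot be done by ``a Mautner-type argument'': Mautner phenomena concern vectors in unitary representations, and invariance of a \emph{measure} under $\{g_t\}$ does not propagate to horospherical directions (the measure on a compact $\{g_t\}$-orbit is the standard counterexample); nor does $\nu$ act on a Hilbert space in which the missing elements of $G$ operate unitarily. The paper instead uses the $g_1$-expanding hypothesis to see that the group $H$ generated by $\{g_t\}$ and $U$ is (Zariski-)epimorphic in $G$ and then invokes Mozes's theorem (\cite{m95}, Theorem 1): any $H$-invariant probability measure on $L/\Gamma$ is $G$-invariant. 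Some such input is indispensable, and your proposal supplies no substitute for it.

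Second, your concluding claim that ``once $\nu$ is a $G$-invariant probability measure on $\overline{Gx}$, Ratner's theorem forces $\nu=\mu_{\overline{Gx}}$'' is false: a $G$-invariant probability measure on $\overline{Gx}$ can have ergodic components supported on proper $G$-ergodic finite volume homogeneous subspaces $Y\subset\overline{Gx}$ (and on their translates by the centralizer $C_L(G)$), e.g.\ closed $G$-orbits of smaller dimension. Ruling this out is exactly the role of Proposition \ref{prop;singular} and Corollary \ref{cor;nobad}, used \emph{after} the invariance upgrade together with the countability of the families $C_L(G)\mu_Y$ (\cite{bq132}, Proposition 2.1) and the reduction, via Ratner's orbit closure theorem, to the case $Gx$ dense; you instead assign Proposition \ref{prop;singular} to the invariance upgrade, where it plays no part, and then let Ratner do work it cannot do. A lesser point: your mechanism for Proposition \ref{prop;inv} (a martingale built from box-smoothings controlled by Lemmas \ref{lem;shade} and \ref{lem;new eq}) is not what those lemmas are for — they couple the flow to the random walk in the recurrence arguments — and it is left too vague to assess; the paper proves $U$-invariance by the exponential decorrelation estimate of Lemma \ref{lem;decay} combined with the law-of-large-numbers lemma of \cite{ce}, which is a cleaner route you could adopt, since your identity $g_tu(w+\epsilon_t)x=u(\epsilon)g_tu(w)x$ is precisely its starting point.
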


Here $g_t u(w)\delta_x$ is the pushforward of the Dirac measure $\delta_x$  by $g_tu(w)$ and 
it is equal to $\delta_{g_t u(w)x}$.
In the rest of this section we assume the notation and assumptions in Theorem \ref {thm;proof}.
The proposition below is  about  unipotent invariance.
\begin{prop}\label{prop;inv}
For almost every $w\in I^m$, if $\nu_w$ is any weak$^*$ limit of
$\frac{1}{T}\int_0^T g_t u(w) \delta_x\dd t$ as $T\to \infty$, then $\nu_w$ is invariant under $U$.
\end{prop}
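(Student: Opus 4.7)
The plan is, for each $u_0 = u(v) \in U$ and each $f \in C_c(L/\Gamma)$, to prove that
\[
\Delta_T(w) := \frac{1}{T}\int_0^T \bigl[f(u_0 g_t u(w) x) - f(g_t u(w) x)\bigr]\,dt \longrightarrow 0 \qquad (T\to\infty)
\]
for almost every $w \in I^m$. Choosing countable dense subsets of $U$ and of $C_c(L/\Gamma)$ and intersecting the resulting full-measure sets, any weak$^*$ accumulation point $\nu_w$ of $\frac{1}{T}\int_0^T g_t u(w)\delta_x\,dt$ satisfies $\nu_w(f\circ u_0)=\nu_w(f)$ on the dense sets; continuity in the pair $(u_0,f)$ together with the fact that a connected Lie group is generated by any neighborhood of $e$ then promote this to $U$-invariance of $\nu_w$.

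The key algebraic identity is $u_0 g_t u(w) = g_t u(w + D_{-t} v)$, obtained from (\ref{eq;Recall}) and the abelian-ness of $U$, where $D_{-t} v = (e^{-b_1 t} v_1,\ldots, e^{-b_m t} v_m)$ and $|D_{-t}v| = O(e^{-bt})$ with $b = \min_i b_i > 0$. With this identity,
\[
\Delta_T(w) = \frac{1}{T}\int_0^T \bigl[f(g_t u(w + D_{-t} v) x) - f(g_t u(w) x)\bigr]\,dt.
\]

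I would aim for a variance estimate
\[
V_T := \int_{I^m}\Delta_T(w)^2\,dw = \frac{1}{T^2}\int_0^T\int_0^T C(s,t)\,ds\,dt = O(T^{-\alpha})
\]
for some $\alpha > 0$, with $C(s,t) := \int_{I^m}\phi(g_s u(w)x)\phi(g_t u(w)x)\,dw$ and $\phi = f\circ u_0 - f$. The near-diagonal contribution $\{|s-t|\le T_0\}$ is $O(T_0/T)$ trivially. For the off-diagonal part, writing $g_s u(w) = u(D_s w) g_s$ converts $C(s,t)$ (assuming $s \le t$) into an integral of $\phi\cdot(\phi\circ g_{t-s})$ against the push-forward of Lebesgue measure from $I^m$ to the expanding $U$-box $u(D_s I^m) g_s x$. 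Since $\phi$ integrates to zero against any $U$-invariant probability measure on $L/\Gamma$, and the $g_1$-expansion of $U$ should make these boxes quantitatively equidistributed under $g_{t-s}$ -- this is precisely the content encoded in the shadowing and contraction lemmas \ref{lem;shade}, \ref{lem;new eq}, \ref{lem;estimate} cited in the outline -- one expects $C(s,t)$ to decay polynomially in $|s-t|$.

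Assuming such a bound, Borel--Cantelli along $T_n = n^\beta$ with $\beta\alpha > 1$ gives $\Delta_{T_n}(w) \to 0$ for a.e.\ $w$; interpolation to arbitrary $T$ uses the elementary Lipschitz-in-$T$ estimate $|\Delta_T(w) - \Delta_{T'}(w)| \le 2\|f\|_\infty |T-T'|/\min(T,T')$ together with $T_{n+1}/T_n \to 1$. The principal obstacle is the correlation decay for $C(s,t)$: a direct Fubini plus substitution argument only yields $|\int_{I^m}\Delta_T\,dw| = O(1/T)$, which is too weak to upgrade to an almost-everywhere pointwise statement, so genuine dynamical input -- the quantitative equidistribution of expanding $U$-boxes under $g_1$ -- is required.
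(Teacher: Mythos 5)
Your reduction and endgame match the paper: the key identity $u_0 g_t u(w) = g_t u(w+D_{-t}v)$ from (\ref{eq;Recall}) and abelianness, a countable dense family of test functions, a variance estimate along subsequences plus Borel--Cantelli plus Lipschitz-in-$T$ interpolation. This is exactly what the paper does (it packages the last steps by citing \cite{ce}~Lemma~3.4), and your reduction to translates $u_0$ is functionally the same as the paper's reduction to $u(s\mathbf e_i)$.

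However, there is a real gap precisely where you flag ``the principal obstacle,'' and your diagnosis of what would fill it is off. You suggest that the correlation bound $C(s,t)\lesssim$ (something summable) requires ``quantitative equidistribution of expanding $U$-boxes under $g_1$'' as encoded in the shadowing and contraction lemmas. Those lemmas are used for nonescape of mass and for recurrence to singular subspaces, not here, and appealing to a quantitative equidistribution statement at this point would be both overkill and dangerously close to circular, since equidistribution is what the whole theorem is trying to prove. The paper's Lemma~\ref{lem;decay} establishes exponential correlation decay $\int_{I^m}\psi_t\psi_l\,dw \ll e^{-b|l-t|/2}$ by a purely elementary two-scale argument. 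Fix $l>t$ and $i=1$, and tile the $\mathbf e_1$-direction by intervals $I(r)$ of length $2e^{-(l+t)b/2}$. At that intermediate scale, (i) $\psi_t$ is nearly constant on $I(r)$ because $g_t$ expands by only $e^{bt}$ and $e^{bt}\cdot e^{-(l+t)b/2}=e^{-(l-t)b/2}$, together with the Lipschitz property of $\psi$; and (ii) the average of $\psi_l$ over $I(r)$ is $O(se^{-(l-t)b/2})$ by a telescoping cancellation: $\psi_l$ is by construction $\psi(g_lu(\cdot)x)-\psi(g_lu(\cdot+se^{-bl}\mathbf e_1)x)$, so $\int_{I(r)}\psi_l$ reduces to two boundary strips of width $se^{-bl}$, which after dividing by $|I(r)|$ gives the stated bound. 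Multiplying (i) and (ii), summing over $r$, and integrating in the remaining variables gives the exponential decay with no dynamical input whatsoever. The mechanism you are missing is this cancellation coming from $\psi_l$ being a difference of two translates at scale $e^{-bl}$; once you see it, the rest of your proposal goes through as written.
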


Next we prove pointwise nonescape  of mass (Corollary \ref{cor;nescape})
using    quantitative nonescape of mass for $\{g_t: 0\le t\le T\}$ trajectory of the measure. 
For every measurable subset $K$ of $X$, positive real number $T$ and 
$w\in I^m$, we use $\mathcal A_K^T(w)$ to denote the proportion of the trajectory $\{g_tu(w)x: 0\le t\le T\}$ in $K$. 
More precisely,
\begin{equation}\label{eq;average}
\mathcal A_K^T(w):=\frac{1}{T}\int_0^T \mathbbm 1 _K(g_tu(w)x)\dd t
\end{equation}
where $\mathbbm 1_K$ is the characteristic function of $K$. 
For every measurable subset $J$ of $\mathbb R^m$ we let $|J|$ to denote 
the Lebesgue measure of $J$.
\begin{prop}\label{prop;nescape}
For every $0<\varepsilon <1 $, there is a compact subset 
 $K$ of $L/\Gamma$ and positive real  numbers  $a<1, C\ge 1$ such that 
 \begin{equation}\label{eq;pnes}
\left |\{w\in I^m: \mathcal A_K^T(w)\le 1- \varepsilon   \}\right |\le C  a^T
 \end{equation}
 for all $T> 0$.
\end{prop}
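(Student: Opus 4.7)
The plan is to discretize the continuous trajectory into a Markov chain via the shadowing lemma, apply a Margulis–Kleinbock style contraction for a proper height function iteratively, and run a Chernoff argument on the chain to produce the desired exponential-in-$T$ bound. Let $\alpha\colon L/\Gamma\to[1,\infty)$ be a proper function with compact sublevel sets $K_R=\{\alpha\le R\}$ satisfying the one-step inequality
\[
\frac{1}{|I^m|}\int_{I^m}\alpha(g_{T_0}u(w)y)\,dw\le c_0\,\alpha(y)+b_0,\qquad y\in L/\Gamma,
\]
for some $T_0>0$, $0<c_0<1$ and $b_0>0$; this is the content of Lemma \ref{lem;estimate}. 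Because $\mathbbm{1}_{K_R^c}\le \alpha/R$, the event $\mathcal A_{K_R}^T(w)\le 1-\epsilon$ forces $\int_0^T\alpha(g_t u(w)x)\,dt\ge \epsilon R T$, so it suffices to bound the Lebesgue measure of $w\in I^m$ for which this integral is large.

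Using the shadowing Lemmas \ref{lem;shade} and \ref{lem;new eq}, I approximate the continuous integral up to a bounded multiplicative constant by a discrete sum
\[
\int_0^T\alpha(g_t u(w)x)\,dt\;\asymp\; T_0\sum_{k=0}^{N-1}\alpha(X_k),\qquad N=\lfloor T/T_0\rfloor,
\]
where $X_k=g_{T_0}u(w_k)\cdots g_{T_0}u(w_1)x$ is a Markov chain driven by independent uniform increments $w_1,\ldots,w_N\in I^m$, the correspondence $w\leftrightarrow(w_1,\ldots,w_N)$ being a measure preserving change of variables built out of the scaling (\ref{eq;Recall}). The contraction then reads $\mathbb E[\alpha(X_{k+1})\mid X_k]\le c_0\,\alpha(X_k)+b_0$, while compactness of $\{u(w):w\in I^m\}$ supplies a deterministic one-step bound $\alpha(X_{k+1})\le M\alpha(X_k)+M$ for some $M>0$. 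From these two inputs a standard exponential-moment Chernoff argument for contracting Markov chains produces
\[
\mathbb E\Bigl[\exp\Bigl(\eta\sum_{k=0}^{N-1}\alpha(X_k)\Bigr)\Bigr]\le C_1(x)\,e^{C_2 N}
\]
for some $\eta>0$ and a constant $C_2$ independent of $N$.

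Markov's inequality then gives
\[
\Bigl|\Bigl\{w\in I^m:\sum_{k=0}^{N-1}\alpha(X_k)\ge \epsilon R N\Bigr\}\Bigr|\le C_1(x)\exp\bigl((C_2-\eta\epsilon R) N\bigr),
\]
and choosing $R$ so large that $\eta\epsilon R>2C_2$ produces $c^T$-decay with some $c<1$ for $T\ge T_1$; enlarging $c$ absorbs the case $T\in[0,T_1]$. Pulling back through the shadowing correspondence yields the proposition with $K=K_R$. The main obstacle is the exponential-moment upgrade: the first-moment contraction alone is insufficient and one must combine it with the uniform boundedness of one-step increments (coming from compactness of $I^m$) to run the Chernoff argument. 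A secondary technical point is ensuring the shadowing correspondence is quantitative enough that $L^\infty$-type bounds for the chain transfer faithfully to Lebesgue measure on $w\in I^m$ after the $g_t$-induced time change, which is precisely the role of Lemma \ref{lem;new eq}.
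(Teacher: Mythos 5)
There is a genuine gap, and it occurs already in your first reduction. From $\mathbbm 1_{K_R^c}\le\alpha/R$ you pass from the event $\mathcal A_{K_R}^T(w)\le 1-\epsilon$ to the event $\int_0^T\alpha(g_tu(w)x)\,dt\ge\epsilon RT$, and then try to show the latter has measure $\le c^T$. But the latter event is \emph{not} exponentially rare, for any choice of $R$: the height function is unbounded and has only polynomial tails (the measure of $\{\alpha>R\}$ decays like a negative power of $R$, since $\alpha$ is comparable to a small power $\vartheta$ of the reciprocal injectivity radius). A single excursion into the cusp to depth of order $\log(\epsilon RT)$ already makes $\int_0^T\alpha\gtrsim\epsilon RT$ while occupying a negligible \emph{proportion} of the time interval, and such an excursion occurs for a set of $w$ whose measure decays only polynomially in $T$. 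For the same reason your central claimed estimate $\mathbb E\bigl[\exp\bigl(\eta\sum_{k<N}\alpha(X_k)\bigr)\bigr]\le C_1e^{C_2N}$ cannot be obtained from the first-moment drift $\mathbb E[\alpha(X_{k+1})\mid X_k]\le c_0\alpha(X_k)+b_0$ together with the deterministic bound $\alpha(X_{k+1})\le M\alpha(X_k)+M$: a chain which multiplies $\alpha$ by $M$ with probability $c_0/M$ and drops to a bounded value otherwise satisfies both hypotheses, yet $\sum_k\alpha(X_k)$ then has only polynomial tails at scale $RN$ and its exponential moments grow superexponentially in $N$. So no choice of $\eta,R$ closes the Chernoff step, and the strategy of controlling the ergodic integral of the unbounded function $\alpha$ cannot yield the proposition.

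The paper avoids exactly this by applying the exponential-moment/large-deviation machinery not to the \emph{values} of $\alpha$ but to the \emph{lengths of excursions} away from a fixed sublevel set $X_{l_0}=\{\alpha\le l_0\}$, which do have exponential tails. Concretely: the abelian shadowing Lemma \ref{lem;shade} (not Lemma \ref{lem;new eq}, which concerns the function $\beta$ in the singular-subspace argument) is used only as a one-sided domination on boxes $I_n(w)$ of size $e^{-ntb_i}$ — the fixed-$w$ trajectory is not turned into a genuine i.i.d.-driven Markov chain by a measure-preserving change of variables — and, combined with the drift inequality of Lemma \ref{lem;contract} (the drift is not Lemma \ref{lem;estimate}, which only feeds into its proof), it gives the conditional contraction of Lemma \ref{lem;basic ineq} with respect to the nested box filtration $\mathcal B_n$. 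Iterating this along an excursion yields $s_q\le\tfrac12 s_{q-1}$ and hence, via Chebyshev, that the conditional probability that the $(i+1)$st return time exceeds the $i$th by $q$ is at most $e^{-\vartheta_0q}$ (Lemma \ref{lem;pre expon}). The Chernoff argument is then run on the truncated return-time increments $\mathbbm 1_Q(\sigma_i-\sigma_{i-1})$ through Lemma \ref{lem;main}, which bounds the proportion of time spent in long excursions — i.e.~outside a compact set — and this is what gives the $c^T$ bound after the discrete-to-continuous step of Lemma \ref{lem;continuousd}. If you want to salvage your write-up, you must replace the observable $\alpha(X_k)$ by the excursion-length structure (or equivalently by a bounded truncation of the time outside $X_{l_0}$) before invoking any exponential-moment inequality.
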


\begin{cor}\label{cor;nescape}
For almost every $w\in I^m$, any weak$^*$ limit point of 
$
\frac{1}{T}\int_0^T g_tu(w)  \delta_{x} \dd t
$
 as $T\to \infty $ is a probability measure.
\end{cor}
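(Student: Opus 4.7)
The plan is to deduce the corollary from Proposition \ref{prop;nescape} by a standard Borel--Cantelli argument applied to a sequence of shrinking $\epsilon$'s, followed by a routine passage from the discrete averaging times to the continuous ones, and finally to the weak$^*$ limits.

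First I would apply Proposition \ref{prop;nescape} with $\epsilon = 1/k$ for each positive integer $k$ to get a compact subset $K_k \subseteq L/\Gamma$ and a constant $c_k \in (0,1)$ such that
\[
\bigl|\{w \in I^m : \mathcal A_{K_k}^n(w) \le 1 - 1/k\}\bigr| \le c_k^n
\]
for every $n \in \mathbb N$. Since $\sum_{n\ge 0} c_k^n < \infty$, the Borel--Cantelli lemma gives a Lebesgue null set $E_k \subseteq I^m$ outside of which the inequality $\mathcal A_{K_k}^n(w) \le 1 - 1/k$ holds for only finitely many $n$. Setting $E = \bigcup_k E_k$, the set $E$ is still null, and for every $w \in I^m \setminus E$ and every $k$ we have $\liminf_{n\to\infty,\,n\in\mathbb N}\mathcal A_{K_k}^n(w) \ge 1 - 1/k$.

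Next I would pass from integer to continuous time. For $T \in [n,n+1]$ the elementary bound $\mathcal A_{K_k}^T(w) \ge (n/T)\mathcal A_{K_k}^n(w)$ (since $\mathbbm 1_{K_k} \ge 0$ and the integral over $[0,T]$ dominates the one over $[0,n]$) combined with $n/T \to 1$ yields
\[
\liminf_{T\to\infty}\mathcal A_{K_k}^T(w) \ge 1 - 1/k
\]
for every $w \notin E$ and every $k$.

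Finally I would translate this into a lower bound on any weak$^*$ limit. Fix $w \notin E$, and let $\nu_w$ be a weak$^*$ limit of $\mu_{T_j}(w) := \frac{1}{T_j}\int_0^{T_j} g_t u(w)\delta_x\, dt$ along some sequence $T_j \to \infty$. For each $k$ choose $f_k \in C_c(L/\Gamma)$ with $0 \le f_k \le 1$ and $f_k \equiv 1$ on $K_k$; then
\[
\int f_k \, d\mu_{T_j}(w) \ge \mu_{T_j}(w)(K_k) = \mathcal A_{K_k}^{T_j}(w),
\]
so passing to the limit $j \to \infty$ gives $\int f_k \, d\nu_w \ge 1 - 1/k$, hence $\nu_w(L/\Gamma) \ge 1 - 1/k$. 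Letting $k \to \infty$ yields $\nu_w(L/\Gamma) \ge 1$, and since $\nu_w$ is a vague limit of probability measures its total mass is at most $1$, so $\nu_w$ is a probability measure. There is no real obstacle beyond Proposition \ref{prop;nescape} itself: the exponential decay $c^T$ is exactly what is needed for Borel--Cantelli to apply, so the corollary is essentially a bookkeeping consequence of the quantitative nonescape estimate.
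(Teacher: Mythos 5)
Your proof is correct and follows essentially the same route as the paper's: apply Proposition \ref{prop;nescape} for a countable sequence of $\epsilon$'s, use Borel--Cantelli along integer times, pass to continuous time, and conclude that weak$^*$ limits have full mass. The paper leaves the last two steps implicit, while you spell them out, but there is no difference in substance.
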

\begin{proof}
Given $0< \varepsilon <1$,  according to Proposition \ref{prop;nescape} there exists   a compact subset $K$ of $L/\Gamma $
 and  positive numbers  $a<1, C\ge 1$ so that  (\ref{eq;pnes}) holds
  as $T$ runs through all the positive integers.
So the
 Borel-Cantelli Lemma implies  that 
 \begin{equation}\label{eq;liminfeasy}
 \liminf_{n\to \infty, n\in \mathbb N}\mathcal A_K^n(w)\ge 1- \varepsilon
 \end{equation}
 for almost every  $w\in I^m$. If $w$ satisfies  (\ref{eq;liminfeasy})  then
 $
 \liminf_{T\to \infty} \mathcal A_{K}^T(w)\ge 1- \varepsilon . 
 $
 This means that any weak$^*$ limit $\nu_w$ of $
 \frac{1}{T}\int_0^T g_tu(w)  \delta_{x} \dd t
 $ satisfies $\nu_w(L/\Gamma) \ge 1- \varepsilon$. 
  The 
  conclusion follows by 
taking  $ \varepsilon $  arbitrarily  close to zero.
 \end{proof}

Let $H$ be the group generated by $\{g_t :t\in \mathbb R\} $ and $U$. It follows from 
Mozes \cite[Theorem 1]{m95}  that $H$  is an epimorphic subgroup of $G$.  
We say  a finite volume homogeneous subspace $Y$ of $L/\Gamma$ is $G$ ergodic if $G$ acts ergodically on $Y$ with respect to the probability homogeneous measure $\mu_Y$. 
Let $C_L(G)$ be the group of centralizers of $G$ in $L$.
It follows from  \cite[Proposition 2.1]{bq132}  that the set of  $G$ ergodic probability measures on $L/\Gamma$
 is at most a countable union of the set  
 \[
 C_L(G)\mu_Y:=\{g\mu_Y: g\in C_L(G)\}
 \]  where  $Y $ is a $G$ ergodic finite volume homogeneous subspace. 
 Without loss of generality we may assume that $\overline{Gx}=L/\Gamma$. 
We show that for almost every $w\in I^m$ any weak$^*$ limit $\nu_w$ 
 of 
$\frac{1}{T}\int_0^T g_tu(w)\delta_{x} \dd t$ as $T\to \infty $
 does not 
put any mass on 
$C_L(G)Y$ for any proper $G$ ergodic  finite volume homogeneous  subspace $Y$.
 This is  proved by  a similar quantitative result
for $\{g_t: 0\le t\le T\}$ trajectory of the measure.
\begin{prop}\label{prop;singular}
Suppose that $Gx$ is dense in $L/\Gamma$. 
Let $Y$ be a proper $G$ ergodic finite volume homogeneous subspace. 
For any  compact subset  $F$ of $C_L(G)$ and any $ \varepsilon >0$, there exists a 
compact subset $K$ of $L/\Gamma$
with $K\cap FY=\emptyset$
 and positive numbers  $a<1, C\ge 1$ such that 
 \begin{equation*}
\left |\{w\in I^m: \mathcal A_{K}^T(w)\le 1- \varepsilon   \}\right |\le C  a^T
 \end{equation*}
 for all $T> 0$.
\end{prop}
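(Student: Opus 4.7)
The plan is to adapt the nonescape strategy of Proposition \ref{prop;nescape} by replacing the cusp-detecting height function with one that detects proximity to $FY$. The approach is modeled on the linearization technique of Dani--Margulis combined with Margulis--Eskin--Mozes style quantitative estimates, and on the random-walk framework of \cite{bq132}.

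First I would construct a height function. Because $Y$ is a $G$-ergodic finite volume homogeneous subspace, by Ratner/Dani--Margulis there exist a finite-dimensional representation $(V,\rho)$ of $L$ and a vector $v_Y\in V$, fixed by the stabilizer of $Y$, such that $FY$ is cut out in $L/\Gamma$ by the condition that some $\Gamma$-translate of $\rho(\ell)v_Y$ lies in $F\cdot v_Y$. Since $F\subset C_L(G)$ commutes with $G$ and $v_Y$ is $G$-fixed, the orbit $F\cdot v_Y$ is a compact subset of the $G$-invariants $V^{G}$. For a small $\alpha>0$ set
\[
\varphi_Y(\ell\Gamma)=\sup_{\gamma\in\Gamma}\mathrm{dist}\bigl(\rho(\ell)\gamma v_Y,\;F\cdot v_Y\bigr)^{-\alpha},
\]
truncated at a level that also incorporates the nonescape function of Proposition \ref{prop;nescape}. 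The sublevel sets of $\varphi_Y$ are then compact subsets of $L/\Gamma$ disjoint from $FY$.

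Second, I would prove a one-step contraction inequality of the form
\[
\int_{I^{m}}\varphi_Y\bigl(g_{T}u(w)\ell\bigr)\,dw\le c^{T}\varphi_Y(\ell)+b,\qquad c<1,\;b<\infty,
\]
for $T$ sufficiently large. The core is the pointwise averaging estimate
\[
\int_{I^{m}}\bigl\|g_{T}u(w)v\bigr\|^{-\alpha}\,dw\le c^{T}\|v\|^{-\alpha}+b'
\]
for vectors $v$ in the relevant orbit outside a neighborhood of $V^{G}$. This is exactly where the $g_1$-expanding property of $U$ is used: for any nonzero $v\notin V^{G}$, $w\mapsto\pi_{+}(\rho(u(w))v)$ is a nonzero polynomial map on $\mathbb R^{m}$, and by \eqref{eq;Recall} $g_t$ expands $V^{+}$ exponentially, so averaging $\|g_Tu(w)v\|^{-\alpha}$ over $w\in I^{m}$ produces genuine contraction when $\alpha$ is small. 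Passing from the vector estimate to the multi-translate function $\varphi_Y$ is a standard Margulis-type finiteness argument limiting how many $\gamma$-translates can simultaneously compete near $F\cdot v_Y$. Since $Gx$ is dense and $Y$ is proper, $x\notin FY$, hence $\varphi_Y(x)<\infty$, and iterating the inequality keeps $\int_{I^m}\varphi_Y(g_Tu(w)x)\,dw$ bounded uniformly in $T$.

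Third, I would upgrade the $L^{1}$ bound to the asserted exponential measure bound. This parallels the final step behind Proposition \ref{prop;nescape}: partition $[0,T]$ into blocks of bounded length, note that the contraction inequality iterates submultiplicatively across blocks, and apply Chebyshev's inequality to an exponential moment $\int_{I^{m}}\exp(\beta\sum_k\varphi_Y(g_{kT_0}u(w)x))\,dw$ to get $c_1^{T}$ decay on the set where the trajectory spends fraction $>\epsilon_1$ of its time in $\{\varphi_Y>M\}$. Taking $K_1$ to be a large enough sublevel set of $\varphi_Y$ completes the proof.

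The main obstacle is the uniformity of the contraction estimate in the second step as $v$ approaches the compact set $F\cdot v_Y\subset V^{G}$: the contraction is transparent when $v$ is far from $V^{G}$, but one must decompose $v$ into its $V^{G}$-component (on which $F$ acts, but $g_t$ and $U$ act trivially) and its transverse component (where the $g_1$-expanding hypothesis bites), and control the crossover uniformly. Choosing $(V,\rho)$ so that the $G$-invariants split off $\rho(U)$-equivariantly, and tracking the estimate carefully near $F\cdot v_Y$, is the technical heart. As the paper remarks, the abelianness of $U$ plays no essential role in this argument.
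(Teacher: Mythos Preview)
Your outline has the right architecture (height function detecting proximity to $FY$, contraction under the $g_t u(I^m)$-average, then exponential decay), but it skips the step that actually does the work in the paper and that is precisely where the abelian hypothesis on $U$ enters. Your inequality $\int_{I^m}\varphi_Y(g_{T_0}u(w)y)\,dw\le c\,\varphi_Y(y)+b$ is a one-step estimate for the \emph{random walk} with step distribution on $g_{T_0}u(I^m)$. To control the \emph{flow} average $\mathcal A_{K_1}^T(w)$ you must iterate, i.e.\ pass from $\int_{I^m}\varphi_Y(g_{nT_0}u(w)x)\,dw$ to $\int_{I^m}\varphi_Y(g_{(n+1)T_0}u(w)x)\,dw$. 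This is the content of the shadowing Lemma~\ref{lem;shade} and its variant inside Lemma~\ref{lem;new eq}: one refines $I^m$ into boxes at scale $e^{-nT_0 b_i}$ and uses that, because $U$ is abelian, $g_{T_0}u(w_1)\,g_{nT_0}u(w)=g_{(n+1)T_0}u(w+w_1\cdot w_n')$, so the inner average over $w_1\in I^m$ covers the next generation of boxes. Without this commutation the change of variables fails and your sentence ``the contraction inequality iterates submultiplicatively across blocks'' has no content. The paper states explicitly (remarks after Theorem~\ref{thm;2} and after Lemma~\ref{lem;shade}) that the shadowing lemmas are the \emph{only} place the abelian hypothesis is used; your final sentence inverts that remark.

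A second gap is the coupling with the cusp. The ``Margulis-type finiteness'' you invoke---at most one $\gamma$-translate competing near $Fv_Y$---holds only on a fixed compact set, so a global contraction $\int_{I^m}\varphi_Y(g_{T_0}u(w)y)\,dw\le c\,\varphi_Y(y)+b$ valid for all $y\in L/\Gamma$ is not available. The paper therefore defines its height $\beta$ only on a thickening $NX_{l_0}$ of a compact set, proves contraction (Lemmas~\ref{lem;lcontract} and~\ref{lem;new eq}) along the return times $\kappa_n$ to $X_{l_0}$, then introduces a second layer of return times $\tau_i$ to the $\beta$-sublevel set $X_l^Y$ and obtains exponential tails on the gaps $\kappa_{\tau_{n+1}}-\kappa_{\tau_n}$ (Lemmas~\ref{lem;new exp}--\ref{lem;combine}); these feed into the large-deviation Lemma~\ref{lem;main}. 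Your proposed exponential-moment bound on $\int_{I^m}\exp\bigl(\beta\sum_k\varphi_Y(g_{kT_0}u(w)x)\bigr)\,dw$ cannot be established without exactly this filtration/martingale structure. Incidentally, rather than a linearizing representation, the paper follows \cite{bq13} and sets $\beta(y)=\|v\|^{-\vartheta}$ for $y\in\exp(v)Of_iY$ with $v$ in a $G$-invariant complement $\mathfrak t$ of $\mathfrak s+\mathfrak c$ in $\mathfrak l$; since $\mathfrak t$ has no nonzero $G$-invariants by construction, the contraction comes directly from Lemma~\ref{lem;estimate} with no need to split off $V^G$ by hand.
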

\begin{cor}\label{cor;nobad}
Suppose that $Gx$ is dense in $L/\Gamma$.  
Let $Y$ be a proper $G$ ergodic finite volume homogeneous subspace. 
Then for almost every $w\in I^m$ one has $\nu_w(C_L(G)Y)=0$ for 
any
 weak$^*$ limit $\nu_w$ of $\frac{1}{T}\int_0^T g_tu(w)\delta_x\dd t$
 as $T\to \infty $.
\end{cor}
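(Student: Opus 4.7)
My plan is to imitate closely the deduction of Corollary \ref{cor;nescape} from Proposition \ref{prop;nescape}, with one extra layer of bookkeeping to handle the fact that $C_L(G)Y$ is in general not compact. Since $C_L(G)$ is a closed subgroup of the second countable Lie group $L$, it is $\sigma$-compact, so I fix an exhaustion $C_L(G)=\bigcup_{n\ge 1}F_n$ by an increasing sequence of compact sets. For each $n$, the translate $F_nY$ is closed in $L/\Gamma$: if $f_ky_k\to z$ with $f_k\in F_n$ and $y_k\in Y$, then along a subsequence $f_k\to f\in F_n$ by compactness, whence $y_k=f_k^{-1}(f_ky_k)\to f^{-1}z\in Y$ since $Y$ is closed, giving $z\in F_nY$.

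For each pair $(n,k)\in\mathbb N^2$, I would invoke Proposition \ref{prop;singular} with $F=F_n$ and $\epsilon_1=1/k$ to produce a compact set $K_{n,k}\subset L/\Gamma$ disjoint from $F_nY$ and a constant $c_{n,k}<1$ such that
\[
|\{w\in I^m:\mathcal A_{K_{n,k}}^T(w)\le 1-1/k\}|\le c_{n,k}^T\qquad\text{for every }T\ge 0.
\]
Restricting $T$ to the positive integers and applying Borel--Cantelli produces a set $E_{n,k}\subset I^m$ of full Lebesgue measure on which $\mathcal A_{K_{n,k}}^T(w)\ge 1-1/k$ for all sufficiently large $T\in\mathbb N$; the same monotone comparison employed in the proof of Corollary \ref{cor;nescape} upgrades this to $\liminf_{T\to\infty}\mathcal A_{K_{n,k}}^T(w)\ge 1-1/k$ with $T$ ranging over $\mathbb R_{>0}$.

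Since $K_{n,k}$ is compact, $F_nY$ is closed and disjoint from $K_{n,k}$, and $L/\Gamma$ is locally compact Hausdorff, Urysohn's lemma supplies $\varphi_{n,k}\in C_c(L/\Gamma)$ with $0\le\varphi_{n,k}\le 1$, $\varphi_{n,k}\equiv 1$ on $K_{n,k}$, and $\varphi_{n,k}\equiv 0$ on a neighbourhood of $F_nY$. For $w\in E_{n,k}$ and any weak$^*$ limit $\nu_w$ of $\frac{1}{T}\int_0^T g_tu(w)\delta_x\,dt$ along some sequence $T_i\to\infty$, the inequality $\varphi_{n,k}\ge\mathbbm 1_{K_{n,k}}$ yields
\[
\int\varphi_{n,k}\,d\nu_w=\lim_{i\to\infty}\frac{1}{T_i}\int_0^{T_i}\varphi_{n,k}(g_tu(w)x)\,dt\ge\liminf_{T\to\infty}\mathcal A_{K_{n,k}}^T(w)\ge 1-\frac{1}{k}.
\]
Because $\varphi_{n,k}$ vanishes on $F_nY$ and $\nu_w$ has total mass at most $1$, this forces $\nu_w(F_nY)\le 1/k$. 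Setting $E=\bigcap_{n,k\ge 1}E_{n,k}$, a countable intersection of full measure sets, I conclude that for every $w\in E$ and every $n$ one has $\nu_w(F_nY)=0$; the monotone convergence $\nu_w(C_L(G)Y)=\lim_n\nu_w(F_nY)$ then gives $\nu_w(C_L(G)Y)=0$, as desired.

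I do not anticipate a substantive obstacle in this corollary: the entire argument is formal once Proposition \ref{prop;singular} is in hand, and the technical heart of the story is in that proposition rather than in the deduction. The only items requiring even mild care are the $\sigma$-compactness of $C_L(G)$ and the closedness of $F_nY$, both handled above.
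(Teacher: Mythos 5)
Your proposal is correct and follows exactly the route the paper intends: the paper omits the proof of this corollary, saying it is the same Borel--Cantelli argument as for Corollary \ref{cor;nescape} but with Proposition \ref{prop;singular} in place of Proposition \ref{prop;nescape}, and that is precisely what you carry out. The extra bookkeeping you supply (exhausting $C_L(G)$ by compact sets $F_n$, noting $F_nY$ is closed, and passing the bound to the weak$^*$ limit via a Urysohn function, then letting $k\to\infty$ and $n\to\infty$) is the natural way to fill in the omitted details and is done correctly.
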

The proof  uses Proposition \ref{prop;singular} and  is the same as that of Corollary \ref{cor;nescape}, so we 
omit the details here. 

\begin{proof}[Proof of Theorem \ref{thm;proof}]
It follows from  Ratner's  orbit closure  theorem \cite{r912} that  $Gx$ 
is dense in a $G$ ergodic finite volume homogenous subspace of $L/\Gamma$.
So we can without loss of generality assume that 
$Gx$ is dense in $L/\Gamma$. 
  
It follows from Proposition \ref{prop;inv}, Corollaries  \ref{cor;nescape}  and 
\ref{cor;nobad} that there exists a  subset  $J$ of $I^m$ with full measure such that  for
any $w\in J$,  any weak$^*$ limit $\nu_w$ of 
$\frac{1}{T}\int_0^T g_tu(w)\delta_x\dd t$ 
  as $T\to \infty$ has the following properties: 
\begin{itemize}
\item $\nu_w$ is invariant under $U$, and moreover invariant under $G$.
\item $\nu_w$ is a probability measure.
\item $\nu_w(C_L(G)Y)=0$ for any proper $G$ ergodic  finite volume homogeneous subspace $Y$.
\end{itemize}
Therefore for any $w\in J$ we have (\ref{eq;goalpoint})
holds. This completes the proof.
\end{proof}

Here  we describe a general strategy of using Theorem \ref{thm;2} to prove 
pointwise equidistribution for other $g_1$ expanding subgroups not necessarily abelian.  In particular we derive 
Theorem \ref{thm;1} from it.  We need to use the following result about asymptotic 
equidistribution of measures.  
\begin{thm}[\cite{s96},\cite{sw96}]\label{thm;old}
Let $U'$ be a connected $\Ad$-unipotent $ g_1 $ expanding subgroup of $G$. Suppose that $\mu$ is a
probability measure on $U'$ absolutely continuous with respect to the Haar measure. 
Let $\mu_ x$ be the push forward of $\mu$ to $L/\Gamma$
with respect to the map $u\in U'\to ux$.
Then 
\[
g_t \mu_ x\to \mu_{\overline{Gx}} \quad \mbox{ as } t \to \infty.
\] 
\end{thm}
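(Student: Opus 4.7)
The plan is to use the standard Ratner--Shah strategy for equidistribution of translates of unipotent orbit measures, broken into four pieces: non-divergence, unipotent invariance of subsequential limits, Ratner classification, and a linearization step to rule out concentration on proper subhomogeneous spaces.

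After passing to a subsequence, let $\nu$ be a weak$^*$ limit of $g_{t_n}\mu_x$ as $t_n\to\infty$; the goal is $\nu=\mu_{\overline{Gx}}$. First I would show that $\nu$ is a probability measure by invoking the Dani--Margulis / Kleinbock--Margulis quantitative non-divergence estimate, in the translates-of-orbit form of Eskin--Mozes--Shah: since $\mu_x$ is absolutely continuous and sits on a bounded piece of a unipotent orbit, one gets $(g_t\mu_x)(K)\ge 1-\epsilon$ uniformly in $t\ge 0$ for some fixed compact $K\subset L/\Gamma$.

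Next I would establish invariance of $\nu$ under $U''=U'\cap U_G^+$, which by the remark in the excerpt is itself $g_1$-expanding and in particular a nontrivial unipotent subgroup. For any fixed $u_0\in U''$, the identity
\[
u_0\,g_{t_n}\mu_x \;=\; g_{t_n}\bigl(g_{-t_n}u_0 g_{t_n}\bigr)\mu_x,
\]
combined with $g_{-t_n}u_0 g_{t_n}\to\mathbf e$ (by the definition of $U_G^+$) and the quasi-invariance of $\mu$ under small $U'$-translations (from absolute continuity with respect to Haar), yields $u_0\nu=\nu$. Ratner's measure classification then writes each ergodic component of $\nu$ as the homogeneous measure on a closed orbit of some connected subgroup of $L$ containing $U''$.

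The remaining step, which I expect to be the main obstacle, is to show that $\nu$ gives no mass to any \emph{proper} such closed orbit inside $\overline{Gx}$. For this I would invoke the Dani--Margulis--Mozes--Shah linearization: to each proper $G$-ergodic finite volume homogeneous subspace $Y\subsetneq \overline{Gx}$ one associates a representation $\rho$ of $G$ on some $V=\wedge^k\mathfrak l$ together with a $\Gamma$-rational vector $v_Y\in V$ whose $G$-stabilizer captures $Y$, so that a small neighborhood of $Y$ in $L/\Gamma$ lifts to points where $\rho(\cdot)v_Y$ lies near a distinguished closed subvariety of $V$. The $g_1$-expanding hypothesis forces the polynomial map $u\mapsto\pi_+(\rho(u)v_Y)$ to be not identically zero on $U''$; after conjugating by $g_t$ the $V^+$-component then grows exponentially in $t$, so for $\mu$-a.e.\ $u$ the point $g_t u x$ eventually leaves any fixed tubular neighborhood of $Y$. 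A covering argument in $V$, combined with a polynomial-growth non-divergence estimate along $U''$ of the type used in Kleinbock--Margulis, then produces a uniform-in-$t$ bound on the $\mu_x$-mass of any shrinking neighborhood of $FY$ for an arbitrary compact $F\subset C_L(G)$. The countable list of such $Y$ provided by Mozes--Shah then forces $\nu$ to be supported off every proper closed orbit, leaving only the homogeneous measure $\mu_{\overline{Gx}}$ and completing the proof.
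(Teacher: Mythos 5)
Your proposal is a correct sketch of a \emph{proof} of the theorem, following the standard non-divergence / unipotent-invariance / Ratner-classification / linearization pipeline going back to Dani--Margulis, Mozes--Shah, and Shah. The paper takes a genuinely shorter route: it treats the theorem as an essentially direct citation. Namely, it observes that the $g_1$-expanding hypothesis on $U'$ implies that the Zariski closure of $\rho(H)$ (for $H=\langle g_t, U'\rangle$) is an epimorphic subgroup of $\rho(G)$ for every finite-dimensional real representation $\rho$, and that the ray $\{g_t : t>0\}$ lies in the cone singled out in \cite{sw96} Lemma 2.1 for each nontrivial irreducible $\rho$; the conclusion then reads off directly from \cite{sw96} Theorem 1.4. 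The citation buys brevity and avoids redoing the linearization argument; your route is self-contained and makes explicit where $g_1$-expansion actually enters, namely in forcing the polynomial $u\mapsto\pi_+(\rho(u)v_Y)$ to be non-vanishing, which is what prevents $g_t\mu_x$ from concentrating near proper closed $G$-invariant subsets. One point you elide: $g_1$-expansion gives non-vanishing only for vectors lying in \emph{nontrivial} irreducible subrepresentations, so you must separately verify that the $\Gamma$-rational vector $v_Y$ attached to a proper $Y\subsetneq\overline{Gx}$ is not $G$-fixed --- this is exactly where density of $Gx$ in $\overline{Gx}$ is used, and it is built into the Shah--Weiss statement the paper cites rather than reproved.
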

This result is not explicitly stated in both of the papers, but  it can be derived 
 easily from the main results there as we will see. 
Let $H$ be the subgroup of $G$ generated by $\{g_t: t\in \mathbb R\}$ and $U'$.
Then the  $g_1$ expanding property implies that the Zariski closure of $\rho(H)$ is 
an epimorphic subgroup of $\rho(G)$ for any finite dimensional real representation $\rho$ of 
$G$. Furthermore the ray $\{g_t: t>0\}$ is contained in the cone of 
\cite[Lemma 2.1]{sw96}  for any
nontrivial 
irreducible representation $\rho$. Therefore Theorem \ref{thm;old} follows from \cite[Theorem 1.4]{sw96}.

Given $x\in L/\Gamma$, if 
$
\lim\limits_{T\to \infty}\frac{1}{T}\int_0^T g_tu \delta_{x} \dd t
$
exists in the space of probability measures on $L/\Gamma$ for almost every $u\in U'$, then 
 Theorem \ref{thm;old} implies that  the limit has to be $\mu_{\overline {Gx}}$
 almost surely. 
 Hence the Haar measure of $U'$ is $(g_t, \mu_{\overline{Gx}})$ generic at $x$. 
The almost surely existence of the limit can be obtained by 
Theorem \ref{thm;2} 
 if there is an abelian subgroup  subgroup $U_a$ of $U'$ 
normalized by $g_t$ and a connected semisimple subgroup $G_1$ of $G$ without compact factors 
so that the following holds:
\begin{itemize}
\item[$(*)$]
$\{g_t: t\in \mathbb R\}$ is a subgroup of
$G_1$ and 
$U_a$ is a $ g_1 $ expanding subgroup of $G_1$.  
\end{itemize}

For example, if $G$ is the real rank one group $SU(2,1)$ then
the unstable horospherical  subgroup $G^+$ is not abelian. 
But we can take $G_1$ to be a subgroup whose Lie algebra is  isomorphic to  $\mathfrak{sl}_2$.  The next lemma shows that this strategy always works   when $U'=G^+$. 
\begin{lem}\label{lem;abelian}
Under the assumption of Theorem \ref{thm;1} there is a connected
 semisimple  subgroup $G_1\subset G$ without compact factors 
and an abelian subgroup $U_a$ of $G^+$ such that property $(*)$ holds.
\end{lem}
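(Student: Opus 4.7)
The plan is to carry out the construction one simple factor at a time. Decompose $G = \prod_{i=1}^{k}G^{(i)}$ and $g_t = (g_t^{(1)},\dots,g_t^{(k)})$, with each $g_t^{(i)}$ nontrivial by the hypothesis of Theorem~\ref{thm;1}. Since $U_G^+ = \prod_i U_{G^{(i)}}^+$, and since irreducible representations of a product of real semisimple Lie groups are outer tensor products of irreducibles of the factors, the $g_1$-expanding property factors across the product. Hence it suffices to construct, for each simple factor $G^{(i)}$, a connected semisimple subgroup $G_1^{(i)} \subseteq G^{(i)}$ without compact factors containing $\{g_t^{(i)}\}$ together with an abelian subgroup $U_a^{(i)} \subseteq U_{G^{(i)}}^+$ that is $g_1$-expanding in $G_1^{(i)}$; then $G_1 := \prod_i G_1^{(i)}$ and $U_a := \prod_i U_a^{(i)}$ satisfy~$(*)$.

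For a fixed simple factor $G^{(i)}$, let $X_i \in \mathfrak{g}^{(i)}$ generate $\{g_t^{(i)}\}$ and extend $X_i$ to a maximal $\mathbb R$-split Cartan subalgebra $\mathfrak{a}^{(i)}$ of $\mathfrak{g}^{(i)}$. Choose a positive system of restricted roots with $X_i$ in the closed positive Weyl chamber, and let $\beta_i$ be the highest restricted root; since $\beta_i$ is a strictly positive integral combination of the simple roots and $X_i \neq 0$, one has $\beta_i(X_i) > 0$. The root space $\mathfrak{g}_{\beta_i}$ (or $\mathfrak{g}_{\beta_i} \oplus \mathfrak{g}_{2\beta_i}$ in the non-reduced $BC$ case, where the $2\beta_i$-summand lies in the center of $\mathfrak{u}_{G^{(i)}}^+$) is an abelian subalgebra of $\mathfrak{u}_{G^{(i)}}^+$; take $U_a^{(i)}$ to be the corresponding connected abelian subgroup. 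For $G_1^{(i)}$, in the real-rank-one case (as in the $SU(2,1)$ example singled out in the paper) I would take the rank-one semisimple subgroup whose Lie algebra is generated by $\mathfrak{g}_{\pm \beta_i}$ together with $\mathfrak{g}_{\pm 2\beta_i}$ when applicable; since $X_i$ is then automatically proportional to the coroot $H_{\beta_i}$, this subgroup contains $\{g_t^{(i)}\}$. In the general higher-real-rank case, $G_1^{(i)}$ must be chosen larger so that its Cartan contains $X_i$; a convenient choice is the semisimple subgroup of $G^{(i)}$ whose Lie algebra is generated by root spaces $\mathfrak{g}_{\pm\alpha}$ for a subset of simple roots whose coroots span a subspace of $\mathfrak{a}^{(i)}$ containing $X_i$ (in the worst case giving $G_1^{(i)} = G^{(i)}$), and $U_a^{(i)}$ must correspondingly be enlarged to a maximal abelian subgroup of $\mathfrak{u}_{G^{(i)}}^+$ built from positive root spaces and still normalized by $\{g_t^{(i)}\}$.

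The representation-theoretic verification that $U_a^{(i)}$ is $g_1$-expanding in $G_1^{(i)}$ is where the real work lies. For $G_1^{(i)} \cong SL_2(\mathbb R)$ it is essentially immediate: in any nontrivial irreducible representation $V$, repeatedly applying a nonzero raising operator $E \in \mathfrak{g}_{\beta_i}$ to any nonzero vector moves it after finitely many steps onto the highest-weight line, which lies in $V^+$ because the highest weight pairs strictly positively with $X_i$. The main obstacle, and the part of the plan requiring the most care, is the higher-rank case: after enlarging $G_1^{(i)}$ and $U_a^{(i)}$, I would need to show via highest-weight theory that for any nontrivial irrep $V$ of $G_1^{(i)}$ and any nonzero $v \in V$, some element of $U_a^{(i)}$ produces a vector with nonzero $V^+$-component. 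The expected mechanism is that the abelian subalgebra generated by the chosen positive root spaces is rich enough that some polynomial in these commuting raising operators applied to $v$ reaches the highest-weight line (which sits in $V^+$ once $X_i$ is taken in the interior of the Weyl chamber of $G_1^{(i)}$); making this airtight requires a weight-lattice argument showing that no nonzero $v$ can be simultaneously annihilated by every raising operator available in $U_a^{(i)}$.
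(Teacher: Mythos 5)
Your reduction to simple factors and your treatment of the real-rank-one case are reasonable (in rank one your construction is essentially the paper's motivating example: the highest restricted root yields a rank-one subgroup whose split Cartan is forced to contain $\log g_1$), though even the factorization step quietly uses that real irreducible representations of a product split as outer tensor products and that the expanding property passes through such a splitting, which deserves an argument. The genuine gap is the higher-real-rank case, which is the entire content of the lemma and which you leave open. Saying that ``in the worst case $G_1^{(i)}=G^{(i)}$'' and that $U_a^{(i)}$ should be a maximal abelian subgroup of $\mathfrak u_{G}^+$ built from positive root spaces simply restates the problem: when $G_1=G$ the claim becomes that some such abelian subgroup is $g_1$ expanding in $G$ itself, and you give no argument for this; moreover ``reaching the highest-weight line by commuting raising operators'' is neither generally achievable nor the right criterion. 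What is needed (cf.\ Lemma \ref{lem;characterize}) is that every nonzero $U_a$-fixed vector in every nontrivial irreducible representation of $G_1$ has nonzero projection to $V^+$, and one must build $G_1$ and $U_a$ so that this is forced.

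The paper's proof rests on a combinatorial statement that your proposal has no substitute for, namely Lemma \ref{lem;root}: writing $g_t=\exp(tv)$ with $v$ dominant for a positive system of the restricted roots, there exist positive roots $\beta_1,\dots,\beta_n$ such that (i) $\beta_i+\beta_j$ is never a root, so chosen root vectors $w_i$ commute and generate an abelian subgroup $U_a\subset U_G^+$, and (ii) $\alpha_v$ is a nonnegative combination of the $\beta_i$, equivalently $v$ is a nonnegative combination of the coroots $v_i$ of $\mathfrak{sl}_2$-triples $(v_i,w_i,w_i^-)$. Taking $G_1$ to be the group generated by these triples, $\{g_t\}\subset G_1$ is immediate from (ii), and the expanding property follows from $\mathfrak{sl}_2$-theory via Lemma \ref{lem;characterize}: a $U_a$-fixed vector has nonnegative weight for each $v_i$, and weight zero for all $i$ would make it $G_1$-fixed, contradicting nontriviality. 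The existence statement (i)+(ii) is the hard part: the paper proves it case by case over the irreducible root systems, invoking Oh's strongly orthogonal systems for types $B$, $C$, $E_7$, $E_8$, $F_4$, $G_2$ and even $D_n$, and giving explicit constructions for $A_n$, odd $D_n$ and $E_6$. The ``weight-lattice argument'' you defer to at the end is exactly this missing ingredient, so as it stands the proposal does not prove the lemma in higher rank.
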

The proof of this lemma uses  strongly orthogonal system of simple 
root systems
and will be given in the appendix.  
Lemma \ref{lem;abelian} together with 
Theorem \ref{thm;2} and  Theorem \ref{thm;old}  proves
Theorem \ref{thm;1}.

\section{Some auxiliary results}
\subsection{Large deviation}
In this section we prove a large deviation result. 
Our argument is inspired by 
\cite{bq13} and \cite{a}.

Let $(W, \mathcal B, \mu)$ be a standard Borel  space with probability measure $\mu$. 
The  conditional expectation of 
a  nonnegative   Random variable 
$\xi$ (i.e.~a measurable  map $\xi: W\to [0, \infty]$)\footnote{Although we allow 
	functions take the value $\infty$, we always assume they are not  $\infty$ 
	almost surely.}
with respect to   
  a sub sigma algebra
$\mathcal F$ of $\mathcal B$     is an $\mathcal F$ measurable function 
$E(\xi| \mathcal F)$  such that for any $A\in \mathcal F$ one has
$\int_A\xi(w)\dd \mu(w)=\int_AE(\xi| \mathcal F)(w)\dd \mu(w)$.
The conditional probability of $A\in \mathcal B$  is  the function
$
\mu(A|\mathcal F):=E(\mathbbm 1_A|\mathcal F)
$ 
where $ \mathbbm 1 _A$ is the characteristic function of $A$. 
For a nonnegative  random variable $\xi$ and $a\in \mathbb R$
we will follow the convention of probability theory to write 
$\mu(\xi \ge a)$ for $\mu(\{w\in W: \xi(w)\ge a\})$ and $E(\xi)$
for $\int _W \xi (w) \dd w$.

In this paper the set of natural numbers  $\mathbb N=\{0, 1, 2, \ldots\}$.
A measurable map $\xi: W \to { \mathbb N}\cup \{\infty\}$
  is called  ${\mathbb N}$ valued random variable. 
A sequence of random variables $(\xi_i)_{i\in \mathbb N}$ is said to be  
increasing if $\xi_i(w)\ge \xi_{i-1}(w)$ for all $i\ge 1$ and almost every $w\in W$. 
A sequence of   sub sigma algebras  $(\mathcal F_i)_{i\in \mathbb N}$
of  $\mathcal B$ is said to 
be a filtration if 
$\mathcal F_{i-1}\subseteq \mathcal F_{i}$.
In the rest of  this section the relations $=$ and $\le $ for functions on $W$ are meant
to hold almost surely.

\begin{lem}\label{lem;main}
Let $(\xi_i)_{i\in \mathbb N}$ be an  increasing sequence  of
${\mathbb N}$ valued  Random variables  on $W$.  Let $(\mathcal F_i)_{i\in \mathbb N}$ be a sequence of 
filtrations of sub sigma algebras of $\mathcal B$ such that $\xi_i$ is $\mathcal F_i$ measurable. 
Suppose that there exits $\vartheta_0>0$ and $C_0 \ge 1$ such that  
\begin{equation}\label{eq;basic exp}
\mu(\xi_{i}-\xi_{i-1}\ge  q|\mathcal F_{i-1})\le C_0 e^{-q\vartheta_0 }
\end{equation}
for all  $q,i\ge 1$.
Then  for any $\varepsilon>0$
there exist positive numbers   $Q $ and $\vartheta$ such that for 
every  positive integer  $n$  we have
\begin{equation}\label{eq;goaldev}
\mu\left( \frac{1}{n}\sum_{i=1}^n \mathbbm 1 _Q ( \xi_{i}(w)-\xi_{i-1}(w))\ge \varepsilon\right)
\le e^{-\vartheta n}
\end{equation}
where 
$ \mathbbm 1 _Q: {\mathbb N}\to  {\mathbb N} $ is defined by \begin{equation}\label{eq;truncation}
 \mathbbm 1 _Q(q)=\left\{
\begin{array}{cl}
q & \mbox{ if } q\ge Q \\
0 & \mbox{otherwise.}
\end{array}
\right.
\end{equation}
\end{lem}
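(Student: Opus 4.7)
The plan is to run a Chernoff--type exponential Markov argument along the filtration $(\mathcal A_i)$. Set $X_i := \tau_i - \tau_{i-1}$, $\xi_i := \mathbbm 1_Q(X_i)$, and $S_n := \sum_{i=1}^n \xi_i$. For any $\lambda>0$,
\[
\mu\bigl(S_n \ge \epsilon n\bigr) \le e^{-\lambda \epsilon n}\, E\bigl(e^{\lambda S_n}\bigr),
\]
so everything reduces to controlling $E(e^{\lambda S_n})$ by iterating conditional expectations.

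The key step is to bound the conditional Laplace transform $E(e^{\lambda \xi_i}\mid \mathcal A_{i-1})$. Since $\xi_i$ equals $0$ on $\{X_i<Q\}$ and $X_i$ on $\{X_i \ge Q\}$, splitting and applying Abel summation to the $\mathbb N$-valued distribution of $X_i$ gives, for $Q\ge Q_0$,
\[
E(e^{\lambda \xi_i}\mid \mathcal A_{i-1})
=1 - \mu(X_i \ge Q\mid \mathcal A_{i-1}) + \sum_{q\ge Q} e^{\lambda q}\,\mu(X_i=q\mid \mathcal A_{i-1}).
\]
Using the hypothesis $\mu(X_i \ge q\mid \mathcal A_{i-1})\le e^{-\vartheta_0 q}$ and summing the geometric series, for any fixed $\lambda\in(0,\vartheta_0)$ one obtains
\[
E(e^{\lambda \xi_i}\mid \mathcal A_{i-1}) \le 1 + C(\lambda,\vartheta_0)\,e^{(\lambda-\vartheta_0)Q},
\]
and hence $E(e^{\lambda\xi_i}\mid \mathcal A_{i-1})\le \exp\bigl(C(\lambda,\vartheta_0)e^{(\lambda-\vartheta_0)Q}\bigr)$. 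The measurability assumption $\tau_i\in \mathcal A_i$ makes $S_{n-1}$ measurable with respect to $\mathcal A_{n-1}$, so by the tower property
\[
E\bigl(e^{\lambda S_n}\bigr) = E\bigl(e^{\lambda S_{n-1}}\, E(e^{\lambda\xi_n}\mid\mathcal A_{n-1})\bigr) \le \exp\bigl(C(\lambda,\vartheta_0)e^{(\lambda-\vartheta_0)Q}\bigr)\, E\bigl(e^{\lambda S_{n-1}}\bigr),
\]
and inductively $E(e^{\lambda S_n}) \le \exp\bigl(n\,C(\lambda,\vartheta_0)e^{(\lambda-\vartheta_0)Q}\bigr)$.

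Substituting into the Markov bound gives
\[
\mu\bigl(S_n\ge \epsilon n\bigr) \le \exp\Bigl(n\bigl[C(\lambda,\vartheta_0)e^{(\lambda-\vartheta_0)Q} - \lambda\epsilon\bigr]\Bigr).
\]
Fix any $\lambda\in(0,\vartheta_0)$; since $\lambda<\vartheta_0$, one can choose $Q$ large enough that $C(\lambda,\vartheta_0)e^{(\lambda-\vartheta_0)Q}\le \lambda\epsilon/2$, yielding \eqref{eq;goaldev} with $\vartheta:=\lambda\epsilon/2$.

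The main technical point is that one really needs the truncation $\mathbbm 1_Q$, not just the identity, in order for the per-step MGF to be of the form $1+o_Q(1)$ rather than a constant larger than $1$; only in the former case does raising to the $n$-th power still leave room for the $e^{-\lambda\epsilon n}$ from Markov to dominate. This is why the bound $e^{(\lambda-\vartheta_0)Q}$ must be combined with a fixed $\lambda$ strictly less than $\vartheta_0$, and why one must let $Q\to\infty$; the interaction of these three parameters (choose $\lambda$ first, then $Q$ large depending on $\lambda$ and $\epsilon$) is the only delicate part of the argument.
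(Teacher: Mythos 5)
Your argument is correct and is essentially the same Chernoff/exponential-moment argument the paper gives: you both bound the conditional Laplace transform of the truncated increment via the geometric tail hypothesis, iterate through the filtration by the tower property, and apply Markov's inequality, with the key parameter interplay being a fixed $\lambda\in(0,\vartheta_0)$ (the paper picks $\lambda=\vartheta_0/2$) and then $Q$ large. The only cosmetic difference is that you invoke Abel summation to handle $\sum_{q\ge Q}e^{\lambda q}\mu(X_i=q\mid\mathcal A_{i-1})$, whereas the paper bounds more crudely via $\mu(X_i=q\mid\mathcal A_{i-1})\le\mu(X_i\ge q\mid\mathcal A_{i-1})\le e^{-\vartheta_0 q}$ directly, which is slightly simpler and avoids the summation-by-parts.
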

Remark:
It can be seen from the proof below that $Q$ and $\vartheta$ only depend on $C_0,\varepsilon$ and $\vartheta_0$; but they do  not depend  
on 
the probability space, 
the sequence of random variables or the filtration of sigma algebras. 

\begin{proof}
We will show that  (\ref{eq;goaldev}) holds for   
\begin{equation}\label{eq;choiceQ}
\vartheta=\frac{\varepsilon\vartheta_0}{4}\quad \mbox{and}\quad Q\ge 
\frac{2\log[C_0^{-1}(e^{\varepsilon\vartheta_0/4}-1)(1-e^{-\vartheta_0/2})]}
{-\vartheta_0}.
\end{equation}

For every positive integer  $n$ we define a function $f_n$ on
$W $ by
 \[
 f_n(w)=\exp\left (\frac{\vartheta_0}{2}{\sum_{i=1}^n ( \mathbbm 1 _Q  ( \xi_{i}(w)-\xi_{i-1}(w))} \right).
 \]
 Since $f_{n-1}$ is $\mathcal F_{n-1}$ measurable, we have 
 $$E(f_n|\mathcal F_{n-1})
 =f_{n-1}E\left (\exp\Big(\frac{\vartheta_0}{2} \mathbbm 1 _Q  ( \xi_{n}(w)-\xi_{n-1}(w))\Big)|\mathcal F_{n-1}\right).$$
So by the  monotone convergence theorem and (\ref{eq;basic exp}),  we have
 \begin{eqnarray*}
E(f_n|\mathcal F_{n-1}) & \le &  f_{n-1}\left [1+
\mathcal \sum_{q\ge Q} e^{\vartheta_0 q/2}
\mu (\xi_n-\xi_{n-1}=q|\mathcal F_{n-1})
\right] \\
 & \le & f_{n-1}\frac{1-e^{-\vartheta_0/2}+C_0e^{-Q\vartheta_0/2}}{1-e^{-\vartheta_0/2}}.
 \end{eqnarray*}
An induction on $n$ gives
 \[
E(f_n)\le \left (\frac{1-e^{-\vartheta_0/2}+C_0e^{-Q\vartheta_0/2}}{1-e^{-\vartheta_0/2}}
\right )^n.
 \]
On the other hand by Chebyshev inequality 
\[
E(f_n)\ge
 e^{\varepsilon n\vartheta_0/2}\mu \left(\frac{1}{n}\sum_{i=1}^n \mathbbm 1 _Q  ( \xi_{n}-\xi_{n-1})\ge\varepsilon
\right).
\]
Therefore 
\begin{equation}\label{eq;derivation}
\mu\left(\frac{1}{n}\sum_{i=1}^n \mathbbm 1 _Q  ( \xi_{n}-\xi_{n-1})\ge \varepsilon
\right)
\le \left(\frac{1-e^{-\vartheta_0/2}+C_0e^{-Q\vartheta_0/2}}
{e^{\varepsilon \vartheta_0/2}(1-e^{-\vartheta_0/2})}
\right)^n.
\end{equation}
So (\ref{eq;goaldev}) follows from   (\ref{eq;choiceQ}) 
and (\ref{eq;derivation}).
\end{proof}

\subsection{unipotent invariance}
The aim of this section is to prove Proposition \ref{prop;inv}. 
Our argument is modeled on  \cite[\S 3]{ce}. 
Let $L, \Gamma, G, g_t, U,x$ be   as  in Theorem \ref{thm;proof}.

We observe that there exists a countable
dense subset of $C_c(L/\Gamma)$  consisting of  smooth functions.
   Also if $s_1, s_2\in \R$ are linearly independent over 
$\mathbb Q$, then the closure of the group generated by  $u(s_1\mathbf e_j), u(s_2\mathbf e_j)$ $(1\le j\le m)$ is $U$. 
Therefore, Proposition \ref{prop;inv} will follow if  we can show that given any 
 $\psi \in C_c^\infty(L/\Gamma), s>0$ and $1\le i\le m$
we have for almost every $w\in I^m$
\begin{equation}\label{eq: goal}
\lim_{T\to \infty}\frac{1}{T}\int_0^T \psi_t(w) \dd w\to 0
\end{equation}
where 
\[
\psi_t(w)  =  \psi
\big (g_tu(w)x\big)-\psi\big(u(s\mathbf e_i)g_tu(w)x\big).
\]

We will prove (\ref{eq: goal}) using the   law of large numbers. The key is the the 
following effective  estimate of correlations. 
\begin{lem}\label{lem;decay}
There exists $\vartheta>0$ and $C\ge 1$ such that for any $t,l>0$
\begin{equation}\label{eq: cross}
\left |\int_{I^m}\psi_t(w)\psi_l(w)\dd w  \right| \le C e^{-\vartheta |l-t|}.
\end{equation}
\end{lem}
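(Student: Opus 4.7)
The plan is to use the commutation relation (\ref{eq;Recall}) together with the abelianness of $U$ to rewrite $\psi_t$ as a finite difference in $w$ at an exponentially small scale, and then to transfer one of the two differences between $\psi_t$ and $\psi_l$ by a change of variables. Concretely, since $U$ is abelian, (\ref{eq;Recall}) gives $u(\delta \mathbf e_i)\, g_t u(w) x = g_t u(w + e^{-t b_i}\delta\, \mathbf e_i) x$ for every $\delta\in\mathbb R$. Taking $\delta = s$ and writing $\sigma_t := s\, e^{-t b_i}$ one obtains
\[
\psi_t(w) = \psi[g_t u(w) x] - \psi[g_t u(w + \sigma_t \mathbf e_i) x],
\]
exhibiting $\psi_t$ as a finite difference at scale $\sigma_t$, and in particular $|\psi_t|\le 2\|\psi\|_\infty$.

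Without loss of generality assume $l\ge t$. Expanding $\psi_l$ in the same form and applying the substitution $w \mapsto w - \sigma_l \mathbf e_i$ in the second summand---whose effect on the domain $I^m$ contributes only a boundary error of Lebesgue measure $O(\sigma_l)$---yields
\[
\int_{I^m} \psi_t(w)\,\psi_l(w)\,dw = \int_{I^m} \bigl[\psi_t(w) - \psi_t(w - \sigma_l \mathbf e_i)\bigr]\, \psi[g_l u(w) x]\,dw + \mathcal E,
\]
with $|\mathcal E|\le C_0\|\psi\|_\infty^2\, \sigma_l$. I then control the finite difference of $\psi_t$ by the mean value theorem. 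Using (\ref{eq;Recall}) and the abelianness of $U$ again, $g_t u(w + \delta\mathbf e_i) x = u(e^{t b_i}\delta\, \mathbf e_i)\, g_t u(w) x$, so the derivative of $w \mapsto \psi(g_t u(w) x)$ in the direction $\mathbf e_i$ equals $e^{t b_i}$ times the directional derivative of $\psi$ along the $U$-action in the direction $\mathbf e_i$, evaluated at $g_t u(w) x$. Since $\psi$ is smooth and compactly supported, both summands of $\psi_t$ have $\mathbf e_i$-derivative bounded by $C_1(\psi)\, e^{t b_i}$, whence
\[
|\psi_t(w) - \psi_t(w - \sigma_l \mathbf e_i)|\le 2 C_1(\psi)\,\sigma_l\, e^{t b_i} = 2 C_1(\psi)\, s\, e^{-(l-t) b_i}.
\]

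Inserting this estimate produces $\bigl|\int_{I^m}\psi_t\psi_l\,dw\bigr| \le C' \bigl(e^{-(l-t)b_i} + \|\psi\|_\infty e^{-l b_i}\bigr)$, which for $t\ge 0$ and $l\ge t$ is at most $C\, e^{-(l-t)b_i}$ since $e^{-l b_i}\le e^{-(l-t) b_i}$. The case $l \le t$ is obtained by the symmetric argument, shifting in the $\psi_t$-factor instead and swapping the roles of $t$ and $l$, which yields the same bound with $l-t$ replaced by $t-l$. Together these give (\ref{eq: cross}) with $\vartheta := b_i > 0$. The point where care is required---and the reason, as remarked before the lemma, that the abelian hypothesis is hard to dispense with---is that abelianness enters twice: once to identify $u(s\mathbf e_i)g_t u(w)$ with $g_t u(w+\sigma_t\mathbf e_i)$ and thereby turn $\psi_t$ into a translation difference in $w$, and again to get the clean chain rule $\partial_{w_i}\psi(g_t u(w) x) = e^{t b_i}(\partial_{\mathbf e_i}\psi)(g_t u(w) x)$ with the \emph{one} exponential factor $e^{tb_i}$; in the non-abelian case the conjugation $u(w)u(\delta\mathbf e_i)u(w)^{-1}$ would produce additional terms in other root directions, and the shifting argument would no longer directly close.
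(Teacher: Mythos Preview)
Your proof is correct and rests on the same mechanism as the paper's: use the abelian commutation relation to write $\psi_t$ and $\psi_l$ as finite differences in $w$ at scales $\sigma_t=se^{-tb_i}$ and $\sigma_l=se^{-lb_i}$, then transfer the scale-$\sigma_l$ difference from $\psi_l$ onto $\psi_t$ and bound the resulting variation of $\psi_t$ using the smoothness of $\psi$. The paper implements the transfer by partitioning $[-1,1]$ into subintervals of the intermediate length $2e^{-(l+t)b_i/2}$, freezing $\psi_t$ on each (Lipschitz error $\ll e^{-(l-t)b_i/2}$) and telescoping the integral of $\psi_l$, obtaining $\vartheta=b_i/2$; your direct change-of-variables followed by the mean value theorem is a slightly cleaner realization of the same idea and even yields the sharper exponent $\vartheta=b_i$.
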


Lemma \ref{lem;decay} allows us to use the following lemma  to complete the proof of 
 (\ref{eq: goal}) and hence Proposition \ref{prop;inv}.

\begin{lem}[\cite{ce} Lemma 3.4]
Suppose that $\psi_t:I^m\to \mathbb R$ are bounded functions satisfying (\ref{eq: cross}) (for some $C\ge 1$ and $\vartheta>0$).  Additionally,  assume that
there exists $C_1\ge 1$ such that  $\psi_t(w)$ are $C_1$-Lipschitz functions of $t$ for each $w\in I^m$. Then
 (\ref{eq: goal}) holds
 for almost 
every $w\in I^m$.
\end{lem}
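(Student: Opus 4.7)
The plan is to apply the classical second-moment method to a sparse discrete subsequence and then interpolate to all $T$ using the $t$-Lipschitz regularity. Set $X_T(w) := \int_0^T \psi_t(w)\,dt$; the goal is to show $X_T(w)/T \to 0$ for almost every $w \in I^m$.

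First I would bound the $L^2(I^m)$ norm of $X_T$. By Fubini and the decorrelation estimate (\ref{eq: cross}),
\begin{equation*}
\int_{I^m} X_T(w)^2\,dw \;=\; \int_0^T\!\!\int_0^T \int_{I^m} \psi_t(w)\psi_l(w)\,dw\,dt\,dl \;\le\; C\int_0^T\!\!\int_0^T e^{-\vartheta|l-t|}\,dt\,dl \;\le\; \frac{2C}{\vartheta}\,T,
\end{equation*}
so Chebyshev's inequality yields $\bigl|\{w\in I^m : |X_T(w)| \ge \epsilon T\}\bigr| \le 2C/(\vartheta \epsilon^2 T)$ for every $\epsilon > 0$.

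Next I would apply Borel--Cantelli along the sparse subsequence $T_k := k^2$. Since $\sum_k T_k^{-1} = \sum_k k^{-2} < \infty$, for each fixed $\epsilon > 0$ the set of $w$ for which $|X_{T_k}(w)| \ge \epsilon T_k$ happens infinitely often has measure zero. Taking the intersection of the resulting full-measure sets as $\epsilon$ ranges over a sequence tending to $0$ produces a full-measure set on which $X_{T_k}(w)/T_k \to 0$.

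The last step is to pass from the subsequence $T_k$ to arbitrary $T$. For $T_k \le T < T_{k+1}$ the Lipschitz-in-$t$ hypothesis (together with the pointwise boundedness of $\psi_{T_k}$) gives a local uniform bound $\sup_{t\in[T_k,T_{k+1}]}|\psi_t(w)| \le M_k(w)$; in the concrete situation $\psi_t(w) = \psi(g_tu(w)x) - \psi(u(s\mathbf{e}_i)g_tu(w)x)$ with $\psi\in C_c^\infty(L/\Gamma)$, one has the uniform bound $M_k(w) \le 2\|\psi\|_\infty$. Since $T_{k+1}-T_k = 2k+1 = O(\sqrt{T_k})$, the remainder satisfies $|X_T - X_{T_k}|/T \le M_k(w)(T_{k+1}-T_k)/T_k = O(1/\sqrt{T_k}) \to 0$, which combined with the previous step yields $X_T(w)/T \to 0$ almost surely. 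The step I would expect to be most delicate is this last interpolation: in a purely abstract setting the word ``bounded'' might mean bounded only for each fixed $t$, in which case one has to use Lipschitzness in $t$ to propagate the bound across the windows $[T_k, T_{k+1}]$ and perhaps pick the subsequence more carefully; in our application this subtlety is invisible because uniform boundedness is automatic from the compact support of $\psi$.
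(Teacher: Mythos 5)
Your proof is correct and follows essentially the same route as the one in Chaika--Eskin (and the author's own commented-out draft in the source): a second-moment estimate via the exponential decorrelation, Chebyshev, Borel--Cantelli along the quadratic subsequence $T_k=k^2$, and interpolation to all $T$. The only cosmetic difference is that you work with the continuous integral $X_T$ throughout, whereas the paper's draft first discretizes the time average into a Riemann sum (using Lipschitz-in-$t$) before running the same scheme; both are fine, and your closing remark about what ``bounded'' must mean for the interpolation step to close is exactly the right thing to flag.
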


\begin{proof}[Proof of Lemma \ref{lem;decay}]

 We fix a right invariant Riemannian metric on $L$
and let $d(\cdot, \cdot)$ be the induced  distance function.
We note that the function  $\psi $ is Lipschitz, 
i.e.~$|\psi(gy)-\psi(hy)|\ll d(g, h) $ for any $g, h\in L$ and $y\in L/\Gamma$. 
So there exists $C_1\ge 1$, such that $\psi_t(w)\ (w\in W)$ are $C_1$-Lipschitz functions of $t$.  

Without loss of generality we assume that $l>t$ and $i=1$.
 Let 
 $b=b_1>0$ which is defined in the beginning of \S \ref{s;outline}, i.e. 
$
 {g_1}u(\mathbf e_1) g_1^{-1}=u(e^b\mathbf e_1).
$
Then 
\[
\psi_t(w)=\psi\big(g_tu(w)x\big)-\psi\big ( g_tu(w+se^{-bt }\mathbf e_1)x\big ).
\]
We will show that for $\vartheta=b/2$ there exists $C\ge 1$ so that (\ref{eq: cross}) holds.

 We divide $[-1, 1]$ consecutively  into intervals of 
the form 
\[
I(r)=[r-e^{-(l+t)b/2}, r+e^{-(l+t)b/2}]
\]
except for the last part which will not affect the validity of (\ref{eq: cross})
since it has length  
less than $2e^{-(l+t)b/2}$.
For every $s_1\in \mathbb R$ with $|s_1|\le e^{-(l+t)b/2}$
we have
 \[
d(g_tu(s_1 \mathbf e_1),g_t)=d(u(e^{bt}s_1\mathbf e_1), 1_L)\ll e^{-(l-t)b/2}.
\]
As noted above the function $\psi$ is Lipschitz,
so for every 
$s_1\in I(r)$ and 
$w\in \{0\}\times I^{m-1}$
 one has
\[
|\psi_t(s_1\mathbf e_1+w)-\psi_t(r\mathbf e_1+w)|\ll e^{-(l-t)b/2}.
\]
Therefore for any $w\in \{0\}\times I^{m-1}$
 \begin{eqnarray}\label{eq;inv1}
  & &\frac{1}{|I(r)|}\int_{I(r)} \psi_l(s_1\mathbf e_1+w)\psi_t(s_1\mathbf e_1+w)\, ds_1  \\
 & =&\frac{\psi_t(r\mathbf e_1+w)}{|I(r)|}
\int_{I(r)}\psi_l (s_1\mathbf e_1+w)\, ds_1 + O(e^{-(l-t)b/2}).\notag
\end{eqnarray}
Since the interval $I(r)$ and $I(r)+se^{-bl}$ have overlaps except for their ends whose   length 
are $se^{-bl}$,
we have 
\begin{equation}\label{eq;inv2}
\frac{1}{|I(r)|}
\left|\int_{I(r)}\psi_l (s_1\mathbf e_1+w)\, ds_1  \right| \ll 2se^{-(l-t)b/2}.
\end{equation}
For any $w\in \{0\}\times I^{m-1}$,
\begin{align}\label{eq;toolong}
\int_{I}\psi_t(s_1\mathbf e_1+w)\psi_l(s_1\mathbf e_1+w)\dd s_1=\sum_{I(r)} \int _{I(r)} 
\psi_t\psi_l\dd s_1+O(e^{-(l-t)b/2}),
\end{align}
where the sum is  over a covering of $[-1,1]$ by consecutive intervals of the form $I(r)$ except for the ends.  Then 
(\ref{eq;inv1}), (\ref{eq;inv2}) and (\ref{eq;toolong}) imply   
\[ 
\left|\int_I \psi_t(s_1\mathbf e_1+w)\psi_l(s_1\mathbf e_1+w)\dd s_1\right|
\ll e^{-(l-t)b/2}
\]
for all $w\in \{0\}\times I^{m-1}$. 
So  (\ref{eq: cross}) follows from this estimate and the
 Fubini theorem. 
\end{proof}

\subsection{Linear representations}
Let $ G, g_t, U$ be   as  in Theorem \ref{thm;proof}.
 The main result of this section is the following lemma about uniform expanding property.

 \begin{lem}\label{lem;estimate}
Let $V$  be a finite dimensional  representation of 
$G$ with norm $\|\cdot\|$. Suppose that    $V$ has no nonzero  $G$-fixed vectors.  
 Then there exist  positive real numbers 
 $\lambda_0,\vartheta_0$ with  the following properties:
  for every $0<\vartheta<\vartheta_0$ there exits $T_\vartheta>0$ such that   
if 
  $\kappa: I^m\to \mathbb R_{\ge 0}$ is a  measurable function    with
 $\inf_{w\in I^m} \kappa(w)\ge t\ge T_\vartheta$ one has 
  \begin{equation}\label{eq;goal estimate}
\sup _{ \|v\|=1}\int_{I^m} \frac{\dd w}{\|g_{\kappa(w)} u(w)v\|^\vartheta}\le  e^{-\lambda_0\vartheta  t}.
 \end{equation}
\end{lem}

Recall that   
 $V=V^+\oplus V^0\oplus V^-$ be the decomposition  according to the  eigenvalues
of $g_1$  and  $ \pi_+: V\to V^+$ be the  projection map. 
 For every $v\in V, r>0$
we set 
\[
D^+(v, r)=\{w\in I^m: \|\pi_+(u(w)v)\|\le r\}.
\]

\begin{lem}\label{lem;good}
Let $V$ be as in Lemma \ref{lem;estimate}.
Then there exists $\vartheta_0>0$ such that
\begin{equation}\label{eq;uniform}
C:=\sup _{ \|v\|=1, r>0}\frac{|D^+(v,r)|}{r^{\vartheta_0}}<\infty.
\end{equation}
\end{lem}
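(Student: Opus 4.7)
The plan is to exploit the fact that $w\mapsto u(w)v$ is a polynomial map in $w$ (because $U$ is abelian and unipotent), combine the $g_1$-expanding hypothesis with a compactness argument on the unit sphere of $V$ to get a positive uniform lower bound on $\sup_{w\in I^m}\|\pi_+(u(w)v)\|$, and then invoke the Kleinbock--Margulis $(C,\alpha)$-good estimate for polynomials to translate this into the required measure bound on $D^+(v,r)$.

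More concretely, since $U$ is abelian and unipotent, I would write $u(w)=\exp(\sum_i w_i X_i)$ with commuting nilpotent $X_i\in\mathrm{Lie}(U)$ (compatible with (\ref{eq;isomorphism})), so in any representation $\rho:G\to GL(V)$ the map $w\mapsto\rho(u(w))v=\sum_{k\ge 0}\frac{1}{k!}(\sum_i w_i\rho(X_i))^k v$ is polynomial of degree at most $N:=\dim V$. Consequently each coordinate of $\pi_+(u(w)v)$ in a fixed basis of $V^+$ is a polynomial in $w$ of degree $\le N$, and $\|\pi_+(u(w)v)\|^2$ is a polynomial of degree $\le 2N$.

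Next I would set $P(v):=\sup_{w\in I^m}\|\pi_+(u(w)v)\|$, which is continuous in $v$, and show $P(v)>0$ for all nonzero $v$. Decompose $V=\bigoplus_j V_j$ into $G$-irreducible summands; each $V_j$ is nontrivial (since $V$ has no nonzero $G$-fixed vector), each $V_j$ is $g_1$-invariant (so $V^+=\bigoplus_j V_j^+$ with $V_j^+=V_j\cap V^+$), and when $v=\sum v_j$ the $V_j^+$-component of $\pi_+(u(w)v)$ is exactly $\pi_+(u(w)v_j)$. If $v\neq 0$ some $v_j\neq 0$, and the $g_1$-expanding hypothesis applied to the irreducible $V_j$ forces $w\mapsto\pi_+(u(w)v_j)$ to be a nonzero polynomial, hence $P(v)>0$. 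By continuity and compactness, $c_0:=\inf_{\|v\|=1}P(v)>0$.

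To conclude, I would invoke the Kleinbock--Margulis $(C,\alpha)$-good property: there exist $C_0=C_0(m,N)>0$ and $\alpha=\alpha(m,N)>0$ such that for every real polynomial $p$ on $\mathbb R^m$ of degree $\le 2N$ and every $\epsilon>0$,
\[
\big|\{w\in I^m:|p(w)|\le\epsilon\sup_{I^m}|p|\}\big|\le C_0\,\epsilon^{\alpha}\,|I^m|.
\]
Applied to $p(w)=\|\pi_+(u(w)v)\|^2$ with $\|v\|=1$ and $\epsilon=r^2/P(v)^2\le r^2/c_0^2$ (for $r\le c_0$), this yields $|D^+(v,r)|\ll (r/c_0)^{2\alpha}$; for $r\ge c_0$ one uses the trivial bound $|D^+(v,r)|\le|I^m|$. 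Setting $\vartheta_0:=\min(1,2\alpha)$ then gives the desired uniform inequality. The only substantive point in this plan is the reduction to $G$-irreducible summands, which is where the representation-theoretic hypothesis (no $G$-fixed vectors) meets the dynamical one ($g_1$-expanding); once $c_0>0$ is in hand the polynomial sublevel estimate is classical, and the abelianness of $U$ is used exactly in identifying $u(w)v$ as a polynomial of degree bounded independently of $v$.
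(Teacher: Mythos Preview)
Your proposal is correct and follows essentially the same approach as the paper's proof: both identify $w\mapsto\pi_+(u(w)v)$ as a polynomial map of uniformly bounded degree, use the $g_1$-expanding hypothesis together with compactness of the unit sphere to obtain a uniform positive lower bound (the paper phrases this as a lower bound on the maximum coefficient, you phrase it as $c_0=\inf_{\|v\|=1}P(v)>0$, which are equivalent for polynomials of bounded degree on $I^m$), and then invoke the $(C,\alpha)$-good property of polynomials from \cite{bkm01}. Your version is more explicit in two places---the reduction to $G$-irreducible summands (needed because the $g_1$-expanding definition is stated only for irreducible representations) and the device of applying the sublevel estimate to the scalar polynomial $\|\pi_+(u(w)v)\|^2$---but these are elaborations of steps the paper leaves implicit rather than a different route.
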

\begin{proof}
Recall that   $U$ is  assumed to be $ g_1 $ expanding in $G$. 
When $v$ varies in the unit sphere of $V$ the family of  maps  which send
$w\in I^m$ to $ \pi_+(u(w)v)$ are polynomials in $w$ with  degree uniformly bounded from above and
maximum of absolute values of  coefficients uniformly bounded  from below
by some positive constant.  So
the lemma follows from the $(C, \alpha )$-good property of polynomial functions  in 
\cite[Lemma 3.2]{bkm01} .
\end{proof}

\begin{proof}[Proof of Lemma \ref{lem;estimate}]
The  proof here  is the  same as   \cite[Lemma 5.1]{emm98}. 
We take  $\vartheta_0>0$ so that (\ref{eq;uniform}) holds. 
Let $b>0$ so that 
$e^{b}$ is the smallest eigenvalue of $g_1$ in $V^+$.
 We will show that the lemma holds    for this $\vartheta_0$ and $\lambda_0=\frac{b}{2}$. 

As different norms on the    finite dimensional vector space $V$ are equivalent,  there exists $C_1\ge 1$ such that for every vector $v\in V$ and $t\ge 0$ one has
$e^{bt} \|\pi_+(v)\| \le C_1\|g_t v\|$. 
Let $C$
 be the constant in (\ref{eq;uniform}) and let
 \[
 r=\sup_{\|v\|=1, w\in I^m} \|\pi_+(u(w)v)\|.
 \]
 Given a positive real number $\vartheta<\vartheta_0$, we choose    $T_\vartheta>0$ such that 
 \begin{equation}\label{eq;esti aug1}
\frac{2^{\vartheta_0}CC_1^{\vartheta_0}r^{\vartheta_0-\vartheta}}{1-2^{\vartheta-\vartheta_0}}e^{-b\vartheta T_\vartheta/2} \le 1. 
 \end{equation}
We show that (\ref{eq;goal estimate}) holds for $\kappa$ with $\inf \kappa \ge t\ge T_\vartheta$.

We fix a unit vector $v\in V$ and estimate the integral of 
\[
{f_{\kappa,v}(w)=\|g_{\kappa(w)}u(w)v\|^{-\vartheta}}.
\]
Since  
  $
\|g_{\kappa(w)}u(w)v\|\ge C_1^{-1} e^{bt}\| \pi_+(u(w)v)\|
$
for all $w\in I^m$, 
one has   
\begin{equation}\label{eq;simplified}
f_{\kappa,v}(w)\le C_1^{\vartheta_0}e^{-bt\vartheta}\| \pi_+(u(w)v)\|^{-\vartheta}.
\end{equation}
For every nonnegative integer $n$, 
 (\ref{eq;uniform}) and (\ref{eq;simplified}) imply 
\begin{equation}\label{eq;sieve}
\int\limits_{ D^+(v,2^{-n}r)\setminus D^+(v,2^{-n-1}r)}f_{\kappa,v}(w) \dd w\le e^{-bt\vartheta}2^{\vartheta_0}CC_1^{\vartheta_0}r^{\vartheta_0-\vartheta}2^{-n(\vartheta_0-\vartheta)}.
\end{equation}
We write 
\[
I^m=D^+(v, 0)\cup\Big(\cup_{n\ge 0}\big( D^+(v,2^{-n}r)\setminus
 D^+(v, 2^{-n-1}r)\big)\Big).
\]
Since $|D^+(v, 0)|=0$, we have
\begin{eqnarray*}
\int\limits_{I^m}f_{\kappa,v}(w) \dd w  & = &\sum_{n=0}^\infty\int\limits_
{ D^+(v,2^{-n}r)\setminus  D^+(v, 2^{-n-1}r)}f_{\kappa, v}(w) \dd w \\
\mbox{by (\ref{eq;sieve})}\qquad&\le & 
\frac{2^{\vartheta_0}CC_1^{\vartheta_0}r^{\vartheta_0-\vartheta}}{1-2^{\vartheta-\vartheta_0}}e^{-bt\vartheta} \\
\mbox{by (\ref{eq;esti aug1})}\qquad & \le  & e^{-\lambda_0\vartheta  t}.
\end{eqnarray*}
\end{proof}

\section{Nonescape of mass}
The aim of this section is to prove Proposition \ref{prop;nescape}. 
Let $L, \Gamma, G, g_t, U,x$ be  as  in Theorem \ref{thm;proof} and let  $X=L/\Gamma$.
The main tool is the contraction property of a function $\alpha$ 
on $X$ (we call it height function) which measures whether  points in  $X$  
 are close to  $\infty$. 
 Height functions with the contraction property  on homogeneous spaces  
are introduced by 
Eskin, Margulis and Mozes \cite{emm98}.
A significant   improvement  is given by Benoist and
 Quint \cite{bq12} which  will be used in this paper. 

\subsection{Existence of height function}

\begin{lem}\label{lem;contract}
	There exist positive real numbers $\lambda_0, T_0$ such that 
for any compact subset  $Z$ of $ X$ and $t\ge T_0$
 there exists a lower semicontinuous   function  $\alpha: X\to [0, \infty]$  and $b>0$ with the following properties:
\begin{enumerate}[\rm (1)]
\item
For  every $y\in X$
\begin{equation}\label{eq;linear}
\int _{I^m}\alpha(g_t u(w)y) \dd w \le e^{-t\lambda_0 } \alpha (y)+b.
\end{equation}

\item $\alpha $ is finite  on $ G Z$.

\item $\alpha$ is Lipschitz, i.e.~for every compact subset $F$
of $G$ there exists $C\ge 1$ such that $\alpha(gy)\le C \alpha(y)$ for every $y\in X$ and $g\in F$.

\item $\alpha$ is proper, i.e.~if $\alpha(Z_0)$ is bounded for some subset $Z_0$ of $X$ then
$Z_0$ is relatively compact.  
\end{enumerate}
\end{lem}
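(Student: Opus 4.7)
The plan is to construct $\alpha$ via the Margulis-type height function machinery introduced in Eskin-Margulis-Mozes \cite{emm98} and refined by Benoist-Quint \cite{bq12}, using Lemma \ref{lem;estimate} as the per-vector contraction input.

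First I would invoke reduction theory for $\Gamma \subset L$ to produce a finite family of algebraic representations $(\rho_i, V_i)_{i \in I}$ of $L$, each endowed with a norm $\|\cdot\|_i$ and a $\Gamma$-invariant discrete subset $\Delta_i \subset V_i$, such that the cusps of $L/\Gamma$ are detected by short vectors in the orbits $\rho_i(\tilde y)^{-1} \Delta_i$, where $\tilde y \in L$ is any lift of $y$. After reducing to the homogeneous subspace $\overline{Gx}$, the group $G$ acts with dense orbit, so the relevant $V_i$ admit no nonzero $G$-fixed vector and Lemma \ref{lem;estimate} applies to each of them, producing a common $\vartheta_0 > 0$. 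Fix $0 < \vartheta < \vartheta_0$ and set
\begin{equation*}
\alpha(y) = \max_{i \in I}\ \sup_{v \in \rho_i(\tilde y)^{-1} \Delta_i,\ 0 < \|v\|_i \leq 1}\ \|v\|_i^{-\vartheta}.
\end{equation*}
Discreteness forces the supremum to be a maximum over finitely many vectors, so $\alpha$ is lower semicontinuous and finite on $GZ$ for any compact $Z$, which is property (2); Mahler's criterion delivers properness (4); and continuity of each $\rho_i$ on compact subsets of $L$ yields the Lipschitz estimate (3).

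The core of the argument is property (1). For each vector $v$ realizing the supremum in $\alpha(y)$, Lemma \ref{lem;estimate} gives, for any target $a' > 0$ and all $t$ large enough, the bound $\int_{I^m} \|g_t u(w) v\|_i^{-\vartheta}\, dw \leq a' \|v\|_i^{-\vartheta}$. The expected main obstacle is to upgrade this pointwise contraction into one for the max/sup function $\alpha$: as $w$ varies over $I^m$, the vectors $v$ (and even the index $i$) realizing the supremum may change, so a direct exchange of integral and supremum is not legitimate. I would resolve this by following the Benoist-Quint combinatorial scheme of \cite{bq12}: organize the short vectors in each orbit into nested flags associated to parabolic subgroups, and use a Minkowski-type separation to show that whenever $w$ forces a switch from one dominating vector to another, the new dominating vector is bounded in terms of the old one, so the loss is controlled by an additive constant. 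Choosing $a'$ sufficiently small in Lemma \ref{lem;estimate} and absorbing the switching loss into $b$ then yields (\ref{eq;linear}). The switching argument is the only genuinely subtle step; the remainder of the construction is bookkeeping.
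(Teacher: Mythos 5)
Your proposal follows the same broad Benoist--Quint strategy as the paper, correctly identifies Lemma~\ref{lem;estimate} as the per-vector contraction input, and correctly pinpoints the genuine obstacle: the dominating short vector (and even the degree) can switch as $w$ ranges over $I^m$, so one cannot exchange $\sup$ and $\int$ naively. The paper indeed resolves this via claim~(5.9) of \cite{bq12}, which says that (for $\alpha_\varepsilon(y)$ large) the primitive integral monomials nearly realizing the maximum form a set $\Psi$ with at most one element per degree, so the integral over a finite bounded family controls the integral of the max. So far you are aligned with the source.

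There is, however, one genuine gap in the way you set things up. You assert that ``after reducing to $\overline{Gx}$, the group $G$ acts with dense orbit, so the relevant $V_i$ admit no nonzero $G$-fixed vector.'' This is not available here: Lemma~\ref{lem;contract} carries no density hypothesis (the reduction to $Gx$ dense happens later, in the proof of Theorem~\ref{thm;proof}), and even under density the exterior powers $\wedge^i\mathbb R^d$, $0<i<d$, do contain $G$-fixed vectors whenever $G \subsetneq L$. If a $G$-fixed integral monomial $v$ happens to be the short vector realizing your supremum, then $\|g_t u(w)v\| = \|v\|$ for all $w$, nothing contracts, and your inequality~(1) would force $\alpha(y)$ to be uniformly bounded, which fails near the cusp. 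The paper sidesteps exactly this by defining $\varphi_\varepsilon$ so that it vanishes when the $G$-fixed component $q_0(v)$ dominates ($\|q_0(v)\|\ge\varepsilon^{\delta_i}$) and is $+\infty$ on small purely $G$-fixed monomials; the contraction Lemma~\ref{lem;restate} is then only ever invoked on the complement $V$ of the $G$-invariants, where Lemma~\ref{lem;estimate} legitimately applies. Your plain $\|v\|^{-\vartheta}$ does not encode this truncation and cannot satisfy~(1) as written; you would need to build the $q_0$-cutoff (or a projection to the $G$-noninvariant part) into the definition of $\alpha$, together with the degree-dependent exponents $\delta_i/\delta_\lambda$ that make the Mother Inequality (Lemma~\ref{lem;mo}) submultiplicative. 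Separately, and less critically, the paper does not handle general $(L,\Gamma)$ by a single ``reduction theory'' step: it passes to the semisimple center-free quotient, splits off irreducible factors, invokes Margulis arithmeticity to embed the higher-rank pieces into $SL_d(\mathbb Z)$, and treats real rank one separately via Garland--Raghunathan (properties (a)--(d)), where the finitely-many-orbits structure makes a cruder $\tilde\alpha_\vartheta$ of exactly your proposed form suffice. In that low-rank case your formula is essentially the paper's; in higher rank it is missing the $\varepsilon$-truncation that makes the argument go through.
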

Remark: Here lower semicontinuity implies that for every positive number $M$ the subset  $\alpha^{-1}([0, M])$ is  closed and  hence compact by (4).

We first deal  with the case where  $\Gamma$ is arithmetic.
For the moment we assume that   $L=\SL_d(\mathbb R)$, $\Gamma =\SL_d(\mathbb Z)$  $(d\ge 2)$ and  review the height function defined in \cite{bq12}.  It is well known that the space 
$X=\SL_d(\mathbb R)/\SL_d(\mathbb Z)$ can be identified with the set
of unimodular lattices in $\mathbb R^d$. For every $y\in X$, let $\Lambda_y$ be
the lattice in $\mathbb R^d$ corresponding to it, i.e.~$\Lambda_y=g\mathbb Z^d$ 
if $y=g\SL_d(\mathbb Z)$.
A vector 
\[
v\in \wedge^* \mathbb R^d
:=\oplus_{0\le i\le d} \wedge^i \mathbb R^d
\]
 is monomial if $v=v_1\wedge \cdots \wedge v_i$ 
where $v_1, \ldots, v_i\in \mathbb R^d$. We say $v$ is $y$-integral monomial   if
we can take $v_i\in \Lambda_y$.

Recall that $L=\SL_d(\mathbb R)$ has a natural structure of real algebraic groups.
Since $G\le L$ is a 
connected  semisimple Lie group,
it is the connected component of  an algebraic subgroup.  
We fix a maximal connected diagonalizable  subgroup 
$A$ of $G$ containing $\{g_t: t\in \mathbb R\}$ and normalizing $U$. Let $\Phi(G, A) $ be the 
 relative root system, i.e.~ the set of 
nonzero  weights   of $A$
appeared in the adjoint representation.
We fix a positive system $\Phi(G, A) ^+$ such that $\eta (g_1)\ge 1$ for 
every $\eta\in \Phi(G, A) ^+$.  
We endow a partial order on the set $P$ of algebraic characters  of  $A$ by 
$
\eta \le \mu$  
if and only if 
$ \mu-\eta$
 is  nonnegative linear combination  of  $\Phi(G, A) ^+$.
 For any   irreducible 
representation of $G$,
the set of weights of $A$ in this representation 
 has a unique maximal element called
 highest weight of the representation. 
Let $P^+$ be the set of all the highest weights appearing in $\wedge^*\mathbb R^d$.

 For each $\eta\in P^+$, let $\pi_\eta$ be the projection 
from $\wedge^* \mathbb R^d$ to the subspace consisting of all the irreducible sub representations with
highest weight $\eta$. 
Let  $\|\cdot\|$ be the usual Euclidean norm on $\wedge^*\mathbb R^d $.
One of the key ingredients of \cite{bq12} is the following Mother Inequality.
\begin{lem}[\cite{bq12} Proposition 3.1]\label{lem;mo}
There exists $C_1\ge 1$ such that for any monomials $u, v, w$ in $\wedge ^*\mathbb R^d$
one has the inequality 
\[
\|\pi_\eta(u)\|\cdot \|\pi_\mu (u\wedge v\wedge w)\|\le C_1 \max_{\nu, \rho\in P^+ \atop 
\nu+\rho\ge \eta +\mu} \|\pi_\nu(u\wedge v)\|\cdot
\|\pi_\rho(u\wedge w)\|.
\]
\end{lem}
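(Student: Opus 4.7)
The plan is to reduce the inequality to a $G$-equivariant identity in the exterior algebra, then analyze it using the isotypic decomposition and Clebsch--Gordan. The core observation is that the wedge product is a $G$-equivariant operation, so if we find an abstract relation between $u\otimes(u\wedge v\wedge w)$ and $(u\wedge v)\otimes(u\wedge w)$ in $\wedge^*\mathbb R^d\otimes\wedge^*\mathbb R^d$, the isotypic decomposition will break the identity into blocks controlled by the dominance order on weights.

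First I would construct a bounded $G$-equivariant bilinear map
\[
\Phi : \wedge^*\mathbb R^d \otimes \wedge^*\mathbb R^d \longrightarrow \wedge^*\mathbb R^d \otimes \wedge^*\mathbb R^d
\]
that, on monomials, carries $(u\wedge v)\otimes(u\wedge w)$ to a nonzero multiple of $u\otimes(u\wedge v\wedge w)$ plus summands of strictly lower total weight. A natural candidate uses the coproduct $\Delta$ dual to $\wedge$ (the Hopf-algebra structure on the exterior algebra): comultiply both factors to isolate the shared $u$, contract via the inner product, and repackage. A direct multilinear calculation on monomials should identify the leading term, and the boundedness of $\Phi$ follows from finite-dimensionality together with $K$-invariance of the norms.

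Next, I would decompose this identity along the isotypic components. Because $\Phi$ is $G$-equivariant and because $V_\nu\otimes V_\rho$ contains only irreducibles $V_\sigma$ with $\sigma\le\nu+\rho$, the projection to the $(\lambda,\mu)$-isotypic piece in the target can only receive contributions from $(\nu,\rho)$-isotypic pieces in the source with $\nu+\rho\ge\lambda+\mu$. Taking norms and using the operator bound for each of the finitely many nonzero blocks yields
\[
\|q_\lambda(u)\|\,\|q_\mu(u\wedge v\wedge w)\| \;\le\; C\sum_{\nu+\rho\ge\lambda+\mu}\|q_\nu(u\wedge v)\|\,\|q_\rho(u\wedge w)\|,
\]
and the finiteness of $P^+$ lets us replace the sum by $|P^+|^2$ times the maximum, producing the constant $C_1$.

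The main obstacle will be the algebraic identity in the first step: pinning down an explicit $\Phi$ and verifying that its ``correction terms'' truly live in isotypic components with strictly lower dominance weight, so that Clebsch--Gordan actually delivers the desired one-sided constraint $\nu+\rho\ge\lambda+\mu$. If the explicit Hopf-algebra construction proves too intricate, a cleaner fallback is a compactness argument: by joint homogeneity the ratio of the two sides is a scale-invariant continuous function on the compact space of unit-norm monomial triples, provided one first establishes the support statement that the right-hand side does not vanish whenever the left-hand side does --- a statement that itself follows from a (weaker, purely qualitative) version of the Clebsch--Gordan analysis.
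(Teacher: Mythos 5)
The paper itself offers no proof of this lemma: it is quoted verbatim from Benoist--Quint (\cite{bq12}, Proposition 3.1), whose argument works with $K$-invariant ``good'' norms and the structure of decomposable vectors via the Cartan decomposition, using decomposability in an essential, quantitative way. Measured against that, your plan contains a genuine gap at its central step. The claim that $G$-equivariance of $\Phi$ plus Clebsch--Gordan forces the $(\lambda,\mu)$-block of the target to receive contributions only from $(\nu,\rho)$-blocks with $\nu+\rho\ge\lambda+\mu$ is false: the $(\lambda,\mu)$-isotypic piece of the target is a sum of copies of $V_\lambda\otimes V_\mu$, which under the diagonal action contains \emph{every} constituent $V_\sigma$ with $\sigma\le\lambda+\mu$, so an equivariant map out of the $(\nu,\rho)$-block is nonzero as soon as the two tensor products share any constituent, with no dominance relation forced. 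For instance, for $G=SL_2(\mathbb R)$ with $V_n$ the irreducible of highest weight $n$, one has $\mathrm{Hom}_G(V_1\otimes V_1,\,V_3\otimes V_3)\neq 0$ because both contain $V_2$ and $V_0$, although $1+1\not\ge 3+3$. The only one-sided statement equivariance really gives concerns multiplication into a single copy of $\wedge^*\mathbb R^d$, namely $q_\sigma(a\wedge b)=\sum_{\nu+\rho\ge\sigma}q_\sigma\bigl(q_\nu(a)\wedge q_\rho(b)\bigr)$, and that bounds projections of a \emph{product} from above by projections of its factors --- the opposite direction from the lemma, which is a reverse inequality, false for non-decomposable vectors; so monomiality must enter quantitatively, and in your scheme it never does beyond the leading-term identity.

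Two further points. First, your $\Phi$ as described is not $G$-equivariant, since contraction with the Euclidean inner product is only $K$-equivariant; the clean equivariant choice is $\Phi=(\mathrm{id}\otimes m)\circ(\Delta_{i,j}\otimes\mathrm{id})$ (coproduct on the first factor, then wedge the second leg into $b$), and for monomials this even gives the exact identity $\Phi\bigl((u\wedge v)\otimes(u\wedge w)\bigr)=\pm\,u\otimes(u\wedge v\wedge w)$ with no correction terms. But running your isotypic bookkeeping on this $\Phi$ only yields the existence of an intermediate weight $\beta$ occurring in $\wedge^j\mathbb R^d$ with $\nu\le\lambda+\beta$ and $\mu\le\beta+\rho$ --- inequalities pointing the wrong way, which do not imply $\nu+\rho\ge\lambda+\mu$; you would obtain the inequality with the maximum over an uncontrolled set of pairs $(\nu,\rho)$, which is weaker than the lemma and useless for the $\varphi_\varepsilon$-calculus that relies on the dominance restriction. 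Second, the compactness fallback cannot close this: on the compact set of unit-norm monomial triples both sides vanish simultaneously at degenerate configurations, and a ratio of continuous functions vanishing on the same set need not stay bounded near it (compare $x/x^2$ on $[0,1]$); the entire content of the mother inequality is concentrated at exactly those degenerations. To have a complete proof you should follow \cite{bq12}: reduce to $K$-invariant good norms and control the isotypic projections of decomposable vectors through the Cartan decomposition, where decomposability does real work.
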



We fix the following index:
\[
\delta_i=(d-i)i \quad\mbox{and}\quad \delta_\eta= \log( \eta( g_1))
\]
for  $0\le i\le d$ and $\eta\in P^+$. Recall that
$U$ is $ g_1 $ expanding, so for
 $\eta \in P^+\setminus 0$ (where $0$ is the trivial  character of $A$)  we have
$\delta_\eta>0$.
 Also we take
\begin{equation}\label{eq;kappa}
\sigma=(\min_{\eta\in P^+\setminus 0} \delta_\eta)^{-1}
\quad\mbox{and}\quad \sigma_1=(\max_{\eta\in P^+\setminus 0} \delta_\eta)^{-1}.
\end{equation}

Let $\varepsilon>0$ and $0<i< d$. 
For every  
$v\in \wedge^i \mathbb R^d$  we let
\[
\varphi_{ \varepsilon }(v)=\left\{
\begin{array}{ll}
\min _{\eta\in P^+\setminus 0}  \varepsilon ^{\frac{\delta _i}
{\delta _\eta}}\|\pi_\eta(v)\|^{\frac{-1}{\delta _\eta}} & \mbox{ if }
\|\pi_0(v)\|<\varepsilon ^{\delta _i}\\
0 & \mbox{otherwise.}
\end{array}
\right.
\]
We remark here 
that  $\varphi_{ \varepsilon }(v)=\infty $
 if
 $v=\pi_0(v)$ and $\|v\|<  \varepsilon ^{\delta _i}$. 

\begin{lem}\label{lem;restate}
There exist positive numbers $\lambda,\vartheta, T$ with the following properties:
for all
$t\ge T$,
  $v\in \wedge ^i\mathbb R^d$  $(0<i<d)$ and $0<\varepsilon <1$ one has
\begin{equation}\label{eq;swim}
\int_{I^m} \varphi_{ \varepsilon }^\vartheta(g_tu(w)v) \dd w\le e^{-\lambda \vartheta t }\varphi_{ \varepsilon }^\vartheta
(v)  .
\end{equation}
\end{lem}

\begin{proof}
Let $V$ be the   $G$ invariant subspace in $\wedge^* \mathbb R^d$ complementary to     $\pi_0(\wedge^* \mathbb R^d)$.
For the representation of  $G$ on $V$ we fix $\vartheta_0, \lambda_0>0$ so that   Lemma \ref{lem;estimate} holds.
Then for every  $\frac{\vartheta_0}{2\sigma\delta_\eta}$ where $\eta\in P^+\setminus 0$, there exists $T_\eta>0$ such that
$(\ref{eq;goal estimate})$ holds  for $t\ge T_\eta$. 
We show that the lemma holds for  $\vartheta = \vartheta_0/2\sigma$, $\lambda=\lambda_0\sigma_1$ and $T=\max_{\eta\in P^+\setminus 0} T_\eta$.

Assume that  $v\neq \pi_0(v)$ and $\|\pi_0(v)|< \varepsilon ^{\delta_i}$. 
By 
Lemma \ref{lem;estimate}   for every $\eta\in P^+\setminus 0$ with $\pi_\eta(v)\neq 0$ and $t\ge T\ge T_\eta$
\[
\int_{I^m}\|\pi_\eta(g_tu(w)v)\|^{\frac{-\vartheta}{\delta_\eta}}\dd w
\le e^{-\lambda_0 \vartheta t/\delta_\eta} \|\pi_\eta(v)\|^{\frac{-\vartheta}{\delta_\eta}}
\le e^{-\lambda \vartheta t } \|\pi_\eta(v)\|^{\frac{-\vartheta}{\delta_\eta}}.
\]
Therefore (\ref{eq;swim}) holds.

 If either 
 $\|\pi_0(v)\|\ge \varepsilon^{\delta _i}$ or    $\pi_0(v)=v$ and $\|v\|< \varepsilon^{\delta _i} $,
  then both sides of (\ref{eq;swim}) are either $0$ or  $\infty$ respectively.
So (\ref{eq;swim}) holds trivially.
\end{proof}

Following \cite{bq12} we define  $\alpha_{\varepsilon }: X\to [0, \infty]$ by 
\[
\alpha_{ \varepsilon }(y)=\max\varphi_{\varepsilon }(v)
\]
where the maximum is taken over all the non-zero $y$-integral monomials $v\in \wedge ^i\mathbb R^d$
with $0<i<d$.

\begin{lem}\label{lem;inequality}
	Let $\vartheta, \lambda$ and $T$ be given as in Lemma \ref{lem;restate}
	and let $T_0=T+\frac{2\log (2d)}{\lambda \vartheta}$. 
Then   for any $t\ge T_0$ there exists 
 $\varepsilon ,
b>0$ such that for every $y\in X$
\begin{equation}\label{eq;swim5}
\int_{I^m} \alpha_{ \varepsilon }^{\vartheta}(g_tu(w)y) \dd w\le  e^{-\lambda\vartheta t /2}\alpha_{ \varepsilon }^\vartheta(y)+b.
\end{equation}
\end{lem}
\begin{proof}
We fix $t\ge  T_0$ and set 
 $C_0=\sup\{\|g_tu(w)\|+\|(g_tu(w))^{-1}\|: w\in I^m\}\ge 1$
where $\|\cdot\|$ is the operator norm for elements of $G$ acting on $\wedge^*\mathbb R^d$.
We take $\varepsilon $ small enough so that 
\begin{align}\label{eq;detail}
C_0^{2\sigma}(C_1\varepsilon)^{\sigma_1/2} <1
\end{align}
where $C_1$ is the constant given in Lemma \ref{lem;mo} and $\sigma,\sigma_1$
are defined in (\ref{eq;kappa}).
Let 
\[
b_{1}=\sup \varphi_{\varepsilon }(v)<\infty
\]
where the supremum is taken over all the monomials $v\in\wedge^i \mathbb R^d
\ (0< i <d)$
with $\|v\|\ge 1$. We will show that (\ref{eq;swim5}) holds for 
\[
b=2^m(C_0^{\sigma}\max\{b_1,C_0^{2\sigma}\})^\vartheta.
\]

 It follows from the definition of $C_0$ that  for every monomial
 $v\in \wedge^i\mathbb R^d$ with $0< i <d$ one has 
\[
C_0^{-\sigma}\varphi_{\varepsilon}(v)\le \varphi_{\varepsilon}(g_tu(w)v)\le C_0^{\sigma} \varphi_{\varepsilon}(v).
\]
If $\alpha_{ \varepsilon }(y)\le \max\{b_1, C_0^{2\sigma}\}$, then
\[
\int_{I^m} \alpha_{ \varepsilon }^\vartheta(g_tu(w)y) \dd w\le b.
\]

Let $\Psi$ be the finite set of primitive $y$-integral and monomial elements $v$ 
of $\wedge^*\mathbb R^d$ with degrees in $(0,d)$ such that 
\[
\varphi_{ \varepsilon }(v)\ge C_0^{-2\sigma} \alpha_{\varepsilon}(y).
\]
Then for all $w\in I^m$,
\[
\alpha_{\varepsilon}(g_tu(w)y)=\max_{v\in \Psi} \varphi_{\varepsilon}(g_tu(w)v).
\]
It follows from
claim (5.9) 
 in the  proof of \cite[Proposition 5.9]{bq12}\footnote{The claim is proved 
 	using (\ref{eq;detail}) and some corollaries of Lemma \ref{lem;mo}.}   that
 if  $\alpha_{ \varepsilon }(y)>\max\{b_1, C_0^{2\sigma}\}$ 
 then 
 $\Psi$ contains at most one element up to sign change in each degree $i$. 
Therefore,
in this case  Lemma \ref{lem;restate} implies 
\begin{align*}
\int_{I^m} \alpha_{ \varepsilon }^\vartheta(g_tu(w)y) \dd w&\le \sum _{v\in \Psi}
\int_{I^m} \varphi_{ \varepsilon }^\vartheta(g_tu(w)v) \dd w \\
&\le 
{e^{-\lambda\vartheta t}}\sum _{v\in \Psi} \varphi_{ \varepsilon }^\vartheta(v)\\
&\le {e^{-\lambda\vartheta t}}  2d \cdot \alpha_{ \varepsilon }^\vartheta(y)\\
&\le e^{-\lambda\vartheta t/2}  \alpha_{ \varepsilon }^\vartheta(y). 
\end{align*}
\end{proof}

\begin{lem}
	\label{lem;arithmetic}
Suppose that  $X=L/\Gamma=\SL_d(\R)/\SL_d(\Z)$. Then Lemma \ref{lem;contract} holds. 
\end{lem}
\begin{proof}
	We fix $\lambda, T_0, \vartheta$ as in Lemma \ref{lem;inequality}. 
	We show that Lemma \ref{lem;contract} holds for $\lambda_0=\lambda \vartheta/2$
	and $T_0$.
	Given a  compact subset $Z$ of $X$, by Mahler's compactness criterion there
	exists $\varepsilon >0$ such that $\alpha_\varepsilon^\vartheta$ is finite on $Z$.
	For $t\ge T_0$, by possibly making $\varepsilon$ smaller, Lemma \ref{lem;inequality} implies that there exist $b>0$ such that (\ref{eq;swim5}) holds. Therefore, (1) and (2) of Lemma \ref{lem;contract} hold for $\alpha=\alpha^\vartheta_\varepsilon$ .
	The lower semicontinuity and Lipschitz property (3) can also be checked directly 
	from the definitions. The property (4) follows from Mahler's compactness criterion. 
\end{proof}

 The general case of Lemma \ref{lem;contract} will 
be reduced to the  arithmetic case 
and the  rank one case below. 
\begin{lem}
	\label{lem;rank one}
	Suppose that  $L$ is a connected semisimple Lie group with   (real) rank one. Then 
	Lemma \ref{lem;contract} holds. 
\end{lem}
\begin{proof}[Sketch of Proof]	
	If $X=L/\Gamma$ is compact, then we can simply take $\alpha(y)=1$ for all $y\in X$.
	Suppose that  $X$ is noncompact. 
	It follows from \cite{gr70} (cf.~\cite[Proposition 3.1]{kw} 
	and \cite[Page 54]{bq12}) and the proof of  \cite[Proposition 2.7]{em04}  that there 
	exists a finite dimensional representation $V$ of $G$ with norm $\|\cdot\|$ and nonzero 
	vectors  $v_1, \ldots, v_r$ of $V$
	with the following properties: 
	
	\begin{enumerate}[(a)]
		\item $\Gamma v_i$ is closed and hence discrete in $V$ for $1\le i\le r$.
		\item For any $F\subset L$, the set $F\Gamma\subset L/\Gamma$ is relatively compact if and only if 
		there exists $a>0$ such that $\|g\gamma v_i\|>a$ for any $\gamma\in \Gamma, g\in F$
		and $1\le i\le r$.
		\item There exists $a_0>0$ such that for 
		any $g\in L$ there exists at most one $v\in \bigcup_{1\le i \le r} \Gamma v_i$  such that $\|gv\|<a_0$.
	\end{enumerate}
	Let
	\[
	\tilde \alpha_\vartheta(g\Gamma)=\max_{1\le i\le r}\max_{\gamma\in \Gamma}\|g\gamma v_i\|^{-\vartheta}.
	\]
	Lemma 
	\ref{lem;contract}
	follows from  properties (a)-(c) listed above and 
	Lemma \ref{lem;estimate} (for the action of $G$ on the maximal $G$ invariant subspace of $V$ having no nonzero $G$ fixed vectors) by taking $\alpha=\tilde \alpha_\vartheta$ for some  $\vartheta $ sufficiently small.
\end{proof}

We also need  need the following 
lemma which is straightforward to check. 
\begin{lem}\label{lem;straight}
Let $\Gamma_1$ be a lattice of a connected Lie group $L_1$. Let $\varphi: L\to L_1$ be a surjective
homomorphism of Lie groups so that $\varphi(G)$ is nontrivial. Suppose that $\varphi(\Gamma)\subset 
\Gamma_1$ and the induced map $X=L/\Gamma\to L_1/\Gamma_1$ is proper. If Lemma \ref{lem;contract}
holds for  $L_1/\Gamma_1, \varphi(g_t), \varphi(U)$ or it holds for 
$L/\Gamma', g_t, U$ where $\Gamma'$ is a finite index subgroup of $\Gamma$, then it holds for $X, g_t, U$. 
\end{lem}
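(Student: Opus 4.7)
The plan is to handle the two alternative hypotheses separately, in each case building a height function on $X = L/\Gamma$ out of the given one and checking properties (1)--(4) of Lemma \ref{lem;contract}.

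In the first case, where Lemma \ref{lem;contract} is available on $L_1/\Gamma_1$ for $\varphi(g_t)$ and $\varphi(U)$, I would pull back along the induced projection $\pi : X \to L_1/\Gamma_1$ and set $\alpha := \alpha_1 \circ \pi$, where $\alpha_1$ is chosen for the compact set $\pi(Z)$. The identity $\pi(g_t u(w) y) = \varphi(g_t) \varphi(u(w)) \pi(y)$ reduces the integral on the left of (\ref{eq;linear}) for $\alpha$ to one involving $\alpha_1$ and the image $\varphi(u(\cdot))$. Since $\ker(\varphi|_U)$ is a closed connected subgroup of the abelian group $U$ and is normalized by $g_t$, its Lie algebra is $\mathrm{Ad}(g_1)$-invariant, hence a sum of $g_t$-eigenspaces; after an appropriate change of basis, it is spanned by a sub-collection of the eigenvectors $\mathbf{e}_i$, complemented by the remaining $m-k$ of them which map isomorphically to a parameterization of $\varphi(U)$ satisfying the analogue of (\ref{eq;Recall}). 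Fubini on $I^m = I^{m-k}\times I^k$ then lets me do the $\varphi(U)$-integration first, invoke the input inequality to bound it by $a \alpha_1(\pi(y)) + b$, and collapse the $I^k$ factor at the cost of a multiplicative constant $2^k$. Taking the input constant smaller than $2^{-k}$ recovers a contraction constant strictly less than one. Finiteness of $\alpha$ on $GZ$ is immediate from $\pi(GZ) \subseteq \varphi(G)\pi(Z)$; the Lipschitz property transports via $\pi(gy) = \varphi(g)\pi(y)$; lower semicontinuity comes from continuity of $\pi$; and properness of $\alpha$ follows from properness of $\alpha_1$ combined with the assumed properness of $\pi$.

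In the second case, with Lemma \ref{lem;contract} available for a finite-index sublattice $\Gamma' \subseteq \Gamma$, I would push down along the finite covering $p : L/\Gamma' \to L/\Gamma$ of degree $N = [\Gamma : \Gamma']$ by summing over the fiber: set $\alpha(y) := \sum_{\tilde y \in p^{-1}(y)} \alpha'(\tilde y)$, where $\alpha'$ is chosen for the compact preimage $\tilde Z = p^{-1}(Z)$. The inequality (\ref{eq;linear}) follows term-by-term, giving $\int_{I^m} \alpha(g_t u(w) y)\,dw \le a\alpha(y) + Nb$ with the same $a$. Finiteness on $GZ$, the Lipschitz estimate, and properness transfer from $\alpha'$ essentially verbatim, and lower semicontinuity is checked on small evenly covered neighborhoods on which $\alpha$ is a finite sum of lower semicontinuous functions.

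The main obstacle I expect is in the first case: invoking Lemma \ref{lem;contract} on $L_1/\Gamma_1$ requires $\varphi(U)$ to be a $\varphi(g_1)$-expanding subgroup of $\varphi(G)$. This ought to follow from the definition after observing that every nontrivial irreducible representation of the semisimple group $\varphi(G)$ pulls back through $\varphi$ to a representation of $G$ on which the $g_1$-expanding property of $U$ applies, noting that $\varphi(G)$ is semisimple without compact factors because $G$ is. Once this is in place, the remaining subtlety is merely bookkeeping on the kernel directions, so that the input contraction constant can be taken small enough for the output constant $2^k a$ to remain strictly less than one.
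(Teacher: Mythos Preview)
Your approach is correct and is precisely the ``straightforward'' verification the paper has in mind; the paper omits the proof entirely, so there is no alternative route to compare against. Both cases---pulling back $\alpha_1$ along the proper map $\pi$ and summing $\alpha'$ over fibers of the finite cover---are the natural constructions, and your verification of properties (1)--(4) and of the fact that $\varphi(U)$ is $\varphi(g_1)$-expanding in $\varphi(G)$ is sound.

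One minor technical point in the first case deserves care. You write that after a change of basis the kernel of $\varphi|_U$ is spanned by a sub-collection of the $\mathbf e_i$ and then apply Fubini on $I^m=I^{m-k}\times I^k$. But changing to the new eigenbasis (via some linear $A$ commuting with the diagonal action) transports the original cube $I^m$ to the parallelepiped $A(I^m)$, so Fubini over the product of cubes does not directly compute the integral you need. The fix is routine: either (i) observe that the conclusion of Lemma \ref{lem;contract} is independent of the choice of eigenbasis parametrization---two such differ by an $A$ commuting with $g_t$, and one covers $A(I^m)$ by finitely many $U$-translates of $I^m$, absorbing the resulting constants by choosing the input $a$ smaller and invoking the Lipschitz property (3); or (ii) work directly with the linear surjection $\psi:\mathbb R^m\to\mathbb R^{m'}$ defined by $u_1\circ\psi=\varphi\circ u$, push Lebesgue measure forward to the zonotope $\psi(I^m)$, and cover that by translates of $I^{m'}$. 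Either way the extra multiplicative constant is harmless because Lemma \ref{lem;contract} lets you take the input $a$ as small as you like at the cost of enlarging $t$.
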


\begin{proof}
	[Sketch of Proof] Let $\alpha_1$ and $\alpha'$ be height functions on $L_1/\Gamma_1$  and $L/\Gamma'$ respectively so that (1)-(4) of Lemma 
	\ref{lem;contract} hold. Then in the first case we can take 
	$\alpha=\alpha_1\circ \varphi$; and in the latter case we can take 
	\[
	\alpha(g \Gamma)=\sum_{i=1}^q \alpha(g \gamma_i \Gamma') 
	\]
	where $\gamma_1 ,\ldots, \gamma_q$ is a  complete list of representatives of
	the left cosets of 
	 $\Gamma/\Gamma' $. 
\end{proof}

\begin{proof}[Proof of Lemma \ref{lem;contract}]
Let $\mathfrak {r}$ be the largest amenable ideal of the Lie algebra $\mathfrak l$  of $L$, $\mathfrak s:=\mathfrak l/\mathfrak r$,
$S:=Aut(\mathfrak s)$.
Let $R$ be the kernel of the  adjoint representation $\Ad_{\mathfrak s}: L\to S$.
 It follows from  \cite[Lemma 6.1]{bq12}   that 
$\Gamma\cap R$ is a cocompact lattice  in $R$ and the image group $\Gamma_S:=\Ad_{\mathfrak s}(\Gamma)$
is a lattice in $S$. Therefore the map
$
L/\Gamma\to S/\Gamma_S
$
is proper. According to Lemma \ref{lem;straight}  it suffices to prove the case where $L$ is a connected semisimple center free Lie group without compact 
factors. 

Under this assumption we can write $L=\prod_{i=1}^qL_i$ as a direct product  of connected semisimple
Lie groups such that $L_i\cap \Gamma$ is an irreducible lattice in $L_i$.
We can 
 assume that $\Gamma=\prod_{i=1}^q L_i\cap \Gamma$ since the latter 
 has finite index in $\Gamma$. 
 Let $\pi_i: L\to L_i$ be the natural quotient map.
 We also use $\pi_i$ to denote the induced map $L/\Gamma\to L_i/\Gamma_i$ according 
 to the context.  
If $\pi_i(G)$ is nontrivial then  $\pi_i(g_t)$ is a nontrivial  $\Ad$-diagonalizable  one parameter subgroup 
of $\pi_i(G)$
and $\pi_i(U)$ is  $\pi_i(g_1)$ expanding.
Suppose that  Lemma \ref{lem;contract} holds for every $L_i/\Gamma_i$ with $\pi_i(G)$ nontrivial. Assume without loss of generality that $\pi_i(G)$ is nontrivial for $1\le i\le p$ and $\pi_i(G)$ is trivial for $p<i\le q$.
Then we can find $\lambda_i, T_i>0$ $(1\le i\le p)$ such that 
for every $t\ge T_0:=\max \{T_i: 1\le i\le p \}$ and the  compact subset $\pi_i(Z)\subset L_i/\Gamma_i\ (1\le i\le p)$ there exists a lower semicontinuous
function 
 $\alpha_i: L_i/\Gamma_i\to [0, \infty]$ satisfying (1)-(4) of Lemma \ref{lem;contract}.
 If $\pi_i(G)$ is trivial, we set $\alpha_i=(1-\mathbbm 1_{\pi_i(Z)})\cdot \infty$.
Then the function $\alpha$ on $X$ defined by
\[
\alpha(y_1, \ldots, y_q)= \alpha_1(y_1)+\cdots+\alpha_q(y_q)\quad
\mbox{where} \quad y_i\in L_i/\Gamma_i
\]
satisfies properties (1)-(4) of Lemma \ref{lem;contract} with respect to $Z,t$
and $\lambda=\min\{\lambda_i: 1\le i\le p \}$.
Therefore it suffices  to prove  the case where $L$ is a connected  center free
semisimple Lie group without compact factors  and $\Gamma$ is an irreducible lattice. 

 If the (real) rank of $L$ is bigger than or equal to two, then Margulis arithmeticity theorem (see e.g. \cite[Theorem 6.1.2]{z})
implies that there is an injective map \[
\varphi: L\to \SL_d(\mathbb R)
\]
such that $\varphi(\Gamma)$ is commensurable with $\varphi(L)\cap \SL_d(\mathbb Z)$.
So Lemma \ref{lem;contract}   follows from Lemmas \ref{lem;arithmetic} and \ref{lem;straight}.
If the rank of $L$ is one, then we can apply Lemma \ref{lem;rank one}
to complete the proof.



\end{proof}

\subsection{Exponential recurrence to cusp}\label{s;rec}
Let $\lambda_0,T_0>0$ be as in Lemma \ref{lem;contract}.
For $Z=\{x\}$ and  $t\ge T_0$ we choose a height function  $\alpha:X\to [0, \infty]$
and $b>0$ so that (1)-(4) in Lemma \ref{lem;contract} hold.\footnote{One should consider  $t$ and $\alpha$
	as fixed in this and the next section. But in the next section we will endow an additional  condition on the lower bound of it.} 
We first  use inequality (\ref{eq;linear}) to study discrete  trajectories
\begin{equation}\label{eq;distrajectory}
\{g_{nt}u(w)x:  n\in \mathbb N\}\quad (w\in I^m).
\end{equation}
Recall that $\{\mathbf e_i: 1\le i\le m\}$ is the standard basis of $\mathbb R^m$ and $b_i>0\ (1\le i\le m)$  satisfy 
(\ref{eq;Recall}). Let 
\[
w=\sum_{i=1}^m a_i\mathbf e_i\quad \mbox{and }\quad w'=\sum_{i=1}^m a'_i \mathbf e_i.
\]
If $|a_i-a'_i|\le  2e^{-ntb_i}$, then two points
 $g_{nt}u(w)x$ and $g_{nt}u(w')x$
 can always be translated to each other by elements in the   compact subset 
$u([-2, 2]^m)$ of $G$.
In view of property (3) of $\alpha$ we consider them as at the same height. 
The following lemma plays a key role to link random walks with respect to $g_tu(I^m)$ and the 
 trajectories  (\ref{eq;distrajectory}).

\begin{lem}[Shadowing Lemma]\label{lem;shade}
For $1\le i\le m$ let $J_i\subset [-1, 1]$ be an interval with length  $ |J_i|\ge e^{-ntb_i}$.
Then for any nonnegative measurable  function $\psi$ on $X$ and $J=\prod_{i=1}^m J_i$ one has
\begin{equation}\label{eq;3 years}
\int_J \psi(g_{(n+1)t}u(w)x) \dd w \le \int_J\int_{I^m} \psi\big(g_{t}u(w_1)g_{nt}u(w)x\big)  \dd w_1\dd w.
\end{equation}
\end{lem}
\noindent
 \begin{proof}
 The proof is the same for $m=1$ and arbitrary $m$, since $U$ is assumed to be abelian. For simplicity we only give details in the case where 
 $m=1$.
 Since $g_{t}u(s_1) g_{nt}=g_{(n+1)t}u(s_1e ^{-ntb_1})$,
  the right   hand side of (\ref{eq;3 years}) is equal to 
 \begin{align*}
 \int_J\int_{I} \psi\big(g_{(n+1)t}u(s+s_1e^{-ntb_1})x\big)  \dd s_1\dd s.
 \end{align*}	
 After   making change of variables $(s_1,s)\to (s_1, \tilde s)=(s_1, s+ s_1 e^{-ntb_1})$,
   we have  the above integral is
\begin{align*}
	\ge \int_{J}\int_{I(\tilde s)}\psi\big( g_{(n+1)t} u(\tilde s) x  \big)\dd s_1\dd \tilde s
\end{align*}
where $I{(\tilde s)}=\{s_1\in I: \tilde s-s_1 e^{-ntb_1}\in J \}$. 
Since $|J|\ge e^{-nt b_1}$ and $I=[-1, 1]$, one has $|I{(\tilde s)}|\ge 1$. 
Therefore (\ref{eq;3 years}) holds. 
\end{proof}
In Lemma \ref{lem;shade} the abelian assumption is essential to us. If we drop the abelian  assumption, then we need
to change the domain of the integral for $w_1$ to something that depends on $J$. In that case it is 
not clear to the author how to get (\ref{eq;goal estimate})
uniformly  for various domains determined by $J$
 and hence the contraction property
(\ref{eq;linear}).

For every positive integer $n$
we need to 
 divide $I^m$ into boxes of sides $e^{-ntb_i}\ (1\le i\le m)$
 so that the above shadowing lemma holds and all the $g_{nt}u(w)x$ for $w$
in a  box are not far away to each other.
We can do this consecutively in each component  $I$ except for the last interval  which we allow
 to have length bigger than $e^{-ntb_i}$ but 
no more than $2 e^{-ntb_i}$. 
We want  the
 partition for   $n+1$  to be a refinement of that  for  $n$, so 
we do this by induction on $n$. 
In the first step we divide $I^m$ into boxes of the form
\[
\prod_{1\le i\le m}[-1 +je^{-tb_i}, -1+(j+1)e^{-tb_i})
\]
with slight modifications for the end  intervals. 
For every $w\in I^m$ we use $I_1(w)$ to denote  the box containing $w$. 
In the second step we  divide each box above into  smaller boxes of the form
\[
\prod_{1\le i\le m}[-1 +je^{-tb_i}+ke^{-2tb_i}, -1+je^{-tb_i}+(k+1)e^{-2tb_i})
\]
and we use
 $I_2(w)$  to denote the box containing $w$. 
By the same construction we do it for all $n$ and define $I_n(w)$ accordingly.
We also take $I_0(w)=I^m$
 for every $w\in I^m$.
Note that for all  $n\in \mathbb N$,   $w\in I^m$ and $w'\in I_n(w)$ one has
\begin{equation}\label{eq;compactG}
g_{nt}u(w')x=hg_{nt}u(w)x\quad \mbox{for some}\quad h\in u([-2,2]^m).
\end{equation}

For every $n\in \mathbb N$ let $\mathcal B_n$ be the smallest sigma algebra of $I^m$ generated by  $I_j(w)$  $(0\le j\le n,w\in I^m)$. 
It is not hard to see that the atom of $w$ in $\mathcal B_n $ is $I_n(w)$
and the sequence $(\mathcal B_n)_{ n\in \mathbb N}$ is a filtration of sigma algebras.

\begin{lem}\label{lem;basic ineq}
For every  $J\in \mathcal B_n\  (n\in \mathbb N)$  one has
\[
\int_{J}\alpha(g_{{(n+1)}t}u(w)x) \dd w\le e^{-t\lambda_0 }\int_{J}\alpha(g_{nt}u(w)x) \dd w +b|J|.
\]
\end{lem}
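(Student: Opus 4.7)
The plan is to reduce the inequality to a single atom of $\mathcal{B}_n$ and then combine the Shadowing Lemma with the contraction property from Lemma \ref{lem;contract}.

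First I would observe that every $J \in \mathcal{B}_n$ decomposes as a disjoint union of its atoms, which by the construction preceding the lemma are exactly the boxes $I_n(w)$. Since both sides of the desired inequality are additive over disjoint unions, it suffices to prove the inequality when $J = I_n(w_0)$ is a single atom. By construction, $I_n(w_0) = \prod_{i=1}^m J_i$ where each $J_i$ is an interval of length at least $e^{-ntb_i}$ (the construction gives length exactly $e^{-ntb_i}$ except possibly for the boundary boxes, which have length at most $2e^{-ntb_i}$), so the product structure required by Lemma \ref{lem;shade} is satisfied.

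Next I would apply the Shadowing Lemma \ref{lem;shade} with $\psi = \alpha$ and $J = I_n(w_0)$, obtaining
\[
\int_J \alpha(g_{(n+1)t} u(w) x) \, dw \le \int_J \int_{I^m} \alpha\bigl(g_t u(w_1) \, g_{nt} u(w) x\bigr) \, dw_1 \, dw.
\]
For each fixed $w \in J$, set $y = g_{nt} u(w) x \in X$ and apply the contraction inequality (\ref{eq;linear}) with the chosen parameter $a = 1/4$:
\[
\int_{I^m} \alpha\bigl(g_t u(w_1) y\bigr) \, dw_1 \le \tfrac{1}{4} \alpha(y) + b = \tfrac{1}{4} \alpha(g_{nt} u(w) x) + b.
\]
Integrating this pointwise inequality over $w \in J$ and using Fubini on the right-hand side of the Shadowing bound yields
\[
\int_J \alpha(g_{(n+1)t} u(w) x) \, dw \le \tfrac{1}{4} \int_J \alpha(g_{nt} u(w) x) \, dw + b|J|,
\]
which is the desired inequality for a single atom. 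Summing over the atoms of a general $J \in \mathcal{B}_n$ completes the proof.

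The argument is quite mechanical once the two main ingredients are available; the only subtlety I anticipate is the bookkeeping in the first step, namely verifying that every atom of $\mathcal{B}_n$ really is a product of intervals satisfying the length lower bound required by Lemma \ref{lem;shade}. This is where the inductive refinement construction (and in particular the convention that the last interval in each coordinate can be slightly longer than $e^{-ntb_i}$ but never shorter) is used, so once that construction is understood there is no essential difficulty. Note also that the abelian assumption on $U$ enters only implicitly here, through the validity of the Shadowing Lemma.
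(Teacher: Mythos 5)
Your proof is correct and follows the same route the paper intends: the paper simply says the lemma "follows from the shadowing Lemma and the linear inequality (\ref{eq;linear})," and your reduction to atoms $I_n(w)$ (which are products of intervals of length at least $e^{-ntb_i}$, so Lemma \ref{lem;shade} applies), followed by the pointwise application of (\ref{eq;linear}) with $a=1/4$ and Fubini, is exactly the intended filling-in of that one-line argument.
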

\begin{proof}
The lemma follows from  shadowing Lemma \ref{lem;shade} and  the linear inequality (\ref{eq;linear}).
\end{proof}

Let us fix a positive real number $l_0$ so that 
\begin{align}
\label{eq;l0large}  &\frac{b}{l_0}+e^{-t\lambda_0}\le e^{-t\lambda_0 /2}, \\
 \label{eq;xinl0} & x\in X_{l_0}\mbox{ where }X_{l_0}=\{y\in X: \alpha(y)\le l_0\}.
\end{align}
We 
 define a sequence of measurable  functions $\sigma_i: I^m\to \mathbb N\cup\{\infty\}$ which represents  the
$i$th 
return time to the compact subset $X_{l_0}$.
To begin with we set $\sigma_0(w)=0$.
To apply the  shadowing lemma we  want $\{w\in I^m: \sigma_i(w)=n\}$ to be 
$\mathcal B_n$ measurable. 
 The formal definition 
is
\begin{equation}\label{eq;return time}
\sigma_i(w)=\inf\{ n>\sigma_{i-1}(w): g_{nt}u(w_1)x\in X_{l_0} \mbox{ for some } w_1\in I_n(w)\}.
\end{equation}
We take the convention that $\inf \emptyset=\infty$. 
In particular, if   $\sigma_{i-1}(w)=\infty $  then  $\sigma_i(w)=\infty$.
For  simplicity we set
\begin{align}\label{eq;Isigma}
I(\sigma_n,w)=I_{\sigma_n(w)}(w).
\end{align}

\begin{lem}\label{lem;pre expon}
There exists   $C_0\ge 1$  such that for all  $q, n\in \mathbb N$ with $q\ge 1$ and $w_0\in I^m$
with
$\sigma_n(w_0)<\infty$
the measure of the  set 
\begin{equation}\label{eq;expjump}
J_{n,q}(w_0)=\{w\in I(\sigma_n, w_0): \sigma_{n+1}(w)-\sigma_n(w)\ge q\}
\end{equation}
is at most 
$ C_0e^{-q t \lambda_0/2}|I(\sigma_n, w_0)|$.
\end{lem}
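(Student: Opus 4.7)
The plan is to extract a geometric contraction for the integrals
\[
I_q := \int_{A_q}\alpha(g_{(m+q)t}u(w)x)\,dw, \qquad A_q := J_{n,q}(w_0),
\]
by applying Lemma \ref{lem;basic ineq} on each nested set $A_q$ and exploiting the pointwise lower bound $\alpha>l_0$ that the very definition of the return time forces on $A_q$. The three ingredients are: Lemma \ref{lem;basic ineq}, the stopping-time measurability $\{\sigma_i>k\}\in\mathcal B_k$ noted after (\ref{eq;return time}), and the choice of $l_0$ with $b/l_0<1/4$.

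First I would fix $m:=\sigma_n(w_0)$ and $J:=I(\sigma_n,w_0)=I_m(w_0)\in\mathcal B_m$. Since $\{\sigma_n=m\}$ is $\mathcal B_m$-measurable and $J$ is an atom of $\mathcal B_m$, we have $\sigma_n\equiv m$ on $J$, so
\[
A_q=J\cap\{\sigma_{n+1}>m+q-1\}\in\mathcal B_{m+q-1}, \qquad J=A_1\supseteq A_2\supseteq\cdots.
\]
Applying Lemma \ref{lem;basic ineq} with filtration index $m+q-1$ to $A_q$ yields
\[
I_q\le \tfrac14\int_{A_q}\alpha(g_{(m+q-1)t}u(w)x)\,dw+b|A_q|.
\]
For $q\ge 2$ the condition $\sigma_{n+1}(w)>m+q-1$ holding on $A_q$ forces, via (\ref{eq;return time}) applied with $w_1=w$, the pointwise bound $\alpha(g_{(m+q-1)t}u(w)x)>l_0$. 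This absorbs $b|A_q|$ into the main term as $(b/l_0)\int_{A_q}\alpha(g_{(m+q-1)t}u(w)x)\,dw$, and since $A_q\subseteq A_{q-1}$ the latter integral is $\le I_{q-1}$. The choice $b/l_0<1/4$ then gives $I_q\le(\tfrac14+\tfrac{b}{l_0})I_{q-1}\le\tfrac12 I_{q-1}$ for $q\ge 2$, hence $I_q\le 2^{-(q-1)}I_1$.

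For the base case I would apply Lemma \ref{lem;basic ineq} once more with $J'=J$ and use that (\ref{eq;return time}) produces some $w_1\in J$ with $g_{mt}u(w_1)x\in X_{l_0}$; then (\ref{eq;compactG}) and the Lipschitz property (3) of $\alpha$ in Lemma \ref{lem;contract} give $\alpha(g_{mt}u(w)x)\le Cl_0$ uniformly on $J$, so $I_1\le(Cl_0/4+b)|J|$. Finally, the same lower bound $\alpha>l_0$ on $A_{q+1}$ at time $m+q$ yields $l_0|A_{q+1}|\le I_q$, producing $|A_q|\le M\cdot 2^{-q}|J|$ for $q\ge 2$ with $M$ depending only on $C,b,l_0$. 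Any $\vartheta_0<\log 2$ together with $Q_0$ large enough that $M\cdot 2^{-Q_0}\le e^{-\vartheta_0 Q_0}$ then delivers the conclusion. The main structural point — the only non-obvious step — is recognizing that $A_q$ is adapted to the filtration at exactly the right level $\mathcal B_{m+q-1}$, which makes the integrand $\alpha(g_{(m+q)t}u(w)x)$ controllable by $\alpha$ at time $m+q-1$ on the same set, where the return-time condition forces $\alpha>l_0$ and closes the recursion; everything else is routine.
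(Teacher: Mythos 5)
Your proof is correct and follows essentially the same route as the paper: the paper also sets $s_q=\int_{J_{q+1}}\alpha(g_{(\sigma_n+q)t}u(w)x)\,dw$, uses the $\mathcal B_{\sigma_n+q-1}$-measurability of $J_q$ to apply Lemma \ref{lem;basic ineq}, absorbs the term $b|J_q|$ via the lower bound $\alpha>l_0$ forced by the return-time definition together with $b/l_0<1/4$ to get $s_q\le\frac12 s_{q-1}$, and finishes with Chebyshev plus the Lipschitz property (3) of $\alpha$ for the base estimate. Your write-up just makes the $q\ge 2$ restriction and the bound on the initial term more explicit.
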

Remark: It follows from Lemma \ref{lem;pre expon} that  $\sigma_n(w)<\infty$ almost surely for all $n\in \mathbb N$. 
\begin{proof}
We fix $w_0$, $n$ and write  $\sigma_n=\sigma_n(w_0), J_q={J_{n,q}}(w_0)$ for simplicity.
Let 
\[
\quad s_q=\int_{J_{q+1}}\alpha(g_{(\sigma_n+q)t}u(w)x) \dd w \qquad (q\in \N)
.
\]
Note that if $w\in J_{q+1}\ (q\ge 1)$, then $\alpha(g_{\sigma_n+q}u(w)x)>l_0$. So 
by Chebyshev inequality 
\begin{align}\label{eq;modify1}
s_{q}> l_0 |J_{q+1}|\quad
\mbox{for all } q\ge 1.
\end{align}

Since $J_{q}\ (q\ge 1)$ is $\mathcal B_{\sigma_n+q-1}$ measurable,  Lemma \ref{lem;basic ineq}  implies
\begin{align}\label{eq;modify}
s_q\le  \int_{J_{q}}\alpha(g_{(\sigma_n+q)t}u(w)x) \dd w\le e^{-t\lambda_0 } s_{q-1}+b|J_{q}|.
\end{align}
By (\ref{eq;l0large}) and (\ref{eq;modify1}) (\ref{eq;modify}) , 
$$
s_q\le \left(e^{-t\lambda_0}+\frac{b}{l_0}\right)s_{q-1}\le e^{-t\lambda_0/2}s_{q-1}
\qquad (q\ge 2).$$
An  induction on $q$ gives
\begin{align}
\label{eq;modify2}
s_q\le s_1 e^{-(q-1)t\lambda_0 /2}\le e^{-qt\lambda_0/2}s_0+b e^{-(q-1)t\lambda_0/2}|J_1| \qquad (q\ge 1),
\end{align}
where in the last inequality we use (\ref{eq;modify}).

Since $J_1=I(\sigma_n, w_0)$, one has 
$
s_0=\int_{I(\sigma_n, w_0)}\alpha(g_{\sigma_n t}u(w)x)\dd w.
$
 It follows from the definitions that there exists 
$w'\in I(\sigma_n, w_0)$ such that $\alpha(g_{\sigma_n t}u(w')x)\le l_0$. 
By property (3) of $\alpha$ in Lemma \ref{lem;contract}, there exists $C\ge 1$ depending on the compact subset $u([-2,2]^m)$ in (\ref{eq;compactG}) such that 
$\alpha (g_{\sigma_n t}u(w)x)\le Cl_0$ for all $w\in I(\sigma_n, w_0)$.
Using  Chebyshev inequality  again for $s_0$ one has 
\begin{align}
\label{eq;details}
s_0\le |I(\sigma_n, w_0)| C l_0.
\end{align}
 Therefore (\ref{eq;l0large}), (\ref{eq;modify1}), (\ref{eq;modify2}) and (\ref{eq;details})  imply  
 for all $q\ge 1$ 
  $$|J_{q+1}|\le (C+e^{t\lambda_0/2}b/l_0) e^{-qt\lambda_0 /2 }
  |I(\sigma_n, w_0)|\le 2C e^{-qt\lambda_0 /2 }
  |I(\sigma_n, w_0)|
 .
  $$
So (\ref{eq;expjump}) holds for $C_0=2Ce^{t \lambda_0 /2} $.
\end{proof}

Recall that the proportion of the trajectory $\{g_tu(w)x: 0\le t\le T\}$ in a subset $K$ of $X$ 
is defined in (\ref{eq;average}).  
A discrete version of this function  is 
defined by
\begin{equation}\label{eq;daverage}
\mathcal D_{K}^n(w)= \frac{1}{n}\sum_{i=0}^{n-1} \mathbbm 1_{K}(g_{it}u(w)x)
\end{equation}
where $n$ is a positive integer and $\mathbbm 1_K$ is the characteristic function of $K$.

 Let $\mathcal C_n$ be the smallest sigma algebra of $I^m$ generated by   $I(\sigma_i, w)$  for
 $0\le i\le n$ and $w\in I^m$ with $\sigma_i(w)<\infty$.
Note that modulo null sets every element  of $\mathcal C_n$ is a disjoint  union of at most countably many   sets of the form $I(\sigma_n, w)$ with $\sigma_n(w)<\infty$. 

\begin{lem}\label{lem;discrete}
For every $0<\varepsilon_0<1$ 
there exists a compact subset $K_0$ of $X$ and $0<a_0<1$ such  that
\begin{equation}\label{eq;gotoUK}
\left| \left\{
w\in I^m:\mathcal D_{K_0}^n(w)\le 1-\varepsilon_0\}
\right\}
\right|\le 2^m a_0^n
\end{equation}
for all positive integer $n$.
\end{lem}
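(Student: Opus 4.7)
The plan is to apply the large deviation Lemma \ref{lem;main} to the sequence of return times $\sigma_i$ and then convert control on their large increments into control on the proportion of the discrete trajectory outside a suitable compact set.

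For the filtration, I would take $\mathcal{A}_i$ to be the sub sigma algebra whose atoms are $I(\sigma_i, w) = I_{\sigma_i(w)}(w)$ as $w$ varies in $I^m$. Since $\sigma_{i-1}(w) \le \sigma_i(w)$, the partition into atoms $I(\sigma_i, w)$ refines that into atoms $I(\sigma_{i-1}, w)$, so $(\mathcal{A}_i)_{i \in \mathbb{N}}$ is a filtration and each $\sigma_i$ is $\mathcal{A}_i$ measurable. Lemma \ref{lem;pre expon} gives exactly the hypothesis (\ref{eq;basic exp}) of Lemma \ref{lem;main}, so applying the latter with $\epsilon = \epsilon_0$ produces $\vartheta > 0$ and an integer $Q$ such that
\[
\left|\left\{w \in I^m : \frac{1}{n}\sum_{i=1}^n \mathbbm{1}_Q\bigl(\sigma_i(w) - \sigma_{i-1}(w)\bigr) \ge \epsilon_0\right\}\right| \le e^{-\vartheta n}
\]
for every positive integer $n$.

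Next I would construct $K_0$. After enlarging the compact set $F_0$ of (\ref{eq;compactG}) to $F_0 \cup F_0^{-1}$ and applying property (3) of $\alpha$ from Lemma \ref{lem;contract}, there is $C > 0$ with $\alpha(g_{\sigma_i(w) t} u(w) x) \le C l_0$ whenever $\sigma_i(w) < \infty$: the $w_1 \in I(\sigma_i, w)$ produced by (\ref{eq;return time}) satisfies $g_{\sigma_i(w) t} u(w_1) x \in X_{l_0}$, and $g_{\sigma_i(w) t} u(w) x$ differs from this by an element of $F_0$. Define
\[
K_0 := \bigcup_{0 \le k < Q} g_{kt}\, X_{C l_0},
\]
which is compact by properties (3) and (4) of $\alpha$. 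Whenever $j \in [\sigma_i(w), \sigma_{i+1}(w))$ and $\sigma_{i+1}(w) - \sigma_i(w) \le Q$, the point $g_{jt} u(w) x = g_{(j - \sigma_i(w)) t}(g_{\sigma_i(w) t} u(w) x)$ lies in $K_0$.

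For almost every $w$ all $\sigma_i(w)$ are finite by the remark following Lemma \ref{lem;pre expon}, so the set of $j \in \{0, \ldots, n-1\}$ with $g_{jt} u(w) x \notin K_0$ is contained in the union of intervals $[\sigma_i(w), \sigma_{i+1}(w))$ of length greater than $Q$. Since $\sigma_{i-1}(w) \ge i - 1$, at most $n$ indices $i$ satisfy $\sigma_{i-1}(w) < n$, giving
\[
n\bigl(1 - \mathcal{D}_{K_0}^n(w)\bigr) \le \sum_{i=1}^n \mathbbm{1}_Q\bigl(\sigma_i(w) - \sigma_{i-1}(w)\bigr).
\]
Thus $\{w : \mathcal{D}_{K_0}^n(w) \le 1 - \epsilon_0\}$ is contained in the large deviation set above and (\ref{eq;gotoUK}) follows with $c_0 = e^{-\vartheta}$. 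The main technical point is designing $K_0$: property (3) of $\alpha$ is needed to promote height control from $w_1 \in I(\sigma_i, w)$ to $w$ itself, and property (4) keeps $K_0$ precompact after absorbing the $Q - 1$ forward $g_t$ translates.
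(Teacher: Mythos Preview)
Your proof is correct and follows essentially the same approach as the paper: apply the large deviation Lemma~\ref{lem;main} to the return times $\sigma_i$ via Lemma~\ref{lem;pre expon}, then build $K_0$ to absorb all discrete times lying in ``short'' return intervals. The paper's choice $K_0=\bigcup_{0\le s\le (Q+N_0)t}g_sF_0X_{l_0}$ differs from yours only cosmetically---it uses $F_0X_{l_0}$ in place of your $X_{Cl_0}$ (the two are interchangeable via property~(3) of $\alpha$) and a continuous rather than discrete union in $s$---and your counting inequality $n(1-\mathcal D_{K_0}^n(w))\le\sum_{i=1}^n\mathbbm 1_Q(\sigma_i-\sigma_{i-1})$ makes explicit a step the paper leaves implicit.
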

\begin{proof}
Recall that  $l_0>0$ is fixed  so that   (\ref{eq;l0large}) and (\ref{eq;xinl0})  hold.
By Lemma \ref{lem;pre expon}  there exists $C_0\ge 1$ and $\vartheta_0=\frac{t\lambda_0}{2}>0$ such that   the measure of  the set $J_{n, q}(w)$ decays exponentially for all $n\in \N$ and $q\ge 1$.
Using Lemma \ref{lem;main} (with $W=I^m,\mu=\frac{1}{2^m}\mathrm{Leb}, \xi_n=\sigma_n, \mathcal F_n=\mathcal C_n$), it follows  that there exists $Q\ge 1$
and $ 0< a_0<1$   such that  for every positive integer $n$ the measure of the  set 
\[
J_n=\left\{w\in I^m:\frac{1}{n} \sum_{i=1}^n  \mathbbm 1 _Q (\sigma_i(w)-\sigma_{i-1}(w))\ge \varepsilon_0
\right \}
\]
is at most  $2^m  a_0^n$. Here  $ \mathbbm 1 _Q$ is the truncation of the  identity function    defined in (\ref{eq;truncation}).

The exponential decay of the measure of $J_n$  is very close to (\ref{eq;gotoUK}). We will prove 
(\ref{eq;gotoUK}) by enlarging $X_{l_0}$ to a bigger compact subset $K_0$.
We claim that the lemma holds for 
\begin{align} \label{eq;tech}
 K_0= \bigcup_{s\in [0,Qt] }g_{s }u([-2, 2]^m)X_{l_0}.
\end{align}
It suffices to prove that for any positive integer  $n$
\begin{align}\label{eq;sunday}
 \{
w\in I^m:\mathcal D_{K_0}^n(w)\le 1-\varepsilon_0\}\subset J_n.
\end{align}

We fix $w$ in the left hand side of (\ref{eq;sunday}).
Let  $0<i_1< \cdots<  i_k<n$ be the sequence of consecutive times $i$ for which $g_{it}u(w)\not\in K_0$.
Since $\mathcal D_{K_0}^n(w)\le 1-\varepsilon_0$, we have $k/n\ge \varepsilon_0$.
To prove $w\in J_n$ it suffices to find
a subset $R$ of $\{1,2,  \ldots,n \}$ such that
\begin{enumerate}[(1)]
\item $\sigma_r(w)-\sigma_{r-1}(w)\ge  Q$  for every  $r\in R$;
\item 
for every $1\le j\le k$  there exists 
$r\in R$  such that  $\sigma_{r-1}(w)<i_j< \sigma_r(w)$.
\end{enumerate}
This amounts  to say that each consecutive block of $\{0\le i<n: g_{it}u(w) x\not \in K_0  \}$ is contained in some interval of the form $[\sigma_{r-1}(w), \sigma_r(w)]$
which has length at least $Q$. This is not difficult to believe since $K_0$ is constructed from $X_{l_0}$ by (\ref{eq;tech}).
The  proof  we give  is technical and  has some inductive flavor. 

Recall that $x\in X_{l_0}$ and  $\sigma_0(w)=0$. For the first step, we let 
\begin{align*}
 m_1& =\max \{ i<i_1: i=\sigma_r(w) \mbox{ for  some } r\ge 0\}\ge 0, \\
 m'_1& =\min \{ i>i_1: i =\sigma_r(w)\mbox{ for  some } r\ge 0\}\le \infty.
\end{align*}
Then there exists a positive integer $r=r_1$ with   $r\le i_1$ such that
  $m_1=\sigma_{r-1}(w)$ and  $m_1'=\sigma_{r}(w)$.
It follows from the definitions that 
\begin{align*}
y:=g_{m_1t}u(w)x&\in u([-2, 2]^m)X_{l_0}\subset K_0, \\
g_{it}y&\in K_0 \quad\mbox{for} \quad i\le   Q, \\
g_{(i_1-m_1)t }y&\not\in K_0.
\end{align*}
Therefore,  $$\sigma_r(w)-\sigma_{r-1}(w)=m_1'-m_1\ge i_1-m_1\ge Q$$ which verifies (1).

If  $i_k< m_1'$ then $R=\{r_1\}$ also satisfies (2) and
we are done. Otherwise we choose the smallest $j$ such that
$ i_{j}>m_1'$.
Then we can repeat the construction to find $r=r_2$ with $r_1<r\le i_{j}$ so that
$\sigma_{r-1}(w)<i_{j}<\sigma_{r}(w)$ and (1) holds.
We continue this procedure until
for $r=r_s$ we have
$i_k<\sigma_{r}(w)$.
It follows directly from the construction that $R=\{r_1, \dots, r_s\}$ satisfies (1)
and (2).
\end{proof}

The following lemma allows us to deduce the continuous version of exponential 
recurrence  from 
the discrete version in Lemma \ref{lem;discrete}. It will also be used in the next 
section.

\begin{lem}\label{lem;continuousd}
Let $ \varepsilon_0<\frac{1}{2},  a_0<1$ be positive numbers  and let ${K_0}$ be a compact subset of $X$. Suppose $x\in K_0$ and  
(\ref{eq;gotoUK}) holds for every positive integer $n$.
Then there exist positive numbers  $ a<1, C\ge 1$ and a compact subset $ K\subset GK_0$ such that  for all $T>0$
\begin{equation}\label{eq;booktrain}
\left |\{w\in I^m: \mathcal A_{ K}^T(w)\le 1- 2\varepsilon_0   \}\right |\le  C a^T.
\end{equation}
\end{lem}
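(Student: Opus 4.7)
The plan is to bootstrap the discrete estimate (\ref{eq;gotoUK}) into its continuous counterpart by thickening $K_0$ along the flow, so that whenever the sampled trajectory $g_{it}u(w)x$ sits in $K_0$, the whole continuous segment over $[it,(i+1)t]$ sits in the thickened set. No further dynamical input is needed beyond the discrete bound; the proof is essentially a comparison between Riemann sums in step $t$ and the continuous integral.

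First I would set
$$K \;:=\; \bigcup_{0\le r\le t} g_r K_0,$$
which is compact in $X$ because $K_0$ is compact and $\{g_r\}_{0\le r\le t}$ is a compact subset of $L$. For any $i\in\mathbb N$ with $g_{it}u(w)x\in K_0$ and any $s\in[it,(i+1)t]$, the identity $g_su(w)x = g_{s-it}\cdot g_{it}u(w)x$ shows that $g_su(w)x\in K$. Thus on every ``good'' discrete step, the corresponding length-$t$ piece of continuous trajectory lies in $K$.

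Next, given $T>0$, set $n=\lfloor T/t\rfloor$. If $\mathcal D_{K_0}^n(w)\ge 1-\epsilon_0$, then at least $(1-\epsilon_0)n$ of the indices $i\in\{0,\ldots,n-1\}$ satisfy $g_{it}u(w)x\in K_0$, and each contributes the full length $t$ to $\int_0^{nt}\mathbbm 1_K(g_su(w)x)\,ds$. Therefore
$$\mathcal A_K^T(w)\;\ge\;\frac{(1-\epsilon_0)\,nt}{T}\;\ge\;(1-\epsilon_0)\Bigl(1-\tfrac{t}{T}\Bigr),$$
which exceeds $1-2\epsilon_0$ as soon as $T\ge t/\epsilon_0$. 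Consequently, for such $T$,
$$\{w\in I^m:\mathcal A_K^T(w)\le 1-2\epsilon_0\}\;\subseteq\;\{w\in I^m:\mathcal D_{K_0}^n(w)\le 1-\epsilon_0\},$$
and the right-hand side has measure at most $c_0^n\le c_0^{-1}(c_0^{1/t})^T$ by hypothesis.

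Finally, I would choose any $c\in(c_0^{1/t},1)$. The estimate above gives $|\{w:\mathcal A_K^T(w)\le 1-2\epsilon_0\}|\le c^T$ for all $T$ past some threshold; for the remaining bounded range of $T$ one uses the trivial bound $|I^m|=2^m$ and enlarges $c$ closer to $1$ (or, equivalently, absorbs a multiplicative constant into $c$), which is all that is required for the Borel--Cantelli application that consumes this lemma. The main content is the discrete-to-continuous bookkeeping in the previous paragraph, and I do not anticipate any essential obstacle beyond checking the arithmetic relating $T$ and $n=\lfloor T/t\rfloor$.
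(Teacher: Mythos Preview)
Your approach is essentially the same as the paper's: thicken $K_0$ along the flow to $\bigcup_{0\le r\le t}g_rK_0$, observe that a good discrete step yields a good continuous segment of length $t$, and compare $\mathcal A_K^T$ with $\mathcal D_{K_0}^{\lfloor T/t\rfloor}$ via the inequality $(1-\epsilon_0)(1-t/T)\ge 1-2\epsilon_0$ for large $T$.

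The one point where your write-up does not quite deliver the lemma as stated is the handling of small $T$. Your two proposed fixes are not equivalent: ``enlarging $c$ closer to $1$'' cannot work, because for $T$ near $0$ the bad set may well be all of $I^m$ with measure $2^m>1$, while $c^T\to 1$ for any $c<1$. Absorbing a multiplicative constant (i.e.\ proving $|\cdot|\le C\,c^T$) does work and, as you note, suffices for the Borel--Cantelli application downstream---but it is not the inequality (\ref{eq;booktrain}) verbatim. The paper closes this cleanly by enlarging $K$ rather than $c$: it fixes $T_0$ with $(1-t/T)(1-\epsilon_0)\ge 1-2\epsilon_0$ for $T\ge T_0$, chooses a compact $K'$ containing $\{g_su(w)x:w\in I^m,\ 0\le s\le T_0\}$, and sets $K=\bigl(\bigcup_{0\le s\le t}g_sK_0\bigr)\cup K'$. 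Then for $T\le T_0$ one has $\mathcal A_K^T(w)=1$ for every $w$, so the exceptional set is empty and (\ref{eq;booktrain}) holds trivially; for $T\ge T_0\ge 2t$ the paper takes $c=c_0^{1/(2t)}$ and uses $\lfloor T/t\rfloor\ge T/(2t)$.
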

\begin{proof}
Let  $T_0=\frac{2}{\varepsilon_0}t$.
We show that (\ref{eq;booktrain}) holds    for 
$$  K=\bigcup_{s\in [0,T_0]}g_su(I^m)K_0, \quad  a=a_0^{\frac{1}{t}}\quad  \mbox{ and }\quad C=a_0^{-1}2^m.$$

 Since $x\in K_0$,  (\ref{eq;booktrain}) holds trivially for $T\le T_0$.
Now we assume   $T> T_0$. 
 We claim that  if $w\in I^m $ satisfies
  $\mathcal A_{ K}^T(w)\le 1-2\varepsilon_0$
   then $\mathcal D_{K_0}^{\lfloor T/t\rfloor}(w)\le 1-{\varepsilon_0}$ 
 where ${\lfloor T/t\rfloor} $ is the biggest  integer less  than or equal to $T/t$.
   Given $i\in \mathbb N$,
   if
   $g_{it}u(w)x\in  K_0 $ then $g_su(w)x\in K$ for $s\in [it, (i+1)t]$.
 Suppose  $\mathcal D_{K_0}^{\lfloor T/t\rfloor}(w)> 1-{\varepsilon_0}$, then 
\[
\mathcal A_{ K}^T(w)> \frac{\lfloor T/t\rfloor(1-\varepsilon_0) t-t}{T}\ge 
 \frac{ T(1-\varepsilon_0) -2t}{T}\ge 1-\varepsilon_0 -\frac{2t}{T_0}=1-2\varepsilon_0.
\]
This completes the proof of the claim. 
So by (\ref{eq;gotoUK}),  the left hand side of (\ref{eq;booktrain}) is
\[
\le 2^ma_0^{\lfloor T/t\rfloor}\le 2^m a_0^{-1}a_0^{ T/t}=Ca^T.
\]
\end{proof}

\begin{proof}[Proof of Proposition \ref{prop;nescape}]
It follows from  Lemmas \ref{lem;discrete} and   \ref{lem;continuousd}.
\end{proof}

\section{Exponential recurrence to  singular subspace}\label{sec;five}
The aim of this section is to prove Proposition \ref{prop;singular}. 
Let $L, \Gamma, G, g_t$, $U,x, Y , C_L(G), F$ be
 as in Proposition \ref{prop;singular}. We write  $X=L/\Gamma$ and  $S=\{g\in L: gY=Y\}$.  Let
 $\mathfrak s, \mathfrak c,\mathfrak l$ be the Lie algebras of $S,C_L(G), L$, respectively.
Let $\mathfrak t$ be  a $ G $ invariant subspace of $\mathfrak l$ complementary to 
$\mathfrak s+\mathfrak c$ with respect to the adjoint action.

We fix a norm $\|\cdot\|$ on $\mathfrak g$ and use 
$\|g\|$ to denote the   operator norm of $g\in G$ with respect to
the adjoint representation. 
There exits positive numbers $\vartheta', T'$ such that
\begin{equation}\label{eq;expgrow}
\max{( \|g_{t}u(w)\|, \|(g_{t}u(w))^{-1}\|)}\le e^{t\vartheta' }
\end{equation}
for all $w\in I^m$ and   $t\ge T'$. 
We fix positive numbers  $\vartheta_0, \lambda_0, T_0>0$ with the following properties:
\begin{itemize}
	\item For  $\vartheta_0$ and  $\lambda_0$  Lemma \ref{lem;estimate} holds
	with respect to the adjoint action  of $G$ on $\mathfrak t$.
	\item  Lemma \ref{lem;contract} holds  for   $\lambda_0$ and  $T_0$.   
\end{itemize}

We fix a positive real number $\vartheta<\vartheta_0$ 
 sufficiently small so that $\frac{\lambda_0 }{2}-\vartheta'\vartheta>0$. 
By Lemma \ref{lem;estimate}, there exists $T_\vartheta>0$ 
such that for any measurable map $\kappa: I^m \to
{ \mathbb N}\cup\{\infty\}$ with $\inf_{w\in I^m} \kappa(w) \ge t\ge T_\vartheta$ 
\begin{equation}\label{eq;lcon}
\sup_{\|v\|=1, v\in \mathfrak t}\int_{I^m}\frac{\dd w}{\|g_{\kappa(w)}u(w)v\|^{\vartheta}}\le e^{-t\vartheta\lambda_0}.
\end{equation}

We fix $t\ge \max \{  T_0, T_\vartheta , T'\}$ and choose a height function $\alpha
: X\to [0, \infty]$ so that (1)-(4) in Lemma \ref{lem;contract} holds with respect to 
$Z=\{x\}$  and some $b>0$. 
For every $w\in I^m$ and $n\in \mathbb N$ we let $I_n(w)$ be the box defined in \S \ref{s;rec}. 
We fix $l_0>0$ so that (\ref{eq;l0large})
and (\ref{eq;xinl0}) hold.  Let    $\sigma_i: I^m\to {\mathbb N}\cup \{\infty\}$
be the $i$th return time to $X_{l_0}$  defined in (\ref{eq;return time}).
By  Lemma \ref{lem;pre expon}
 there exists  $C_0\ge 1$  such that for all $w\in I^m, n\in \N$ and $q\ge 1$
 \begin{align}
 \label{eq;lhard}
  |J_{n,q}(w)|\le C_0e^{-qt\lambda_0/2}|I(\sigma_n,w)|
 \end{align}
 where  $J_{n,q}(w)$ is
 defined in (\ref{eq;expjump}) and $I(\sigma_n, w)$ is defined in (\ref{eq;Isigma}).

Now we  define 
a height function $\beta:X\to [0, \infty]$  
which roughly speaking measures whether 
elements of a fixed compact subset  are close to $FY$. 
The construction here is the same as  \cite[\S 6.8]{bq13}. 
Let $N\supset u([-2, 2]^m)$ be a relatively   compact open  neighborhood of the identity  in $G$. The role of $N$ is to guarantee    $g_{\sigma_n(w)t}u(I({\sigma_n},w))\subset N X_{l_0}$
(for all $n\in \N$ and $w\in I^m$ with $\sigma_n(w)<\infty$)
 and the lower semicontinuity of 
$\beta$ below.  
 We choose a positive number $\varepsilon\le 1 $, an open neighborhood $O$ of identity in $C_L(G)$
 and finite number of elements 
$f_1, \ldots, f_k\in F$ with $F\subset Of_1\cup \cdots \cup O f_k$
  so that the following holds:  for any $y\in NX_{l_0}$ and any $f_i$ 
  there exists at most one $v\in \mathfrak t$ with $\|v\|\le \varepsilon$ and $y\in \exp (v)Of_iY$. 
For any $y\in X$ and $1\le i\le k$, we set 
\[
\beta_i(y)=\left\{\begin{array}{cl}
\|v\|^{-\vartheta} & \mbox{if }y\in \exp(v)Of_i Y\cap N X_{l_0} \mbox{ with } v \in \mathfrak t \mbox{ and } \|v\|\le\varepsilon \\
\varepsilon ^{-\vartheta} & \mbox{otherwise}. 
\end{array}
\right .
\]
We let  $\beta(y)=\beta_1(y)+\cdots+\beta_k(y)$ 
which has  the following properties:
\begin{enumerate}[(I)]
\item $\beta$ is lower semicontinous. 
\item $\beta $ is Lipschitz with respect to the action of  $G$ on $NX_{l_0}$, i.e.~for every 
 $g\in G$ and $y\in  NX_{l_0}$ one has  
 \begin{align}
 \label{eq;lip}
  \beta(gy)\le\max\{ \|g^{-1}\|, \|g\| \}^\vartheta \beta(y).
 \end{align}
\item $\beta(y)=\infty$ if and only if $y\in NX_{l_0}\cap (\cup_{i=1}^k Of_i)Y$.
\end{enumerate}
Note that
(\ref{eq;lip})  holds with $\beta$ replaced by $\beta_i$. Also, $\beta(x)<\infty$
since $Gx$ is dense in $X$.

The value of $\beta$ on $X\setminus  NX_{l_0}$ will not  play an important role in the proof, since we 
will consider the first return cocycle to $X_{l_0}$. 
Our strategy is in principle the same as that of the  previous section. The key ingredient is 
Lemma \ref{lem;new eq} which is a variant of Lemma \ref{lem;basic ineq}.

\begin{lem}\label{lem;lcontract}
Let 	$r: I^m\to \mathbb N\setminus 0$ be a bounded  measurable function. 
Then there exists $b_0>0$ depending on 
the upper bound of the function $r$ such that 
for every $y\in N X_{l_0}$  
\[
\int_{I^m}\beta(g_{r(w)t} u(w)y) \dd w\le  e^{-t \vartheta \lambda_0 } \beta (y)+b_0.
\]
\end{lem}
\begin{proof}
Since  the function $r$ is bounded, there exists $C\ge 1$ such  that 
$\max\{\|g_{r(w)t}u(w)\|,\|(g_{r(w)t}u(w))^{-1}\| \}\le C$ for every $w\in I^m$. 
For $1\le i\le k$, let  $$J_i=\{w\in I^m: \beta_i (g_{r(w)t}u(w)y)\ge (C\varepsilon^{-1} )^{\vartheta} \}$$ 
and $J'_i=I^m \setminus J_i$.
If $w\in J_i$, then $y=\exp({v_i}) Of_i Y$ with $\beta_i (y)=\|v_i\|^{-\vartheta}$ and
 $\beta_i (g_{r(w)t}u(w)y)=\|g_{r(w)t}u(w)v_i\|^{-\vartheta}$. So by  (\ref{eq;lcon})
\[
\int_{J_i}\beta_i(g_{r(w)t} u(w)y)\dd w  \le\int_{I^m} \|g_{r(w)t}u(w)v_i\|^{-\vartheta}\dd w \le e^{-t\vartheta \lambda_0}\beta _i(y).
\]
By the splitting $\int_{I^m} \beta_i=\big(\int_{J_i}+\int_{J_i'}\big)\beta_i$, one has
\[
\int_{I^m}\beta_i(g_{r(w)t} u(w)y)\dd w\le e^{-t\vartheta \lambda_0}\beta _i(y)+
2^m(C\varepsilon^{-1} )^{\vartheta}. 
\] 
Since $\beta=\sum_{i=1}^k \beta_i$, 
the lemma holds by taking 
$b_0=2^mk(C\varepsilon^{-1} )^{\vartheta}$.
\end{proof}

\begin{lem}\label{lem;new eq}
There exists  $b_0>0$ such that  for  any  $n\in \mathbb N$, $w_0\in I^m$ with $\sigma_n(w_0)<\infty$
 and $J=I(\sigma_n,w_0)$ one has 
\begin{equation}\label{eq;goalsingular}
\int_{J}\beta (g_{\sigma_{n+1}(w)t}u(w)x)\dd w\le e^{-t\vartheta \lambda_0/2} \int_{J}
\beta(g_{\sigma_n(w)t}u(w)x)\dd w +b_0|J|.
\end{equation}
\end{lem}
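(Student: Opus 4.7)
The plan is to reduce the integral over $J = I(\kappa_n, w_0)$ to an integral over $I^m$ with base point $y_0 := g_{\kappa_n t}u(w_0)x \in F_0 X_{l_0} \subset N X_{l_0}$, and then apply a truncated version of Lemma~\ref{lem;lcontract} to $r(w) := \kappa_{n+1}(w) - \kappa_n(w_0)$. Since $U$ is abelian, $u(w) = u(w-w_0)u(w_0)$ and the commutation $g_{\kappa_n t}u(w-w_0)g_{-\kappa_n t} = u(v')$ with $v'_i = e^{\kappa_n t b_i}(w-w_0)_i$ gives $g_{\kappa_n t}u(w)x = u(v')y_0$, and hence $g_{\kappa_{n+1}(w)t}u(w)x = g_{r(w)t}u(v')y_0$. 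As $w$ ranges over $J$, $v'$ ranges over a box $\tilde J \subset I^m$ of side length $1$ in each coordinate, and $dw = |J|\,dv'$; extending $r$ by $p$ outside $\tilde J$ yields
\[
\int_J \beta(g_{\kappa_{n+1}(w)t}u(w)x)\,dw \le |J|\int_{I^m}\beta(g_{r(v')t}u(v')y_0)\,dv'.
\]

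Next I split the right-hand integral at a truncation level $Q \ge Q_0 + 2p$ to be chosen. With $R(v') := \min(r(v'), Q) \in [p, Q]$ bounded, Lemma~\ref{lem;lcontract} applies to $R$ and, after tightening the contraction constant in (\ref{eq;lcon}) from $1/4$ to $1/(6C_2)$ by choosing $p$ slightly larger (permitted by Lemma~\ref{lem;estimate}), where $C_2$ is the Lipschitz constant of $\beta$ on $NX_{l_0}$ for the compact displacement $u(\pm v')$ from property (II), it gives
\[
\int_{\{r\le Q\}}\beta(g_{rt}u(v')y_0)\,dv' \le \int_{I^m}\beta(g_{Rt}u(v')y_0)\,dv' \le \tfrac{1}{6C_2}\beta(y_0)+b'(Q).
\]
The main obstacle is the tail $\sum_{q>Q}\int_{\{r=q\}}\beta(g_{qt}u(v')y_0)\,dv'$, since near a return time the orbit can approach $\bigcup_i Of_iY$ where $\beta$ may be infinite and the pointwise control used inside Lemma~\ref{lem;lcontract} fails.

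To handle this tail I would adapt the decomposition in the proof of Lemma~\ref{lem;lcontract} one value of $q$ at a time: for each $i$ partition $\{r=q\}$ into $J_i^{(q)} := \{v' : \beta_i(g_{qt}u(v')y_0) \ge (e^{q\vartheta'}\varepsilon^{-1})^{\vartheta}\}$ and its complement. On $J_i^{(q)}$ the unique small pullback vector $v_i^y \in \mathfrak t$ at $y_0$ with $\|v_i^y\|^{-\vartheta} = \beta_i(y_0)$ is well defined (by the uniqueness clause in the construction of $\beta$), and Lemma~\ref{lem;estimate} applied to the $G$-module $\mathfrak t$ (which has no $G$-fixed vectors, being complementary to $\mathfrak s+\mathfrak c$) with the slightly larger exponent $2\vartheta < \vartheta_0$, combined with Cauchy--Schwarz against $|\{r=q\}|\le e^{-q\vartheta_0/2}$ from (\ref{eq;lhard}), yields $\int_{J_i^{(q)}} \beta_i \le C'\beta_i(y_0)\,e^{-q\vartheta_0/4}$. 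On the complement, the pointwise bound $\beta_i \le e^{q\vartheta'\vartheta}\varepsilon^{-\vartheta}$ together with $|\{r=q\}| \le e^{-q\vartheta_0/2}$ and the standing hypothesis $\vartheta_0/2-\vartheta'\vartheta>0$ give a geometrically summable contribution. Summing over $q > Q$ and taking $Q$ large then bounds the tail by $\tfrac{1}{6C_2}\beta(y_0) + \mathrm{const}$. Finally, using property (II) of $\beta$ in the form $\beta(y_0) \le C_2 \beta(u(v')y_0) = C_2 \beta(g_{\kappa_n t}u(w)x)$ for every $w \in J$ (so that $|J|\beta(y_0) \le C_2 \int_J \beta(g_{\kappa_n t}u(w)x)\,dw$) converts $\tfrac{1}{3C_2}\beta(y_0)|J|$ into $\tfrac{1}{3}\int_J \beta(g_{\kappa_n t}u(w)x)\,dw$ and yields the claimed inequality with $b''$ the resulting aggregate absolute constant.
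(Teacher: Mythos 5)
Your proposal follows the same skeleton as the paper's proof --- truncate $\kappa_{n+1}-\kappa_n$ at a level $Q$, treat the bounded part with the contraction of Lemma \ref{lem;lcontract}, and control the tail with the measure decay (\ref{eq;lhard}) --- but the execution differs at both steps. For the bounded part the paper does not pass to the single base point $y_0=g_{\kappa_n t}u(w_0)x$: it keeps the average over $J$ and uses the shadowing device of Lemma \ref{lem;shade} (with the truncated return time $r(w+w_1\cdot w_n')$), so that Lemma \ref{lem;lcontract} is applied at each point $g_{\kappa_n t}u(w)x$, $w\in J$. This avoids the extra Lipschitz factor $C_2$, so the constant $1/4$ of (\ref{eq;lcon}) suffices; your reduction to $y_0$ forces the strengthened contraction $1/(6C_2)$, i.e.\ a re-choice of $p$ after $C_2$ is known. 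That is legitimate ($C_2$ does not depend on $p$), but it must be folded into the standing choices at the start of the section, since $\kappa_i$ --- and hence the statement you are proving --- depends on $p$. For the tail the paper is cruder and simpler: the pointwise bound $\beta(g_{qt}y)\le e^{q\vartheta'\vartheta}\beta(y)$ from (\ref{eq;expgrow}), the comparison (\ref{eq;proof1}) to $w_0$, and (\ref{eq;lhard}), with $Q$ chosen so the resulting factor is $\le 1/12$; your Cauchy--Schwarz argument with Lemma \ref{lem;estimate} at exponent $2\vartheta$ (correctly noting that $\mathfrak t$ has no nonzero $G$-fixed vectors) also works, at the price of one more smallness condition on $\vartheta$, and your worry that the pointwise control ``fails'' near $\bigcup_i Of_iY$ is unfounded --- the norm-growth bound holds there as well, which is exactly what the paper exploits.

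There is one genuine inaccuracy you should repair. The rescaled box $\tilde J=\{(e^{\kappa_n t b_i}(w-w_0)_i)_i: w\in J\}$ is neither of side length $1$ nor contained in $I^m$: the sides of $I(\kappa_n,w_0)$ have lengths between $e^{-\kappa_n t b_i}$ and $2e^{-\kappa_n t b_i}$ and $w_0$ need not be central, so $\tilde J$ is only a box with sides in $[1,2]$ sitting inside $[-2,2]^m$. Consequently the step ``extend $r$ by $p$ outside $\tilde J$ and dominate by $\int_{I^m}$'' is false as written, and all subsequent applications of (\ref{eq;lcon}), Lemma \ref{lem;lcontract}, Lemma \ref{lem;estimate} and (\ref{eq;expgrow}) --- each formulated for $I^m$ --- are over the wrong domain. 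The fix is easy: either take the base point to be the centre of $J$ (any point of $J$ serves, since $g_{\kappa_n t}u(w)x\in F_0X_{l_0}\subset NX_{l_0}$ for all $w\in J$), which places $\tilde J$ inside $I^m$, or observe that the quoted lemmas hold on $[-2,2]^m$ with adjusted constants. Note that this domain bookkeeping is precisely what the paper's averaged shadowing inequality is designed to sidestep.
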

Remark: Recall that  $\mathcal C_n$ is the smallest sigma algebra of $I^m$ generated by   $I(\sigma_i, w)$  for
  $0\le i\le n$ and $w\in I^m$ with $\sigma_i(w)<\infty$.
Since modulo null sets every element  of $\mathcal C_n$ is a disjoint  union of at most countably many   sets of the form $I(\sigma_n, w)$, the lemma
also holds for $J\in \mathcal C_n$.

\begin{proof}
Since the function $\sigma_n(w)$ is fixed on $J$ we simply write $\sigma_n$ for $\sigma_n(w)$.
Here  $\sigma_{n+1}(w)-\sigma_n$ varies for different $w$ and might be unbounded, so we can not
 use the  idea of shadowing Lemma \ref{lem;shade} 
directly. To overcome this difficulty we fix a positive integer $Q$ which will be specified 
afterwards  and 
 define a truncation (and extend it to all the $\R^m$)   for the function $\sigma_{n+1}(w)-\sigma_n$ by 
\[
 r(w)=
\left\{
\begin{array}{cl}
\sigma_{n+1}(w)-\sigma_n & \mbox{if } w\in J \mbox{ and } \sigma_{n+1}(w)-\sigma_n<  Q \\
Q & \mbox{for all other } w\in \R^m.
\end{array}
\right.
\]

Note that for all $w\in J$, the point $\beta(g_{\sigma_nt}u(w)x)\in NX_{l_0}$.
So the  Lipschitz property  of $\beta$ in (\ref{eq;lip}) implies that there  exists
$C\ge 1$ such that  
\begin{equation}\label{eq;proof1}
C^{-1}\beta(g_{\sigma_nt}u(w_0)x)\le \beta(g_{\sigma_nt}u(w)x)\le C \beta(g_{\sigma_nt}u(w_0)x) 
\end{equation}
 for all 
$w\in J$.
 We take $Q $ to be a positive  integer such that 
\begin{equation}\label{eq;upperQ1}
C^2C_0\frac{ e^{-Qt(\lambda/2-\vartheta'\vartheta)}} {1-e^{-t(\lambda/2-\vartheta'\vartheta)}}
+e^{-t\vartheta\lambda_0 }  \le e^{-t\vartheta \lambda_0/2}
\end{equation}
where $C_0\ge 1$ is the constant satisfying (\ref{eq;lhard}). 
Let $b_0>0$ be the constant given by Lemma \ref{lem;lcontract} with respect
 to the truncated   function $r$. 
 Note that $C$ and hence $b_0$ does not depend on $n$ or $w_0$.

We divide $J$ into two sets: 
 \[
J_1=\{w\in J:  r(w)
<  Q\}\quad 
\mbox{and}\quad J_2=J\setminus J_1=\{w\in J:  r(w)= Q\}.
\quad
\]
Then \begin{align}\label{eq;tough}
 \int_{J_1}\beta (g_{\sigma_{n+1}(w)t}u(w)x)\dd w       \le  \int_{J}\beta (g_{r(w)t+\sigma_nt}u(w)x)\dd w.
 \end{align}
Let $w_n'=(e^{-\sigma_ntb_1}, \ldots, e^{-\sigma_ntb_m})$ where $b_i>0$ $(1\le i\le m)$
satisfy (\ref{eq;Recall}).
Similar to the proof of  Lemma \ref{lem;shade}, for every $\tilde w\in J$ the measure of   $J(\tilde w):=\{ w_1\in I^m: \tilde w-w_1 \cdot w_n'\in J \}$ (here $w_1\cdot w_n'$ 
is  the usual inner product on $\mathbb R^m$)
is at least one. So the right hand side of (\ref{eq;tough}) is  
\begin{eqnarray*}
 &\le& 
 \int_{J}\int_{I^m}\beta(g_{ r( w+w_1 \cdot w_n')t+\sigma_nt}u(w+w_1 \cdot w_n')x)\dd w_1\dd w\\
 &=& 
\int_{J}\int_{I^m}\beta(g_{ r( w+w_1 \cdot w_n')t}u(w_1)g_{\sigma_nt}u(w)x)\dd w_1\dd w \\
  &\le&   e^{-t\vartheta \lambda_0}\int_{J}
\beta(g_{\sigma_nt}u(w)x)\dd w+b_0|J|,
\end{eqnarray*}
where in the last inequality we use
 Lemma \ref{lem;lcontract}.

For every integer $q\ge Q$, let  $B_q=\{w\in J: \sigma_{n+1}(w)-\sigma_n=q\}$.
In view of  (\ref{eq;expgrow}) and the Lipschitz property  of $\beta$ in (\ref{eq;lip})  we have
\begin{eqnarray*}
\int_{J_2}\beta (g_{\sigma_{n+1}(w)}u(w)x)\dd w & \le &  
\sum _{q\ge  Q}\int_{B_q}e^{q\vartheta'\vartheta t} 
\beta(g_{\sigma_{n}t}u(w)x)\dd w \\
\mbox{by (\ref{eq;proof1})}\qquad& \le & C  
\sum _{q\ge  Q}\int_{B_q}e^{q\vartheta'\vartheta t} 
\beta(g_{\sigma_{n}t}u(w_0)x) \dd w \\
\mbox{by (\ref{eq;lhard})}\qquad
&\le & CC_0\sum_{q\ge Q} e^{-qt(\lambda_0/2-\vartheta'\vartheta)}\cdot\int_J\beta(g_{\sigma_nt}u(w_0)x)\dd w \\
\mbox{by (\ref{eq;proof1})}\qquad
&\le & C^2C_0 \frac{ e^{-Qt(\lambda_0/2-\vartheta'\vartheta)}} {1-e^{-t(\lambda_0/2-\vartheta'\vartheta)}}\cdot
\int_J\beta(g_{\sigma_nt}u(w)x)\dd w.
\end{eqnarray*}
In view of  (\ref{eq;upperQ1}),
the lemma follows from the estimate of the left hand side of (\ref{eq;goalsingular})  on $J_1$ an $J_2$. 
\end{proof}

We fix a positive number $l$ so that 
\begin{align}\label{eq;wait}
&\frac{ b_0}{l}+e^{-t\vartheta \lambda_0/2}\le e^{-t\vartheta \lambda_0/4},\\
\label{eq;similarY}
&x\in X^Y_l \quad \mbox{where }X^Y_{l}=\{y\in  u([-2,2]^m)X_{l_0}: \beta(y)\le l\}.
\end{align}
Since $\beta$ is lower semicontinuous, the set $X^Y_l$ is a closed subset of 
$u([-2,2]^m)X_{l_0}$.  Hence,  $X^Y_l$ is a compact subset of $X$ with $X^Y_l\cap FY=\emptyset $.

The  $i$th return time to $X_{l}^Y$  is  the function $\kappa_i:I^m\to { \mathbb N}\cup \{\infty\}$ which will be defined 
inductively. 
For all $w\in I^m$ we let 
$\kappa_0(w)=0$ and $\eta_0(w)=0$.
Suppose $\kappa_n$ and $\eta_n$ have been defined for $n<i$. 
Then we define 
\begin{align}
\label{eq;label}
\eta_i(w)&=\inf \{ j> \eta_{i-1}(w): g_{\sigma_j(w)t}u(I(\sigma_j,w))x\cap  X^Y_l\neq \emptyset  \}, \\
\kappa _i(w)&= \left\{
\begin{array}{ll}
\sigma_{\eta_i(w)}(w) & \mbox{if }\eta_i(w)<\infty\\
\infty &\mbox{otherwise}. 
\end{array}
\right.
\end{align}
If $\kappa_i(w)<\infty$, then  $\eta_i(w)$ is the index  $j$  such that $\kappa_i(w)=\sigma_j(w)$. 
To simplify the notation,
we write 
 \[
I({\kappa_n}, w)=I_{{\kappa_n(w)}}(w).
\]

\begin{lem}\label{lem;morelem}
	There exists  positive numbers  $Q\in \Z$  and $\lambda_1$ such that for any inegers
	$i\ge 0, j>0 $  and  any  $w_0\in I^m$ with $\sigma_i(w_0)<\infty$
	the measure of the set 
	\begin{equation}\label{eq;indexplane}
	A_{i, j}(w_0)=\{w\in I(\sigma_i,w_0): \sigma_{i+j}(w)-\sigma_i(w)\ge Qj\}
	\end{equation}
	is at most  $e^{ -j\lambda_1}|I(\sigma_i,w_0)|$.
\end{lem}
\begin{proof}
	For fixed $i$ and $w_0$, we  use Lemma \ref{lem;main}  for $W=I(\sigma_i,w_0), \mu=\frac{1}{|I(\sigma_i,w_0)|}\mathrm{Leb}, \xi_j=\sigma_{i+j}, \mathcal F_j=\mathcal C_{i+j}$ and $\varepsilon=1$.
 The assumption (\ref{eq;basic exp}) holds by 
	(\ref{eq;lhard}). 
	So there exist positive numbers  $\lambda_1$ and  $Q\in 2\Z$ 
	which do not  dependent on $w_0$ or $i$ in view of the remark of Lemma \ref{lem;main}  such that
	the measure of the set 
	\[
	A_{i,j}' (w_0):= \left \{w\in I(\sigma_i,w_0): \sum_{s=1}^j\mathbbm{1}_{\frac{Q}{2}}
	(\sigma_{i+s}(w)-\sigma_{i+s-1}(w))\ge j\right\} 
	\]
	is at most  $e^{-j\lambda_1}|I(\sigma_i,w_0)|$.
	Suppose that $w\in A_{i,j}(w_0)$ for some 
	$w\in I(\sigma_i,w_0)$ and $\sigma_{i+j}(w)<\infty$, we claim that  $w\in A'_{i,j} (w_0)$.
	If not, then
	\begin{align*}
	\sigma_{i+j}(w)-\sigma_i(w)&=\sum_{s=1}^{j}\sigma_{i+s}(w)-\sigma_{i+s-1}(w)\\
	& <  j+\frac{Qj}{2} \le Qj,
	\end{align*}
	which contradicts $w\in A_{i,j}(w_0)$. 
	Therefore,  $|A_{i, j}(w_0)|\le |A'_{i, j}(w_0)|\le e^{-j\lambda_1}|I(\sigma_i,w_0)|$. 

\end{proof}

\begin{lem}\label{lem;new exp}
There exists $C_1\ge 1$ such that for any $ n,q\in \mathbb N$ with $q>0$  and $w_0\in I^m$
with $\kappa_n(w_0)<\infty$ the measure of the  set 
\[
B_{n,q}(w_0)=\{w\in I({\kappa_n}, w_0): \eta_{n+1}(w)-\eta_{n}(w)\ge q \} 
\]
is at most 
$ C_1e^{-qt\vartheta\lambda_0/4}|I({\kappa_n},w_0)|$.
\end{lem}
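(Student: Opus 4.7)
The plan is to adapt the proof of Lemma~\ref{lem;pre expon} to this new setting, with the height function $\beta$ (measuring proximity to the singular set $FY$) replacing $\alpha$, the modified stopping times $\kappa_n$ replacing $\sigma_n$, and the linear inequality Lemma~\ref{lem;new eq} replacing Lemma~\ref{lem;basic ineq}.

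Fix $w_0$ and $n$ with $\tau_n(w_0)<\infty$ and write $J_q:=B_{n,q}(w_0)$ for brevity. A simple induction on $q$ shows that $J_q\in\mathcal C_{\tau_n+q-1}$, because whether $\tau_n+j$ is a $\tau$-stopping time is determined by $\mathcal C_{\tau_n+j}$. The key pointwise estimate is that for $q\ge 2$ and $w\in J_q$ one has $\beta\bigl(g_{\kappa_{\tau_n+q-1}(w)t}u(w)x\bigr)>l$. Indeed, since $\tau_n+q-1<\tau_{n+1}$, the index $\tau_n+q-1$ is not a $\tau$-stopping time, so no $w_1\in I(\kappa_{\tau_n+q-1},w)$ satisfies $g_{\kappa_{\tau_n+q-1}(w)t}u(w_1)x\in X_l^Y$; on the other hand, $\kappa_{\tau_n+q-1}$ is a $\sigma$-stopping time, so the compactness in (\ref{eq;compactG}) forces $g_{\kappa_{\tau_n+q-1}(w)t}u(w)x\in F_0 X_{l_0}\subset NX_{l_0}$, and the definition (\ref{eq;similarY}) then yields $\beta>l$.

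Next, define $s_q:=\int_{J_{q+1}}\beta\bigl(g_{\kappa_{\tau_n+q}(w)t}u(w)x\bigr)\,dw$ for $q\ge 0$ and apply Lemma~\ref{lem;new eq} (in the form given in the remark following it, extending the inequality to $\mathcal C_m$-measurable sets) with $J=J_q\in\mathcal C_{\tau_n+q-1}$. Combined with $J_{q+1}\subseteq J_q$, this yields $s_q\le\tfrac{1}{3}s_{q-1}+b''|J_q|$. The pointwise estimate above gives $l\,|J_q|\le s_{q-1}$ for $q\ge 2$, and since $b''/l<1/12$ by our choice of $l$, we conclude $s_q\le\tfrac{1}{2}s_{q-1}$ for $q\ge 2$. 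To start the induction, the Lipschitz property~(II) of $\beta$ on $NX_{l_0}$ together with the fact that $\tau_n(w_0)<\infty$ forces $\beta\le l$ at some point of $I(\kappa_{\tau_n},w_0)$, giving $s_0\le C_2 l\,|I(\kappa_{\tau_n},w_0)|$ for the Lipschitz constant $C_2$. Iterating the recursion yields $s_q\le C\cdot 2^{-q}|I(\kappa_{\tau_n},w_0)|$ for a constant $C$ independent of $n$ and $w_0$, and Chebyshev together with the lower bound $\beta>l$ converts this into $|J_q|\le (C/l)\cdot 2^{-q+1}|I(\kappa_{\tau_n},w_0)|$, which yields the claimed inequality with $\vartheta_1$ slightly less than $\log 2$ and $Q_1$ large enough to absorb the multiplicative constant.

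The main obstacle I anticipate is the careful bookkeeping in verifying $J_q\in\mathcal C_{\tau_n+q-1}$ and cleanly handling the distinction between the $\tau$-index jump and the underlying $\kappa$-time jump; the present lemma concerns only the index difference $\tau_{n+1}-\tau_n$, so the actual size of $\kappa_{\tau_n+q}-\kappa_{\tau_n}$ (which is itself controlled by (\ref{eq;lhard})) does not need to enter the argument here.
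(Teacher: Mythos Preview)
Your proposal is correct and follows essentially the same approach as the paper's own proof, which defines the same $s_q$, invokes Lemma~\ref{lem;new eq} on the $\mathcal C_{i+q-1}$-measurable set $B_q$ to obtain $s_q\le \tfrac13 s_{q-1}+b''|B_q|\le \tfrac12 s_{q-1}$, and then refers back to the argument of Lemma~\ref{lem;pre expon}. You have in fact supplied more detail than the paper: the explicit justification that $\beta>l$ on $J_q$ for $q\ge 2$ (because $\kappa_{\tau_n+q-1}$ is a $\sigma$-time forcing the point into $NX_{l_0}$, while failure to be a $\tau$-time forces $\beta>l$), and the explicit handling of the base case $s_0$ via the Lipschitz property~(II), are both left implicit in the paper.
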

\begin{proof}
We fix $n, w_0$ and set  $B_q =B_{n, q}(w_0), i=\eta_{n}(w_0), \sigma_i=\sigma_i(w_0)$. Let 
\[
s_q=\int_{B_{q+1}}\beta(g_{\sigma_{{i+q}}(w)t}u(w)x)\dd w \qquad (q\ge 0).
\] 
Note that for all $w\in B_{q+1}\ (q\ge 1) $ with $\sigma_{i+q}(w)<\infty$ one has  $\beta(g_{\sigma_{i+q}(w)t}u(w)x)>l$.  So 
\begin{align}\label{eq;key1}
s_{q}\ge l |B_{q+1}|\quad  \mbox{ for all } q\ge  1.
\end{align}

Note that $B_{q}\in\mathcal C_{i+q-1}$ for $q\ge 1$,  so Lemma \ref{lem;new eq} implies that
 \begin{align}\label{eq;key}
s_q &\le \int_{B_{q}}\beta(g_{\sigma_{i+q}(w)t}u(w)x)\dd w \\
& \le  e^{-t\vartheta \lambda_0/2}\int_{B_{q}}\beta(g_{\sigma_{i+q-1}(w)t}u(w)x)\dd w+b_0|B_{q}|.\notag
\end{align}

By (\ref{eq;wait}), (\ref{eq;key1}) and   (\ref{eq;key})  one has
$
s_q\le e^{-t\vartheta\lambda_0/4}s_{q-1}
$
for all $q\ge 2$.
An induction on $q$ for $q\ge 2$ and (\ref{eq;key}) for $q=1$ gives
\begin{align}
\label{eq;key2}
s_q\le e^{-(q-1)t\vartheta\lambda_0/4} s_1\le  e^{-qt\vartheta\lambda_0/4}  s_0+b_0
e^{-(q-1)t\vartheta\lambda_0/4}|B_1|.
\end{align}
Note that $B_1= I(\sigma_{i}, w_0)$ and there exists $w'\in I(\sigma_{i}, w_0)$ with 
$$\beta(g_{\sigma_{i}t}u(w')x)\le l.$$ 

 By the Lipschitz property (\ref{eq;lip}), there exists $C\ge 1$ such that 
\begin{align}\label{eq;key3}
s_0=\int_{I(\sigma_{i}, w_0)} \beta(g_{\sigma_{i}t}u(w)x)\dd w\le C l |I(\sigma_{i}, w_0)|.
\end{align}
So by (\ref{eq;wait}), (\ref{eq;key1}), (\ref{eq;key2}) and (\ref{eq;key3}), 
for all $q\ge 1$ 
\[
|B_{q+1}|\le (C+ e^{t\vartheta \lambda_0/4}b_0/l) e^{-qt\vartheta\lambda_0/4} |I(\sigma_{i}, w_0)|\le 2C e^{-qt\vartheta\lambda_0/4} |I(\sigma_{i}, w_0)|.
\]
So the conclusion holds for $C_1=2C e^{t\vartheta\lambda_0/4}$.

\end{proof}

\begin{lem}\label{lem;combine}
There exist positive numbers  $\lambda $  and $C\ge 1$ such that for all $n\in \mathbb N, q\ge 1$ and $w_0\in I^m$ with $\kappa_n(w_0)<\infty$
the measure of the  set 
\begin{equation}\label{eq;really need}
J_{n,q}'(w_0)=\{w\in I({\kappa_n},w_0): {\kappa_{n+1}(w)}-{\kappa_n(w)}\ge q\}
\end{equation}
 is at most 
$C e^{-q\lambda}|I({\kappa_n},w_0)|$.
\end{lem}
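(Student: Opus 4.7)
The plan is to decompose $A_{n,q}(w_0)$ according to whether the index increment $j(w):=\tau_{n+1}(w)-\tau_n(w)$ is large or small relative to $q$. Write $i=\tau_n(w_0)$ and $J=I(\kappa_{\tau_n},w_0)$; by construction $i$ is constant on $J$, and $J$ is an atom of $\mathcal B_{\kappa_i}$.

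I first fix a small constant $\alpha\in(0,1)$ (to be pinned down below) and split $A_{n,q}(w_0)=A'\sqcup A''$ with $A'=\{w\in A_{n,q}(w_0):j(w)\ge\alpha q\}$ and $A''=\{w\in A_{n,q}(w_0):j(w)<\alpha q\}$. On $A'$ the condition $j(w)\ge\alpha q$ places $w$ inside $B_{n,\lceil\alpha q\rceil}(w_0)$, so Lemma \ref{lem;new exp} yields $|A'|\le e^{-\vartheta_1\alpha q}|J|$ as soon as $\alpha q\ge Q_1$; this piece already decays exponentially in $q$.

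The substantive estimate is for $A''$. For each $1\le j<\alpha q$, the set $\{w\in J:j(w)=j\}\cap A_{n,q}(w_0)$ is contained in $\{w\in J:\kappa_{i+j}(w)-\kappa_i(w)\ge q\}$. Since Lemma \ref{lem;morelem} only bounds this event by $e^{-\vartheta_2 j}|J|$ (independent of $q$), summing over $j$ yields only a constant factor, which is insufficient. Instead I would re-run the Chernoff argument from the proof of Lemma \ref{lem;main} directly on the $\kappa$-gaps. Lemma \ref{lem;pre expon} yields a constant $M>1$ with
\[
E\!\left[e^{(\vartheta_0/2)(\kappa_{i+s}-\kappa_{i+s-1})}\,\bigm|\,\mathcal B_{\kappa_{i+s-1}}\right]\le M
\]
for every $s\ge 1$. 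Iterating via the tower property and integrating over the atom $J\in\mathcal B_{\kappa_i}$ gives $\int_J e^{(\vartheta_0/2)(\kappa_{i+j}(w)-\kappa_i(w))}\,dw\le M^{j}|J|$, so Chebyshev produces $|\{w\in J:\kappa_{i+j}(w)-\kappa_i(w)\ge q\}|\le M^j e^{-\vartheta_0 q/2}|J|$. Summing over $1\le j\le\lfloor\alpha q\rfloor$ yields
\[
|A''|\le \frac{M}{M-1}\,\exp\!\bigl((\alpha\log M-\vartheta_0/2)q\bigr)\,|J|,
\]
which is exponentially small in $q$ once $\alpha$ is chosen with $\alpha\log M<\vartheta_0/4$.

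Combining the two estimates and taking $\vartheta_3=\tfrac12\min(\vartheta_1\alpha,\vartheta_0/4)$ with $Q_3$ large enough to absorb constants and to satisfy $\alpha Q_3\ge Q_1$ will give $|A_{n,q}(w_0)|\le e^{-\vartheta_3 q}|J|$ for $q\ge Q_3$. The main technical obstacle is the bookkeeping of the Chernoff iteration relative to the stopping-time filtration generated by the $\kappa_i$: one has to verify that the conditional bound of Lemma \ref{lem;pre expon} yields a genuine submartingale-style inequality after integration against atoms $J\in\mathcal C_n$, so that the induction on $j$ closes cleanly without introducing $w_0$-dependent constants.
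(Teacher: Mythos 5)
Your decomposition of $A_{n,q}(w_0)$ into a part where $\tau_{n+1}-\tau_n$ is large (handled by Lemma~\ref{lem;new exp}) and a part where it is small is exactly the paper's decomposition (the paper takes $\alpha=1/Q_2$). The divergence, and the gap, is in how you treat the ``small $j$'' part $A''$. You claim Lemma~\ref{lem;morelem} is insufficient because ``summing over $j$ yields only a constant,'' but this overlooks a simple monotonicity fact: the sequence $n\mapsto\kappa_n(w)$ is nondecreasing. Hence if $w\in A''$ has $\tau_{n+1}(w)-\tau_n(w)=j<j_0:=\lfloor q/Q_2\rfloor$ and $\kappa_{\tau_{n+1}(w)}-\kappa_{\tau_n(w)}\ge q$, then with $i=\tau_n(w_0)$ we automatically get $\kappa_{i+j_0}(w)-\kappa_i(w)\ge\kappa_{i+j}(w)-\kappa_i(w)\ge q\ge Q_2 j_0$, i.e.\ $w\in C_{i,j_0}(w_0)$. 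So there is nothing to sum: the \emph{entire} set $A''$ is contained in the single event $C_{\tau_n(w_0),\lfloor q/Q_2\rfloor}$, to which Lemma~\ref{lem;morelem} applies directly and gives $|A''|\le e^{-\vartheta_2\lfloor q/Q_2\rfloor}|J|$, exponentially small in $q$. This is the paper's argument, and it is essentially one line once the monotonicity is noticed.

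Your workaround---re-running a Chernoff argument on the $\kappa$-gaps---is in spirit what Lemma~\ref{lem;morelem} already packages (it is proved via Lemma~\ref{lem;main}, which is exactly such a Chernoff bound), so you are re-deriving available machinery, and you flag the filtration bookkeeping as a concern that Lemma~\ref{lem;morelem} already handles for you. There is also a small but genuine technical error in the re-derivation: from the tail bound $e^{-q\vartheta_0/2}$ in (\ref{eq;lhard}) you cannot take the Chernoff exponent to be $\vartheta_0/2$ itself, since $\sum_q e^{(\vartheta_0/2)q}\,\mu(\kappa_{n+1}-\kappa_n=q)$ is borderline and need not be finite; you must take a strictly smaller exponent, say $\vartheta_0/4$, which also propagates into your condition on $\alpha$ (it should be something like $\alpha\log M<\vartheta_0/8$). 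With that correction and the filtration details filled in, your route would close, but it is substantially longer than the intended one-step application of Lemma~\ref{lem;morelem}.
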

\begin{proof}
Let $Q\in \Z, \lambda_1, C_1\ge 1$ are positive numbers so that Lemmas \ref{lem;morelem} and \ref{lem;new exp} hold. 
We show that for  $\lambda=\frac{1}{Q}\min\{\lambda_1, t\vartheta \lambda_0/4  \}$ and  $C=2C_1 e^{Q\lambda}$ 
\begin{align}\label{eq;to finish}
|J_{n,q}'(w_0)|\le C e^{-q\lambda}|I({\kappa_n},w_0)|.
\end{align}
We fix $w_0,n$ and write $i=\eta_n(w_0)$. 
Note that $Ce^{-q\lambda}\ge 1$ for all $q< Q$.
So (\ref{eq;to finish})  holds trivially for $q<Q$. Now assume $q\ge Q$. 
Recall that $\lfloor q/Q\rfloor$ is the largest integer less than or equal to $q/Q$. 
We claim that \[
J_{n,q}'\subset A_{i, \lfloor q/Q\rfloor }\cup B_{n, \lfloor q/Q\rfloor}.
\]

Suppose $w\in I(\kappa_n, w_0) $ but  $w\not\in  B_{n, \lfloor q/Q\rfloor}(w_0)$, then $\eta_{n+1}(w)-\eta_n(w)< 
\lfloor q/Q\rfloor$. If we also have $w\not \in A_{i, \lfloor q/Q\rfloor }(w_0)$, then 
$$\sigma_{i+\lfloor q/Q\rfloor}(w)-\sigma_i(w)< Q \lfloor q/Q\rfloor.$$  
According to the relation between $\kappa$ and $\eta$, 
\begin{align*}
\kappa_{n+1}(w)-\kappa_n(w)&=\sigma_{\eta_{n+1}(w)}(w)-\sigma_{\eta_n(w)}(w)\\
&< \sigma_{i+\lfloor q/Q\rfloor}(w)-\sigma_i(w)\\
&< \lfloor q/Q\rfloor Q\le q,
\end{align*} 
which implies $w\not\in J_{n,q}'(w_0)$. This completes the proof of the claim. 

Therefore, by Lemmas \ref{lem;morelem} and \ref{lem;new exp},
\begin{align*}
|J_{n,q}'(w_0)|&\le  |A_{i, \lfloor q/Q\rfloor }(w_0)|+ |B_{n, \lfloor q/Q\rfloor}(w_0)|\\
&\le\big (e^{-\lfloor q/Q\rfloor  \lambda_1 }+C_1e^{-\lfloor q/Q\rfloor t\vartheta\lambda_0/4}\big)|I({\kappa_n},w_0)|\\
&\le C e^{-q\lambda  }|I({\kappa_n},w_0)|.
\end{align*}

\end{proof}

\begin{proof}[Proof of Proposition \ref{prop;singular}]
Let $\varepsilon_0$ be an arbitrary number with $1<\varepsilon_0<1/2$. 	
Let $l$ be a positive number such that (\ref{eq;wait}) and  (\ref{eq;similarY}) hold.
Recall that  $X_l^Y$ is the compact subset of $X$ defined in (\ref{eq;similarY})
 and
$\kappa_n\ (n\in \N)$ is the returning function to $X_l^Y$. 
Since $X_l^Y\cap FY=\emptyset$, so does $GX_l^Y\cap FY$.

Let $\mathcal F_i\ (i\in \N)$ be the sigma algebra on $I^m$ generated by $I(\kappa_n, w)$
for all $w\in I^m$ and $0\le n\le i$ with $\kappa_n(w)<\infty$. 
 Lemma \ref{lem;combine}  implies that the assumption of  Lemma \ref{lem;main} holds
 for $W=I^m, \mu=\frac{1}{2^m}\mathrm{Leb}, \xi_n=\kappa_n$ and the filtration $(\mathcal F_i)$. 
So  there exist  positive numbers
 $a_0<1$ and  $Q$  such that
the measure of the set 
\[
J_n=
\left \{w\in I^m: \frac{1}{n} \sum_{i=1}^n: \mathbbm{1}_Q
({\kappa_i(w)}-{\kappa_{i-1}(w)})\ge {\varepsilon_0}
\right \}
\]
is at most  $2^ma_0^n$. 
We claim that for 
$
K_0=\bigcup_{0\le s\le Qt}g_su([-2, 2]^m)X_l^Y$ and every positive integer $n$
\[
|\{w\in I^m: \mathfrak D^n_{K_0}(w)\le 1-\varepsilon_0  \}|\le 2^m a_0^n. 
\]
The proof of the claim is the same as the proof given in Lemma \ref{lem;discrete}, 
so we refer the readers there for details. 
By Lemma \ref{lem;continuousd}, there exist positive numbers $a<1$, $C\ge 1$
and a compact subset $K\subset GK_0\subset GX_l^Y$ such that for all $T>0$
\[
|\{w\in I^m:  \mathcal A_K^T(w)\le 1-2\varepsilon_0\}|\le Ca^T. 
\]
\end{proof}

\appendix
\section{}

Let $G, g_t, G^+$ be as  in  Theorem \ref{thm;1}. 
In this section we give two more  characterizations of $g_1$ expanding 
subgroups and  prove  Lemma \ref{lem;abelian}. 
\begin{lem}\label{lem;characterize}
Let $U$ be a connected  $\Ad$-unipotent subgroup of $G$ normalized 
by $\{g_t:t\in \mathbb R\}$. The following statements are equivalent:
\begin{enumerate}[\rm  (1) ]
	\item $U$ is $g_1$ expanding.
	\item For any nontrivial irreducible     representation $\rho:
	G\to \GL(V) $, the subspace of $U$-fixed vectors $V^U:=\{v\in V: \rho(u)v=v  \}$ is contained in $V^+$. 
\end{enumerate}
\end{lem}

\begin{proof}
 It follows easily from   the definitions that 	$(1)\Rightarrow(2)$.
  Now  we show  that $(2)\Rightarrow (1)$. 
 Let $v$ be a   nonzero vector   
 in  $V$. 
 Since $g_t\ (t\in \R)$ normalizes $U$, we have $\rho(g_t) V^U=V^U$. 
 Since $g_t$ is $\Ad$-diagonalizable, there is a 
 $\{g_t: t\in \mathbb R\}$ invariant subspace $W$   complementary to   $V^U$. 
 Let $\pi': V\to V^U$ be the  projection with respect to $W$. 
 It follows from \cite[Lemma 5.1]{s96}  that $\pi'(\rho(U)v) \neq \{0 \} $.
 Since $V^U \le V^+$ by (2), 
  the group $U$ is  $g_1$ expanding by definition.
 \end{proof}

\begin{lem}\label{lem;characterize+}
Let $U$ be a connected and closed  subgroup of $G$ normalized 
by $\{g_t:t\in \mathbb R\}$.
Then 
	 $U$ is $g_1$ expanding if and only if 
 $U\cap G^+$ is $g_1$ expanding. 
 \end{lem} 
Remark:
Note that $U\cap G^+$ is connected since it is  normalized by $\{g_t: t\in \R \}$. Indeed, for any   $u\in U\cap G^+$, the element
$g_{-t} u g_{t}$ belongs to the connected component of the identity of  $ U\cap G^+$
for $t$ sufficiently large. 

 \begin{proof}
 It is clear from the definitions that if  $U\cap G^+$  is $g_1$ expanding, then so
 is $U$.	
 
 We prove the other direction by contradiction. Assume now that $U$ is $g_1$
 expanding but $U\cap G^+$ is not. Then  there exists a nontrivial irreducible representation  $\rho: G\to \GL(V)$ and a nonzero vector $v\in V$ such that 
 $\pi_+(\rho(u)v)$  is zero for all $u\in U\cap G^+$. In other words, we have 
 $\rho( U\cap G^+)v\subset V^0\oplus V^-$. Let $G^{0-}\le G$  be the connected subgroup invariant under the conjugation of $\{g_t: t\in \R \}$ such that $G^{0-}\cap G^+$
 is the identity element. Then the subspace $V^0\oplus V^-$ is   $\rho(G^{0-})$ invariant. It follows that  $\rho(U\cap G^{0-})\rho( U\cap G^+)v\subset  V^0\oplus V^-$ and hence there is an open neighborhood   $N$ of  the identity element of $U$ such that $\rho(N)v\subset V^0\oplus V^-$. Note that $\GL(V)$ has  a natural structure
 of real algebraic groups and $\{  g\in \GL(V) : \pi_+(gv)=0\}$ is Zariski closed. 
 Also, note that the   Zariski closure of $\rho(N)$ contains $\rho(U)$ since $U$ is  connected. So $\rho(U)v 
 \subset V^0\oplus V^-$ which contradicts the assumption that $U$ is $g_1$ expanding.

 \end{proof}

The key ingredient of the proof of Lemma \ref{lem;abelian} is the following 
result about abstract root systems.

\begin{lem}\label{lem;root}
Let $\Phi$ be an irreducible abstract root system  and let
 $E=\mathrm{span}_{\mathbb R}\Phi $.
 Suppose that $E$  has dimension $n$ and 
 $\langle\cdot, \cdot\rangle $ is the inner product of $E$ invariant under the Weyl group of $\Phi$.    Let $\Phi^+\subset \Phi$ be a positive system dominated by 
some $\alpha\in E$, i.e.~$\langle \alpha, \beta\rangle\ge 0 $ for any $\beta\in \Phi^+$. Then there exists a basis  $\beta_1, \ldots, \beta_n\in \Phi^+$ of $E$   such that 
\begin{equation}\label{eq;linearly}
\alpha=c_1\beta_1+\cdots +c_n\beta_n
\end{equation}
where  $c_i\ge 0$ and $\beta_i+\beta_j\not\in \Phi^+$ for any $i,j$.
\end{lem}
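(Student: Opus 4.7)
The plan is to argue by induction on the rank $n=\dim E$, with the base case $n=1$ being immediate: $\Delta^+=\{\beta\}$ and $\alpha$ is a nonnegative multiple of $\beta$ by the domination hypothesis.

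For the inductive step I would take $\beta_1$ to be the highest root $\theta$ of $\Delta^+$, which is well defined since $\Delta$ is irreducible, and set $c_1=(\alpha,\theta)/(\theta,\theta)\ge 0$. The preliminary point to verify is that $\theta+\gamma\notin\Delta$ for every $\gamma\in\Delta\cap\theta^\perp$: if $\gamma\in\Delta^+$ then $\theta+\gamma$ would strictly exceed $\theta$ in the height order, contradicting maximality, while for $-\gamma\in\Delta^+$ the $\theta$-string through $\gamma$ has $p=q$ as a consequence of $(\gamma,\theta^\vee)=0$, and any nontrivial such string again manufactures a positive root strictly larger than $\theta$. Consequently $\theta$ automatically satisfies the desired sum-condition against every root of $\Delta_1:=\Delta\cap\theta^\perp$.

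Writing $\alpha'=\alpha-c_1\theta\in\theta^\perp$, one checks $(\alpha',\gamma)=(\alpha,\gamma)\ge 0$ for every $\gamma\in\Delta_1$, so $\alpha'$ dominates $\Delta_1^+:=\Delta_1\cap\Delta^+$. Decomposing $\Delta_1$ into irreducible components lying in mutually orthogonal subspaces and applying the inductive hypothesis component by component (with the appropriate projections of $\alpha'$) supplies positive roots forming the required basis for each component span. Sums of roots drawn from distinct components satisfy the condition automatically, because such a sum if a root would have to lie in $\Delta_1$ yet could not belong to any single irreducible component occupying a disjoint orthogonal subspace.

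The main technical obstacle is that $\Delta_1$ need not span $\theta^\perp$; this already fails for $\Delta=A_n$ with $n\ge 3$, where $\Delta_1\cong A_{n-2}$ has rank only $n-2$ inside the $(n-1)$-dimensional $\theta^\perp$. The naive induction therefore yields fewer than $n$ basis elements, and one must augment with additional positive roots that fill in the missing directions while preserving the pairwise sum-condition. Constructing this augmentation uniformly across all Cartan types is the principal difficulty, and is presumably where the strongly orthogonal systems of simple root systems mentioned by the author enter the argument; in type $A$, for instance, it admits an elegant description as a bipartite spanning-tree construction on the sign pattern of the coordinates of $\alpha$, and analogous constructions should handle the remaining classical and exceptional types case by case.
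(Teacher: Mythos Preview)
Your inductive strategy is different from the paper's, and you have correctly identified the genuine gap in it: once you peel off the highest root $\theta$, the subsystem $\Delta_1=\Delta\cap\theta^\perp$ need not span $\theta^\perp$, so the induction does not close. What you leave unresolved, however, is precisely the hard part. An ``augmentation'' by positive roots $\gamma$ lying outside $\theta^\perp$ is problematic on two fronts: you must simultaneously arrange $\gamma+\beta\notin\Delta$ against all the inductively produced $\beta$'s, and---more seriously---since such $\gamma$ are not orthogonal to $\theta$, writing $\alpha'=\alpha-c_1\theta\in\theta^\perp$ in a basis containing them forces cancellations, so nonnegativity of the coefficients is no longer automatic. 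Your spanning-tree remark for type $A$ is suggestive but not a proof, and the promise that ``analogous constructions should handle the remaining classical and exceptional types'' is exactly what has to be carried out.

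The paper does not attempt a uniform induction. Instead it splits according to whether a strongly orthogonal system of \emph{full rank} $n$ exists. When it does (types $B$, $C$, $D_{2k}$, $E_7$, $E_8$, $F_4$, $G_2$, by Oh's classification), that orthogonal system $\{\beta_1,\dots,\beta_n\}$ is the basis: orthogonality gives $c_i=(\alpha,\beta_i)/(\beta_i,\beta_i)\ge 0$ for free from the domination hypothesis, and strong orthogonality gives $\beta_i+\beta_j\notin\Delta$. For the three remaining types $A_n$, $D_{2k+1}$, $E_6$---precisely the cases where no full-rank strongly orthogonal system exists and your induction stalls---the paper constructs the basis by hand, using explicit inequalities among the simple-root coordinates $a_i$ of $\alpha$ that follow from $(\alpha,\alpha_i)\ge 0$. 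So the strongly orthogonal systems do not function as an augmentation device inside an inductive scheme; they give the answer outright in the favorable cases, and the unfavorable cases are handled separately by direct computation.
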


\begin{proof}
The only 
irreducible nonreduced  root systems are of types $(BC)_n$ $ {(n\ge 1)}$, see e.g.~\cite[\S II.8]{knapp}. If we take the  subsystem of $(BC)_n$ consisting of all the roots $\beta$
with $2\beta$ not a root, then it is a root system of type 
$C_n $ if $n\ge 3$, or $B_2$ if $n=2$,  or $A_1$ if $n=1$. 
So it suffices to prove the lemma for reduced   $\Phi $ which we assume now.

Recall that two roots $\beta$ and $\gamma$ are said to be  strongly
orthogonal if neither one of  $\beta \pm \gamma$ is a root and 
a subset $\mathcal O$ of $\Phi^+$ is called strongly orthogonal system if elements of $\mathcal O$
are pairwise strongly orthogonal. 
It follows from Oh \cite{oh98} that
if $\Phi$ is of  type 
 $B_n\ (n\ge 2), C_n \ (n\ge 3), D_n\  (n \mbox{ is even and } n\ge 4),  E_7, E_8, F_4, G_2, $ then
 there is a strongly orthogonal system $\mathcal O$  consisting of $n$ elements.
 In these cases
$\alpha$ is a linear combination
 of elements in $\mathcal O$ satisfying the conclusion  of the lemma. 

Now we assume $\Phi$
is of type $A_n, D_n$ or $ E_6$.
Let $\|\cdot\|$ be the induced norm on $E$.
 Let $\Pi=\{\alpha_1, \ldots, \alpha_n\}$ be simple roots 
 determined by  $\Phi^+$  and let $A$ be the associated  Cartan matrix.   We assume without loss of generality that 
 $\|\alpha_i\|=1$.
It follows from  Lusztig and Tits \cite{lt92} that $A^{-1}$ has positive rational entries. So  we have
\[
\alpha=a_1\alpha_1+\cdots+a_n\alpha_n
\]
where $a_i\in \mathbb R_{>0}$.

{\bf Case}   $\mathbf {A_n}$.
\[ 
\xy 
(-15,0)*{\cir<0pt>{}}; (0,0)*{\cir<4pt>{}}; **\dir{--};
(0,0)*{\cir<4pt>{}}; (15,0)*{\cir<4pt>{}}; **\dir{-};
(15,0)*{\cir<4pt>{}}; (45,0)*{\cir<4pt>{}}; **\dir{--};
(45,0)*{\cir<4pt>{}}; (60,0)*{\cir<4pt>{}}; **\dir{-};
(0,-6)*{\alpha_1};
(15, -6)*{\alpha_{2}};
(45, -6)*{\alpha_{k-1}};
(60, -6)*{\alpha_{k}};
\endxy 
\]
We assume that the simple roots $\Pi$ are indexed so that  $a_1\ge a_2$
and the corresponding Dynkin  diagram is as above. 
  Since $\alpha$
is dominated we have
\[
\langle \alpha, \alpha_2\rangle=a_2-\frac{1}{2}a_1-\frac{1}{2}a_3\ge 0
\]
which implies  $a_2\ge a_3$. The same argument using $\langle\alpha, \alpha_i\rangle\ge 0$
and $a_{i-1}\ge a_i$ for $2\le i<k$ inductively  imply  $a_i\ge a_{i+1}$ for
all $1\le i\le k-1$. 
Therefore
 $\Pi$ can be rearranged so that for all $1\le i<n$ one has   $a_i\ge a_{i+1}$  and $\alpha_{i+1}$ is connected to one
 of $\{\alpha_1, \ldots, \alpha_i\}$ in the Dynkin diagram.  
So  if we take  $\beta_i=\alpha_1+\cdots+\alpha_i\in \Phi ^+$ the conclusion of the lemma holds .

 {\bf Case $\mathbf {D_n}$ where $n\ge 4$ is odd}. 
\footnote{In this case the strongly orthogonal $\mathcal O$ constructed in \cite{oh98} contains 
$n-1$ elements (say $\beta_1, \ldots, \beta_{n-1}$) and the highest root (say $\beta_n$) is not in $\mathcal O$. 
These $n$ elements satisfy 
 $\beta_i+\beta_j\not \in \Phi$ but it is not clear to 
the author how to prove $c_i\ge 0$.}

\[ 
\xy 
(0,0)*{\cir<4pt>{}}; (15,0)*{\cir<4pt>{}}; **\dir{-};
(15,0)*{\cir<4pt>{}}; (45,0)*{\cir<4pt>{}}; **\dir{--};
(45,0)*{\cir<4pt>{}}; (60,0)*{\cir<4pt>{}}; **\dir{-};
(70,10)*{\cir<4pt>{}}; (60,0)*{\cir<4pt>{}}; **\dir{-};
(70,-10)*{\cir<4pt>{}}; (60,0)*{\cir<4pt>{}}; **\dir{-};
(0,-6)*{\alpha_1};
(15, -6)*{\alpha_{2}};
(45, -6)*{\alpha_{n-3}};
(60, -6)*{\alpha_{n-2}};
(80, -10)*{\alpha_{n-1}};
(80, 10)*{\alpha_n};
\endxy 
\]
We  assume that $\Pi$ is indexed so that the Dynkin diagram is as above
and $a_{n-1}\ge a_n$.
There is 
an explicit list of   $\Phi^+ $ and  $\Pi$ with $E=\mathbb R^n$  given in 
Knapp  \cite[Appendix C]{knapp} 
as follows:
$\alpha_i=\mathbf e_i- \mathbf e_{i+1}$ for $i<n$ and $\alpha_n=\mathbf e_{n-1}+\mathbf e_n$; 
$\Phi^+=\{ \mathbf  e_i\pm \mathbf  e_j: i<j\}$ where $\{\mathbf e_1, \ldots, \mathbf  e_n \}$ is
the standard basis of $\mathbb R^n$.

Since $\alpha $ is dominated we have 
 \[
2\langle \alpha, \alpha_1\rangle =2a_{1}-a_{2}\ge 0.
\]
Assume that $(i+1)a_i-ia_{i+1}\ge 0$ for $i\le n-4$. Then
\[
(i+2)a_{i+1}-(i+1)a_{i+2}=2(i+1)\langle \alpha, \alpha_{i+1}\rangle +(i+1)a_i-ia_{i+1}\ge 0.
\]
Therefore  
\begin{equation}\label{eq;mosquito}
(i+1)a_i-ia_{i+1}\ge 0 \quad \mbox{for } 1\le i\le n-3.
\end{equation}
 By calculating  inner products of $\alpha$ with $\alpha_{n-1}$ and $\alpha_n$ we have
 \begin{equation}\label{eq;coefficient}
 \left\{
\begin{array}{lcl}
2a_{n} & \ge & {a_{n-2}} \\
2a_{n-1} & \ge &{a_{n-2}} .
\end{array}
\right.
\end{equation}
It follows form (\ref{eq;coefficient})  and 
\begin{align}\label{eq;readable}
2\langle\alpha, \alpha_{n-2}\rangle=2a_{n-2}-a_{n-3}-a_{n-1}-a_n \ge 0 
\end{align}
that
$
a_{n-3}\le a_{n-2}.
$
Starting from this and using  $\langle\alpha, \alpha_{n-2-i}\rangle \ge 0$ inductively  for $1\le i\le n-4$ one gets $a_{n-2-i}\ge a_{n-3-i}$ {for }
$0\le i\le 4$. That is
\begin{equation}\label{eq;coefficient1}
a_i \le a_{i+1} \quad \mbox{for } 1\le i\le n-3.
\end{equation}

For $1\le i\le  (n-3)/2$  we take 
 \[\left\{
\begin{array}{lcl}
\beta_{2i-1} &=& \alpha_{2i-1}\\
\beta_{2i} &=& \alpha_{2i-1}+2(\alpha_{2i}+\cdots+\alpha_{n-2})+\alpha_{n-1}+\alpha_n.
\end{array}
\right.
\]
It follows from (\ref{eq;mosquito}), (\ref{eq;coefficient}) and (\ref{eq;coefficient1})  that there are nonnegative  integers $c_1, \ldots, c_{n-3}, b_{n-2}, b_{n-1}, b_n $ (e.g.~ $c_2=a_2/2$ and $c_1=a_1- c_2$)  such that \[
\alpha-c_1\beta_1-\cdots-c_{n-3}\beta_{n-3}=b_{n-2}\alpha_{n-2}+b_{n-1}\alpha_{n-1}+b_n\alpha_n,
\]
where 
\begin{align}
\label{eq;readable1}
b_{n-2}=a_{n-2}-a_{n-3}, 
b_{n-1}=a_{n-1}-\frac{a_{n-3}}{2}, b_n=a_n-\frac{a_{n-3}}{2}.
\end{align}
Note that $b_{n-1}\ge b_n$ since we assume $a_{n-1}\ge a_n$. 
Also, by (\ref{eq;readable}) and (\ref{eq;readable1})
\[
(b_{n-2}-b_{n-1})+(b_{n-2}-b_n)\ge 0.
\]
So $b_{n-2}\ge b_n $.

If $b_{n-2}\ge b_{n-1}\ge b_n$ we take
$\beta_{n-2} =  \alpha_{n-2}+\alpha_{n-1}+\alpha_n$,
 $\beta_{n-1}=\alpha_{n-2}+\alpha_{n-1}$ and $\beta_n=\alpha_{n-2}$.
 Then (\ref{eq;linearly}) holds with   $c_{n-2}=b_n ,c_{n-1}=b_{n-1}-b_n, c_n=b_{n-2}-b_{n-1}$.
The fact that 
$\beta_i+\beta_j\not \in \Phi^+$ follows from  the list of $\Phi^+$
and 
 \[
 \left\{
\begin{array}{lcll}
\beta_{2i-1} &=&  \mathbf e_{2i-1}- \mathbf e_{2i} &  \mbox{for }1\le i\le  (n-3)/2\\
\beta_{2i} &=&  \mathbf e_{2i-1}+ \mathbf e_{2i}               & \mbox{for }1\le i\le  (n-3)/2\\
\beta_{n-2}& = &  \mathbf e_{n-2}+ \mathbf e_{n-1} &  \\
\beta_{n-1}& = &  \mathbf e_{n-2}- \mathbf e_{n}  & \\
\beta_{n}& = &  \mathbf e_{n-2}- \mathbf e_{n-1}.  & 
\end{array}
\right.
\]

Similarly, if $b_{n-1}> b_{n-2}\ge b_n$, then 
$\beta_{n-2} =  \alpha_{n-2}+\alpha_{n-1}+\alpha_n$, $\beta_{n-1}=\alpha_{n-2}+\alpha_{n-1}$ and $\beta_n=\alpha_{n-1}$
works.

 {\bf  Case $\mathbf {E_6}$}.

\[ 
\xy 
(30,0)*{\cir<4pt>{}}; (45,0)*{\cir<4pt>{}}; **\dir{-};
(45,0)*{\cir<4pt>{}}; (60,0)*{\cir<4pt>{}}; **\dir{-};
(60,0)*{\cir<4pt>{}}; (75,0)*{\cir<4pt>{}}; **\dir{-};
(75,0)*{\cir<4pt>{}}; (90,0)*{\cir<4pt>{}}; **\dir{-};
(60,0)*{\cir<4pt>{}}; (60,15)*{\cir<4pt>{}}; **\dir{-};
(30, -6)*{\alpha_6};
(45, -6)*{\alpha_5};
(60, -6)*{\alpha_4};
(75, -6)*{\alpha_3};
(90, -6)*{\alpha_1};
(60,21)*{\alpha_2};
\endxy 
\]
We assume that $\Pi$ is indexed so that the Dynkin diagram is as above. 
 A simple calculation using  $\langle \alpha, \alpha_i\rangle \ge 0$ gives
\begin{equation}
\label{eq;finish}
\left\{
\begin{array}{rcl}
2a_1 &\ge  &a_3 \\
2a_6 & \ge & a_5  \\
3a_3 & \ge & 2a_4\\
3a_5 & \ge & 2a_4 \\
2a_4 & \ge & 3a_2.
\end{array}
\right.
\end{equation}
Let  $\beta_1 = \alpha_1+2\alpha_2+2\alpha_3+3\alpha_4+2\alpha_5+\alpha_6$ 
to be the highest root. 
Then it follows from (\ref{eq;finish}) that
there exist nonnegative numbers 
$b_1, b_3, b_4, b_5, b_6$
such that 
\[
\alpha-c_1\beta_1=b_1\alpha_1+b_3\alpha_3+b_4\alpha_4+b_5\alpha_5+b_6\alpha_6
\quad \mbox{where } c_1=\frac{a_2}{2}.
\]

Note that we don't need to worry about $\beta_1+\beta_j\in \Phi^+$ for any choice of $\beta_j$ since $\beta_1$ is the highest root. 
Next we take 
$\beta_2=\alpha_1+\alpha_3+\alpha_4+\alpha_5+\alpha_6$ and $c_2=b_i$ where  
 $b_i=\min\{b_1, b_3, b_4, b_5, b_6\}$.   Then
 $
 \alpha-c_1\beta_1-c_2\beta_2
 $
 is a positive  linear  combination of at most $4$ simple roots.

The next choice depends on $i$ but is simple due to the following observation:
if $\gamma=\sum_{j=1}^{6}l_j \alpha_j\in \Phi^+ $ and $l_j\ge 2$ for some $j$, then
$l_2>0$. For each  connected component $Y$ of $\Pi\setminus \{\alpha_2, \alpha_i \}$ in the Dynkin diagram,  we define 
$\beta_Y=\sum_{\gamma \in Y}\gamma \in \Phi^+$ and $c_Y$ to be the largest number $c$ such that $\alpha-c_1\beta_1-c_2\beta_i-c\beta_Y$ is a nonnegative linear combination 
of $\Pi$. Then 
\[
\alpha-c_1\beta_1-c_2\beta_i-\sum_Y c_Y\beta_Y. 
\]
is a positive linear combination of at most $3$ simple roots.

In the next step we 
  continue to define $\beta$ according to the connected components of 
  the above set of simple roots. 
  In this way we can find a set  $\{\beta_i :1\le i\le 6 \}\subset \Phi^+$ such that 
(\ref{eq;linearly}) holds for some $c_i\ge 0$. The property $\beta_i+\beta_j\not \in
\Phi^+$ follows from the observations above.  
\end{proof}

\begin{proof}[Proof of Lemma \ref{lem;abelian}]
Let $\mathfrak g^+\subset \mathfrak g$ be the Lie algebras of $G^+\le G$. There exists $z\in \mathfrak g$ such that $g_t =\exp tz$. 
Note that every semisimple subalgebra
$\mathfrak g_1$ of $\mathfrak g$ is the Lie algebra of a closed subgroup $G_1$.
So it suffices to find a semisimple subalgebra $\mathfrak g_1$ containing  $z $ and 
an abelian subalgebra $\mathfrak u\subset \mathfrak g_1$ with certain properties. 
Since the projection of $g_1$ to each simple factors of $G$ is not the 
identity element, we assume without loss of generality that $\mathfrak g$ is a 
simple noncompact Lie algebra.	
	
Let $\mathfrak a$ containing $z$ be a maximal $\mathbb R$ split Cartan  subalgebra of  $\mathfrak g$ 
contained in the normalizer of $\mathfrak g^+$. 
Let $\mathfrak a^*$ be the dual vector space of $\mathfrak a$ and let 
$\Phi=\Phi(\mathfrak g, \mathfrak a) \subset \mathfrak a^*$ be  the relative  root system.
We fix   a positive system    $\Phi ^+$ so that $\alpha(z)\ge 0$ for all  $\alpha\in \Phi ^+$.
Let $\mathfrak g_\alpha\subset \mathfrak g$ be the root space of $\alpha\in \Phi$.

Let  $\langle w, w'\rangle =-B(w, \theta w')$ be the usual inner product on $\mathfrak a$, where
$B$ is the Killing form  and $\theta$
is  the Cartan involution of $\mathfrak g$ with  $\mathfrak a$ belonging to the $-1$ eigenspace.  The inner product allows us to define an isomorphism between
$\mathfrak a$ and $\mathfrak a^*$ where
 every $w\in \mathfrak g$ is  sent to  $\alpha_w\in a^*$ such that 
$\alpha_w(w')=\langle w, w'\rangle $.

It follows from Lemma \ref{lem;root} that there are nonnegative real numbers  $c_1, \ldots, c_n$  
and  roots $\beta_1,\ldots,  \beta_n  \in \Phi^+$ $(n=\dim \mathfrak a)$  such that 
\[
\alpha_z=c_1\beta_1+\ldots+c_n\beta_n\quad \mbox{and }\quad
\beta_i+\beta_j\not\in \Phi \quad \forall i, j.
\] 
Let 
\begin{align}
\label{eq;v}
P=\{1\le i\le n: c_i>0  \}\quad \mbox{and}\quad 
z=\sum_{i\in P} c_i z_i
\end{align}
 where  $\alpha_{z_i}=\beta_i$.  

According to \cite[Proposition 6.52]{knapp}, 
for each $i\in P$ there exists $w_i\in \mathfrak g_{\beta_i}$   such that 
$
\mathfrak h_i:=\mathrm{span}_\R \{z_i, w_i, \theta(w_i)\}$  is isomorphic to 
  $\mathfrak {sl}_2$.  Let $\mathfrak g_1$ be the smallest  subalgebra containing 
  all the $\mathfrak h_i \ (i\in P)$ and let 
 $\mathfrak u=\mathrm{span}_{\R }\{w_i: i\in P \}$.  Since $\beta_i+\beta_j\not \in \Phi$ for all $i, j\in P$, one has 
  $\mathfrak u$ is an abelian subalgebra. In view of  $z_i\in \mathfrak g_1 \ (i\in P)$
  and (\ref{eq;v}), one has $z\in \mathfrak g_1$. 
  
  Now we prove  $\mathfrak g_1$ is semisimple. 
  Since $[\mathfrak h_i, \mathfrak h_i]=\mathfrak h_i$ for $i\in P$, one has $[\mathfrak g_1, \mathfrak g_1]$. 
   So by \cite[ \S I.6.2 Corollary 3]{gov},  $\mathfrak g_1$ is an algebraic subalgebra, i.e.~the Lie algebra of an algebraic 
   subgroup of $\GL(\mathfrak g)$ via the adjoint embedding of $\mathfrak g$.
    It is clear from the previous paragraph that $\theta(\mathfrak g_1)=\mathfrak g_1$
    since     $\theta(\mathfrak h_i)=
    \mathfrak h_i$ for all $i\in P$. 
    So  by \cite[\S VI.3 Theorem 3.6]{gov}, the Lie algebra $\mathfrak g_1$ is reductive. Hence  $\mathfrak g_1$
    is semisimple in view of  $[\mathfrak g_1,\mathfrak g_1]=\mathfrak g_1$. 

Let $G_1$ and $U_a$ be connected subgroups of $G$ with Lie algebras $\mathfrak g_1$
and $\mathfrak u$ respectively. It follows from definitions  that $U_a\le G^+$ is an abelian group.
Let $\rho : G_1\to \GL(V)$ be a nontrivial irreducible representation and let  $\rho: \mathfrak g_1\to \mathfrak{gl}(V)$ be  the induced representation of the  Lie algebra.  Note that 
\[
V^{U_a}=V^{\mathfrak u}:=\{v\in V: \rho(w)v=0\quad \mbox{for all }w \in \mathfrak u \}.  
\]
Let $v\in V^{\mathfrak u}$ be a simultaneous  eigenvector of $\rho(z_i)\ (i\in P)$ with eigenvalue $a_i$.
Since $\rho(w_i)v=0$, 
 the  representation theory  of $\mathfrak {sl}_2$ (see e.g.~\cite[\S I.9]{knapp}) 
implies that  $a_i\ge 0$ and equality holds if and only if $\rho(\mathfrak h_i)v=0$. 
Since $\rho$ is nontrivial, some  $a_i>0$.
Recall that   $z$ is a positive linear combination of $z_i$ by (\ref{eq;v}), so 
$\rho(z)v=av $  for some $a>0$.
Therefore 
 $V^{U_a}=V^{\mathfrak u}\subset V^+$. Hence $U_a\le G^+$ is a $g_1$ expanding abelian subgroup of $G_1$ 
by Lemma \ref{lem;characterize}.
\end{proof}

\end{document}